\begin{document}

\newtheorem{theorem}{Theorem}[section]
\newtheorem{corollary}[theorem]{Corollary}
\newtheorem{definition}[theorem]{Definition}
\newtheorem{conjecture}[theorem]{Conjecture}
\newtheorem{question}[theorem]{Question}
\newtheorem{lemma}[theorem]{Lemma}
\newtheorem{remark}[theorem]{Remark}
\newtheorem{proposition}[theorem]{Proposition}
\newtheorem{example}[theorem]{Example}
\newenvironment{proof}{\noindent {\bf
Proof.}}{\rule{3mm}{3mm}\par\medskip}
\newcommand{\pp}{{\it p.}}
\newcommand{\de}{\em}

\newcommand{\JEC}{{\it Europ. J. Combinatorics},  }
\newcommand{\JCTB}{{\it J. Combin. Theory Ser. B.}, }
\newcommand{\JCT}{{\it J. Combin. Theory}, }
\newcommand{\JGT}{{\it J. Graph Theory}, }
\newcommand{\ComHung}{{\it Combinatorica}, }
\newcommand{\DM}{{\it Discrete Math.}, }
\newcommand{\ARS}{{\it Ars Combin.}, }
\newcommand{\SIAMDM}{{\it SIAM J. Discrete Math.}, }
\newcommand{\SIAMADM}{{\it SIAM J. Algebraic Discrete Methods}, }
\newcommand{\SIAMC}{{\it SIAM J. Comput.}, }
\newcommand{\ConAMS}{{\it Contemp. Math. AMS}, }
\newcommand{\TransAMS}{{\it Trans. Amer. Math. Soc.}, }
\newcommand{\AnDM}{{\it Ann. Discrete Math.}, }
\newcommand{\NBS}{{\it J. Res. Nat. Bur. Standards} {\rm B}, }
\newcommand{\ConNum}{{\it Congr. Numer.}, }
\newcommand{\CJM}{{\it Canad. J. Math.}, }
\newcommand{\JLMS}{{\it J. London Math. Soc.}, }
\newcommand{\PLMS}{{\it Proc. London Math. Soc.}, }
\newcommand{\PAMS}{{\it Proc. Amer. Math. Soc.}, }
\newcommand{\JCMCC}{{\it J. Combin. Math. Combin. Comput.}, }
\newcommand{\GC}{{\it Graphs Combin.}, }

\title{  {Lower bounds  for  the $\mathcal{A}_{\alpha}$-spectral radius of  uniform hypergraphs 
 }\thanks{
 This work is supported by the National Natural Science Foundation of China (Nos. 11971311, 
 12026230). 
E-mail addresses: zpengli@sjtu.edu.cn (P.-L. Zhang),  xiaodong@sjtu.edu.cn ($^\dag$X.-D. Zhang, corresponding author). }}
\date{}

\author{
 Peng-Li Zhang, Xiao-Dong Zhang$^\dag$\\
{\small  School of Mathematical Sciences,  MOE-LSC, SHL-MAC, Shanghai Jiao Tong University,}\\ {\small  Shanghai 200240, PR China} \\
}

\maketitle

\vspace{-0.5cm}

\begin{abstract}
For $0\leq \alpha < 1$,  the  $\mathcal{A}_{\alpha}$-spectral radius of   a $k$-uniform hypergraph $G$ is defined to be the spectral radius of the tensor  $\mathcal{A}_{\alpha}(G):=\alpha \mathcal{D}(G)+(1-\alpha)  \mathcal{A}(G)$, where  $\mathcal{D}(G)$ and $A(G)$    are diagonal and the adjacency tensors of $G$ respectively.  This paper presents several lower bounds for the difference between the $\mathcal{A}_{\alpha}$-spectral radius and an average degree $\frac{km}{n}$ for a connected $k$-uniform hypergraph with $n$ vertices and $m$ edges,
which may be considered as the measures of irregularity of $G$.
Moreover,  two lower bounds on the $\mathcal{A}_{\alpha}$-spectral radius  are obtained in terms of the  maximum  and minimum degrees of a hypergraph.

\end{abstract}

{{\bf Keywords:}  uniform hypergraph, vertex degree, tensor, spectral radius  }

{AMS Classification: 05C50, 05C65}

\section{Introduction}

Let $G$ be a hypergraph on $n$ vertices with vertex set $V(G)$ and edge set $E(G).$ The elements of $V=V(G),$ labeled as $\{v_{1},\ldots,v_{n}\},$ are referred to as vertices and the elements of $E=E(G)$ are called edges.
If $|e|=k$ for each $e \in E(G),$ then $G$ is said to be a $k$-uniform hypergraph. For $k=2,$ it refers the ordinary graph.
For a vertex $v_{i}\in V(G),$ we denote $E_{v_{i}}(G)=\{e\in E(G)|v_{i}\in e \},$ which is the set of edges containing the vertex $v_{i}.$
The degree $d_{G}(v_{i})$ (or simply $d_{v_{i}}$) of a vertex $v_{i} \in V(G)$ is defined as  $d_{v_{i}}=|e_{j}:v_{i} \in e_{j}\in E(G)|.$
Denote the maximum degree, the minimum degree and the average degree of $G$ by
$\Delta(G),\delta(G)$ and $\overline{d}(G),$ respectively.
A hypergraph $G$ is $d$-{\it regular} if $\Delta(G)=\delta(G)=d,$ otherwise, $G$ is irregular.
A {\it complete} $k$-uniform hypergraph is defined to be a hypergraph
$G=(V(G),E(G))$ with the edge set consisting of all $k$-subsets of $V(G).$ Obviously, a complete $k$-uniform hypergraph on $n$ vertices is $\binom{n-1}{k-1}$-regular. Here, we denote an $n$-vertex $k$-uniform complete hypergraph  by $K_{n}^{k}.$
The
{\it complement } of a $k$-uniform hypergraph $G$ is
the $k$-uniform hypergraph $\overline{G}$ with the  same vertex set as $G$ and the edge set of which consists of $k$-subsets of $V(G)$ not in $E(G).$
Moreover, for different $i,j\in V(G),$ $i$ and $j$ are said to be {\it adjacent},
written  $i\sim j,$  if there is an edge of $G$  containing both $i$ and $j.$ Two edges are said to be {\it adjacent} if their intersection is not empty. A vertex $v$ is said to be {\it incident} to an edge $e$ if $v \in e.$

A walk $W$ of length $\ell$ in $G$ is a sequence of alternate vertices and edges:  $ v_{1}e_{1}v_{2}e_{2}\ldots v_{\ell}e_{\ell}v_{\ell+1},$ where $\{v_{i},v_{i+1}\}\subseteq e_{i}$ for $i=1,\ldots,\ell.$
A {\it walk} of $G$ is called a {\it path} if no vertices or no edges are repeated.
A hypergraph $G$ is said to be {\it connected} if every two vertices are connected by a path.
Moreover, since the trivial hypergraph (i.e., $E=\emptyset $) is of  less interest, we only consider  hypergraph having at least one edge (i.e., nontrivial) and  assume that $G$ is simple throughout this paper, which means that $e_{i}\neq e_{j}$ if $i\neq j.$

Now we give the definition of
a {\it strong independent set}~\cite{Balobanov Shabanov 2018} for a
hypergraph.

\begin{definition}[\cite{Balobanov Shabanov 2018}]\label{strong independent set}
{\rm A subset of vertices in a $k$-uniform hypergraph $G$ is called  {\it strong independent}
set if it intersects every edge of $G$ in at most one vertex.}
\end{definition}
It is easy to see that no two vertices of a strong independent set $S$ is adjacent in $G.$
We denote the maximum size of a strong independent set in $G$ as $\alpha_{s}(G).$ A strong independent set with cardinality $\alpha_{s}(G)$ is called a {\it maximum strong independent set.}
Also, a subset of vertices in a $k$-uniform hypergraph $G$ is called  {\it weak independent}
set~\cite{Balobanov Shabanov 2018} if it intersects every edge of $G$ in at most $k-1$ vertices, and
any $k$ vertices of a weak independent set $S$ is not an edge in $G.$
We denote the maximum size of a weak independent set in $G$ as $\alpha(G).$
A weak independent set with cardinality $\alpha(G)$ is called a {\it maximum weak independent set.}
Clearly, for ordinary graphs, i.e. in the case $k=2,$ these notions coincide.

Suppose $G=(V,E)$ is a hypergraph and $f:V \rightarrow \{1,2,\cdots, r\}$ is a vertex coloring with $r$ colors. Then $f$ is said to be {\it strong} for the hypergraph $G,$ if for every edge $e\in E,$ all the vertices in $e$ are colored with distinct colors, i.e.
$$|\{f(v):v\in e\}|=|e|,\qquad \quad \mbox{for all $e \in E$}.$$
Recall that the usual hypergraph chromatic number (i.e. weak chromatic number), $\chi(G),$ introduced by Erd\H{o}s corresponds to weak colorings, i.e. colorings without monochromatic edges when $|\{f(v):v\in e\}|\geq 2$ for  all  $e\in E.$ For ordinary graphs, these notions coincide.

The {\it clique} \cite{Xie Qi 2015} of a $k$-uniform hypergraph $G$ is a set of vertices such that any of its vertex subsets is an edge of $G.$
The largest cardinality of a clique of $G$ is called the {\it clique number} of $G,$ denoted by $\omega(G).$
A clique with cardinality $\omega(G)$ is called a {\it maximum clique.}

A {\it vertex cut}~\cite{Li Cooper Chang 2017} of $G$ is a vertex subset $S\subset V(G)$ such that $G- S$ is disconnected, where $G- S$ is the graph obtained by deleting all vertices in $S$ and all incident edges.
The {\it vertex connectivity} of $G,$ denoted by $\nu(G),$ which is the minimum cardinity of any vertices cut $S.$
A vertex cut  with cardinality $\nu(G)$ is called a {\it minimum vertex cut.}
The complete $k$-uniform hypergraph has no vertex cut. More notations  about hypergraphs  readers are referred to \cite{Berge 1973,Henning Yeo 2020}.

For positive integers $k$ and $n,$ a real {\it tensor} (also called {\it hypermatrix} ) $\mathcal{T}=(t_{i_{1} \ldots i_{k}})$ of order $k$ and dimension
$n$ refers to a multidimensional array with entries $t_{i_{1}\ldots i_{k}}$ such that
$$t_{i_{1}\ldots i_{k}}\in \mathbb{R}, \quad  \mbox {for all} \ i_{j}\in [n]=\{1,2,\ldots ,n\} \ \mbox{and} \ j\in [k].$$
The tensor $\mathcal{T}$ is called {\it symmetric} if  $t_{i_{1}\ldots i_{k}}$ is invariant under any permutation of its indices $i_{1},i_{2},\ldots ,i_{k}.$

A real symmetric tensor $\mathcal{T}$ of order $k$ dimension $n$ uniquely defines a $k$-th degree homogeneous polynomial function with real coefficient by
$$ F_{\mathcal{T}}(x)=\mathcal{T}x^{k}=\sum _{i_{1},\ldots ,i_{k}=1}^{n}t_{i_{1}\ldots i_{k}}x_{i_{1}}\ldots x_{i_{k}}. $$
It is easy to see that $\mathcal{T}x^{k}$ is a real number. Remember that $\mathcal{T}x^{k-1}$ is a vector in $\mathbb{R}^{n},$ whose  $i$-th component is defined as
\begin{equation}\label{TX}
(\mathcal{T}x^{k-1})_{i}=\sum _{i_{2},\ldots,i_{k}=1}^{n}t_{ii_{2}\ldots i_{k}}x_{i_{2}}\ldots x_{i_{k}}.
\end{equation}
\begin{definition}[\cite{Qi2005}]\label{H-eigenvector}
{\rm Let $\mathcal{T}$ be a $k$-th order $n$-dimensional real tensor and $\mathbb{C}$ be the set of all complex numbers.
Then $\lambda$ is an eigenvalue of $\mathcal{T}$ and $0\neq x \in \mathbb{C}^{n}$ is an eigenvector corresponding to $\lambda$
if $(\lambda,x)$ satisfies
$$\mathcal{T}x^{k-1}=\lambda x^{[k-1]},$$
where $x^{[k-1]}\in \mathbb{C}^{n}$ with $(x^{[k-1]})_{i}=(x_{i})^{k-1}.$ }
\end{definition}More information on eigenvalues and eigenvectors of tensors,
the readers are referred to the paper Qi \cite{Qi2005}. Moreover, it is easy to see that
$$(\mathcal{T}x^{k-1})_{i}=
\lambda x_{i}^{k-1}, \ \mbox{for} \ i=1,\ldots,n. $$

Now we introduce the general product \cite{Shao 2013} of tensors, which is a generalization of the matrix case.
\begin{definition}[\cite{Shao 2013}]\label{general tensor product}
{\rm Let $\mathcal{A}$ (and $\mathcal{B}$) be an order $m\geq 2$ (and order $k\geq 1$), dimension $n$ tensor, respectively. Define the product $\mathcal{A}\mathcal{B}$ to be the following tensor $\mathcal{C}$ of order $(m-1)(k-1)+1$ and dimension $n:$
$$c_{i\alpha_{1}\cdots\alpha_{m-1}}=\sum_{i_{2},\cdots, i_{m}=1}^{n}a_{ii_{2}\cdots i_{m}}b_{i_{2}\alpha_{1}}\cdots b_{i_{m}\alpha_{m-1}} \qquad (i\in [n],\alpha_{1},\cdots,\alpha_{m-1}\in [n]^{k-1}).$$}
\end{definition}
Note that by Definition~\ref{general tensor product}, now $\mathcal{T}x^{k-1}$ defined in \eqref{TX} can be
simply written as $\mathcal{T}x.$
The spectral radius of $\mathcal{T}$ is defined as
$\rho(\mathcal{T})=max\{ |\lambda| :\lambda \ \mbox{is an eigenvalue of} \ \mathcal{T} \}.$

For $k\geq 2,$ let $G=(V(G),E(G))$ be a $k$-uniform hypergraph on $n$ vertices.
The {\it adjacency tensor}~\cite{Cooper Dutle}  of $G$ is defined as the $k$-th order $n$-dimensional tensor $\mathcal{A}(G)=(a_{i_{1}\ldots i_{k}})$, where
$$a_{i_{1}\ldots i_{k}}=
\begin{cases}
\frac{1}{(k-1)!}, &  if \text{$ \{i_{1},\ldots,i_{k}\} \in E(G),$}\\
0, & \text{otherwise.}
\end{cases}$$
Let $\mathcal{D}$ be  a $k$-th order $n$-dimensional tensor with its diagonal element $d_{i\ldots i}$ being $d_{i},$ the degree of vertex $i,$ for all $i\in [n],$
then $\mathcal{L}=\mathcal{D}-\mathcal{A}$ is the  {\it Laplacian tensor} of the hypergraph $G,$ and $\mathcal{Q}=\mathcal{D}+\mathcal{A}$
is the {\it signless Laplacian tensor} of the hypergraph $G.$
It is easy to see that both  $\mathcal{A}$ and $\mathcal{Q}$ are  always nonnegative and  symmetric, and  $\mathcal{L}$  is symmetric.

Inspired by the innovating work of Nikiforov \cite{Nikiforov Merging2017}, Lin, Guo and Zhou \cite{Lin Guo Zhou in press} proposed corresponding notation of the convex linear combination $\mathcal{A}_{\alpha}(G)$ of $\mathcal{D}(G)$
and $\mathcal{A}(G),$  which is defined as
$$\mathcal{A}_{\alpha}(G)=\alpha \mathcal{D}(G)+(1-\alpha) \mathcal{A}(G),$$
where $0\leq \alpha <1.$

The spectral radius of $\mathcal{A}_{\alpha}(G)$ is called the {\it $\mathcal{A}_\alpha$-spectral radius} of $G$ and denoted by $\rho_{\alpha}(G).$  Then  $\rho_{0}(G)$ is the  spectral radius of $ \mathcal{A}(G),$  which is called the {\it adjacency spectral radius} of $G,$ and denoted by $\rho(\mathcal{A}(G)).$  Moreover, $2\rho_{1/2}(G)$ is the spectral radius of $\mathcal{Q}(G),$ which is called the {\it signless Laplacian spectral radius} of $G,$ and denoted by $\rho(\mathcal{Q}(G)).$ Also, $\rho(\mathcal{L}(G))$ is the spectral radius of $\mathcal{L}(G),$ which is called the {\it  Laplacian spectral radius} of $G.$

For $k \geq 2,$ let $G$ be a $k$-uniform hypergraph with $V (G) = [n],$ and $x$ be an $n$-dimensional column vector.  Clearly,
$$x^{T}(\mathcal{A}_{\alpha}(G)x)=\alpha\sum_{i\in V(G)}d_{i}x_{i}^{k}+  (1-\alpha)\sum_{e\in E(G)}kx^{e},$$
or equivalently,
$$x^{T}(\mathcal{A}_{\alpha}(G)x)=\sum_{e\in E(G)} \left(\alpha\sum_{i\in e}x_{i}^{k} + (1-\alpha)k x^{e} \right),$$
and $$(\mathcal{A}_{\alpha}(G)x)_{i}=\alpha d_{i}x_{i}^{k-1}+(1-\alpha)\sum_{e\in E_{i}(G)}x^{e \setminus \{i\}},$$
where $x^e=x_{i_{1}}x_{i_{2}}\cdots x_{i_{k}}$ for $e=\{i_{1},i_{2},\cdots,i_{k} \} \in E(G).$

Recently, many researchers focus on the difference between the spectral radius and the average degree of hypergraphs, which is considered as a measure of irregularity.
For $k=2,$ Collatz and Sinogowitz~\cite{Collatz Sinogowitz 1957}  stated that
for any graph with $n$ vertices and $m$ edges,
$\rho_{0}(G)\geq \frac{2m}{n},$
with equality if and only if the graph is regular.
Cioab$\check{a}$ and Gregory~\cite{Cioab Gregory 2007}~ showed that if $G$ is irregular and $n\geq 4,$ then $\rho_{0}(G)- \frac{2m}{n}> \frac{1}{n(\Delta+2)}.$
Ning, Li and Lu~\cite{Ning Li Lu 2013} proved that if $G$ is  irregular and $n\geq 3,$ then $2\rho_{1/2}(G)-\frac{4m}{n}> \frac{(\Delta-\delta)^2}{2n\Delta}.$
For general $k\geq 2,$ Si and Yuan~\cite{Si Yuan 2017} presented that for a $k$-uniform hypergraph $G,$
$\rho_{0}(G)-\frac{2m}{n}\geq \frac{k}{2n}\left(2^{\frac{1}{k}}\left(\Delta^{\frac{k}{k-1}}+\delta^{\frac{k}{k-1}}\right)-(\Delta+\delta) \right),$
which generalized and improved the above result of Cioab$\check{a}$ and Gregory~\cite{Cioab Gregory 2007}.
Besides, for a $k$-uniform hypergraph $G,$ there are many lower bounds on the spectral radius of $\mathcal{A}(G)$ and $\mathcal{Q}(G)$ in terms of  various parameters of hypergraphs, such as  degrees~\cite{Kang Liu Shan 2018, Liu Kang Bai 2019}, co-degrees~\cite{Liu Kang Bai 2019} and  number of edges~\cite{Liu Kang Shan 2018}.

Motivated by the results described above, we consider  $\mathcal{A}_{\alpha}$-spectral analogues for connected $k$-uniform hypergraph.  This paper is organized as follows: In Section 2, we state some basic notations  of tensors and auxiliary lemmas. In Section 3, using the direct product of  tensors with the same order, we present some results of  Laplacian eigenvalue and  $\mathcal{A_{\alpha}}$-spectral radius of the direct product of  hypergraphs, respectively.
In Section 4, we provide several lower bounds on $\mathcal{A}_{\alpha}$-spectral radius of $G$ in terms of vertex degrees.
And also,
we improve a result  in Kang, Liu and Shan~\cite[Theorem~3]{Kang Liu Shan 2018}.
Furthermore, we establish two lower bounds on the $\mathcal{A}_{\alpha}$-spectral radius  using  maximum degree and minimum degree.


\section{Preliminaries}

In this section, we review some notations and helpful lemmas.
For $x\in  \mathbb{R}^{n},$ denote $\|x\|_{k}^{k}=x_{1}^{k}+x_{2}^{k}+\ldots +x_{n}^{k}=\sum _{i=1}^{n}x_{i}^{k}.$ In particular, $x$ is said to be unit if $\|x\|_{k}^{k}=1.$
Denote the set of nonnegative (positive) real vectors of dimension $n$ by $\mathbb{R}_{+}^{n}(\mathbb{R}_{++}^{n}).$

The weak irreducibility of nonnegative tensors was defined in \cite{Friedland Gaubert Han}. It was proved that if $G$ is a connected $k$-uniform  hypergraph with $k\geq 2,$  then $\mathcal{A}_{\alpha}(G)$ is weakly irreducible (see  \cite{Guo Zhou in press}).
The following result is a part of Perron-Frobenius theorem for nonnegative tensors.
\begin{lemma} \label{Perron Frobenius}Let $\mathcal{T}$ be a nonnegative
tensor of order $k$ and dimension $n,$ then we have the following statements.

1. \cite{Yang Yang}   $\rho (\mathcal {T})$ is an eigenvalue of $\mathcal{T}$ with a nonnegative eigenvector corresponding to it.

2. \cite{Friedland Gaubert Han} If furthermore $\mathcal{T}$ is \emph{}symmetric and
weakly irreducible, then $\rho (\mathcal {T})$ is the unique eigenvalue of $\mathcal{T},$ with the unique eigenvector
$x \in \mathbb{R}_{++}^{n},$  up to a positive scaling coefficient.

\end{lemma}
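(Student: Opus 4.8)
The plan is to prove the two parts by separate arguments, since part~1 concerns an arbitrary nonnegative tensor while part~2 invokes the extra hypotheses of symmetry and weak irreducibility. For part~1 I would reduce to the strictly positive case by perturbation. Fix a strictly positive tensor $\mathcal{E}$ of the same order $k$ and dimension $n$ (for instance the tensor all of whose entries equal $1$) and set $\mathcal{T}_\varepsilon=\mathcal{T}+\varepsilon\mathcal{E}$ for $\varepsilon>0$. Each $\mathcal{T}_\varepsilon$ is strictly positive and hence irreducible, so the Perron--Frobenius theorem for irreducible nonnegative tensors supplies a unit eigenvector $x_\varepsilon\in\mathbb{R}_{++}^{n}$ with $\mathcal{T}_\varepsilon x_\varepsilon^{k-1}=\rho(\mathcal{T}_\varepsilon)x_\varepsilon^{[k-1]}$. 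Because the set $\{x\geq 0:\|x\|_k^k=1\}$ is compact, I would extract a convergent subsequence $x_{\varepsilon_j}\to x\geq 0$ and pass to the limit in the eigenequation, obtaining $\mathcal{T}x^{k-1}=\rho(\mathcal{T})x^{[k-1]}$; the identification $\lim_j\rho(\mathcal{T}_{\varepsilon_j})=\rho(\mathcal{T})$ would come from the monotonicity of the spectral radius in the entries of a nonnegative tensor together with its continuity, which forces $\rho(\mathcal{T}_\varepsilon)\downarrow\rho(\mathcal{T})$ as $\varepsilon\to 0$. This is essentially the route of Yang and Yang.

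For part~2, under symmetry and weak irreducibility, I would first upgrade the nonnegative eigenvector $x$ of part~1 to a strictly positive one. Weak irreducibility means that the digraph on $[n]$ with an arc $i\to j$ whenever some entry $t_{i i_2\cdots i_k}>0$ has $j\in\{i_2,\dots,i_k\}$ is strongly connected. If some coordinate $x_i$ vanished, the $i$-th equation $\rho(\mathcal{T})x_i^{k-1}=(\mathcal{T}x^{k-1})_i$ would force the out-neighbours of $i$ to vanish as well, and strong connectivity would then propagate this to give $x=0$, a contradiction; hence $x\in\mathbb{R}_{++}^{n}$. For uniqueness up to scaling I would take a second positive eigenvector $y$ for $\rho(\mathcal{T})$, put $t^\ast=\max_i (x_i/y_i)$, attained at an index $\ell$, and compare the $\ell$-th components of $\mathcal{T}x^{k-1}$ and $\mathcal{T}(t^\ast y)^{k-1}$. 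Since $t^\ast y\geq x$ coordinatewise, $\mathcal{T}$ is nonnegative, and $x_\ell=t^\ast y_\ell$, the chain $\rho(\mathcal{T})x_\ell^{k-1}=(\mathcal{T}x^{k-1})_\ell\leq(\mathcal{T}(t^\ast y)^{k-1})_\ell=\rho(\mathcal{T})(t^\ast y_\ell)^{k-1}=\rho(\mathcal{T})x_\ell^{k-1}$ is forced to be an equality, which (using $\mathcal{T}\geq 0$) requires $x_j=t^\ast y_j$ for every $j$ reachable from $\ell$; strong connectivity again extends this to all coordinates, yielding $x=t^\ast y$.

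The main obstacle is twofold: correctly identifying the eigenvalue produced by each limiting or extremal argument with $\rho(\mathcal{T})$ rather than merely with \emph{some} eigenvalue, and making the weak-irreducibility propagation fully rigorous, i.e. verifying that the support-closure step genuinely exhausts $[n]$, which is exactly where strong connectivity of the associated digraph enters. A cleaner alternative for the symmetric case would be the variational route $\rho(\mathcal{T})=\max\{\mathcal{T}x^k:x\geq 0,\ \|x\|_k^k=1\}$: existence of a maximizer is immediate by compactness, the first-order conditions give the eigenequation, and weak irreducibility combined with the contraction property of the log-transformed map $x\mapsto\log\mathcal{T}(\mathrm{e}^{x})$ in Hilbert's projective metric (the Birkhoff--Hopf mechanism underlying Friedland, Gaubert and Han) delivers both positivity and uniqueness at once. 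I would fall back on this approach if the ratio argument proved delicate to justify near the boundary of the nonnegative orthant.
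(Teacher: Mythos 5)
The first thing to note is that the paper does not prove this lemma at all: it is quoted directly from Yang--Yang and Friedland--Gaubert--Han as a known result, so there is no in-paper argument to compare yours against. Judged on its own terms, your sketch of part~1 is the standard perturbation route and is essentially sound; the one step to tighten is the identification of the limit with $\rho(\mathcal{T})$, which should not lean on ``continuity of the spectral radius'' (itself a nontrivial fact for tensors) but on the sandwich argument: monotonicity gives $\lim_j\rho(\mathcal{T}_{\varepsilon_j})\geq\rho(\mathcal{T})$, while the limiting eigenpair (with $\|x\|_k^k=1$, so $x\neq 0$) exhibits that limit as an eigenvalue of $\mathcal{T}$, hence at most $\rho(\mathcal{T})$.

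The genuine gap is in your positivity step in part~2. From $x_i=0$ and $(\mathcal{T}x^{k-1})_i=\rho(\mathcal{T})x_i^{k-1}=0$ you conclude that ``the out-neighbours of $i$ vanish as well,'' but the equation only forces each term $t_{ii_2\cdots i_k}x_{i_2}\cdots x_{i_k}$ to vanish, i.e.\ every tuple in the support of row $i$ contains \emph{at least one} index in the zero set $Z$ --- not that every out-neighbour of $i$ lies in $Z$. Concretely, with $k=3$ and $Z=\{3\}$, an entry such as $t_{313}>0$ is perfectly compatible with $(\mathcal{T}x^{2})_3=0$ and yet gives an arc $3\to 1$ leaving $Z$, so strong connectivity of the associated digraph produces no contradiction and the propagation stalls at the first step. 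The standard repair is different in kind: if $Z$ is nonempty and proper, the eigenequation restricted to $Z^{c}$ exhibits $\rho(\mathcal{T})$ as an eigenvalue with positive eigenvector of the principal subtensor on $Z^{c}$, and one then invokes \emph{strict} monotonicity of the spectral radius for weakly irreducible tensors (the spectral radius of that proper subtensor is strictly less than $\rho(\mathcal{T})$) to reach a contradiction --- or one runs the Collatz--Wielandt/Hilbert-metric machinery you mention only as a fallback. By contrast, your ratio argument for uniqueness is fine precisely because there both $x$ and $t^{\ast}y$ are strictly positive, so equality of the products does pin down every factor (and then every coordinate, by strong connectivity). Two smaller omissions: you never argue that $\rho(\mathcal{T})$ is the \emph{unique eigenvalue admitting} a positive eigenvector, which is the other half of part~2 and follows in one line from the Collatz--Wielandt formula; and the equality chain in the ratio argument needs $\rho(\mathcal{T})>0$, which weak irreducibility supplies.
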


By Lemma~\ref{Perron Frobenius}, for a symmetric weakly irreducible nonnegative tensor $\mathcal{A}_{\alpha}(G),$
$\rho_{\alpha}(G)$ is an eigenvalue of $\mathcal{A}_{\alpha}(G)$ corresponding to a nonnegative eignevector, which is called a {\it Perron vector} of $\mathcal{A}_{\alpha}(G).$ Furthermore, if $G$ is connected, then $\rho_{\alpha}(G)$ is the unique eigenvalue of $\mathcal{A}_{\alpha}(G)$ with the unique eigenvector $x\in \mathbb{R}_{++}^{n},$ up to a positive scaling coefficient.
Thus, if $G$ is a connected $k$-uniform hypergraph, then there is a unique unit positive {\it Perron vector} corresponding to $\rho_{\alpha}(G).$

\begin{lemma}[\cite{Qi2013}]\label{spectral radius}
Let $\mathcal{T}$ be a symmetric nonnegative tensor of order $k$ and dimension $n.$ Then
\begin{equation}\label{the expression of Perron Frobenius}
\rho(\mathcal{T})=max\mathbf{\{}x^{T}(\mathcal{T}x)|x\in \mathbb{R}_{+}^{n},\|x\|_{k}=1 \mathbf{\}}.
\end{equation}
Furthermore, $x \in \mathbb{R}_{+}^{n}$ with $\|x\|_{k}^{k}=1$ is an eigenvector of $\mathcal{T}$ corresponding to
$\rho(\mathcal{T})$ if and only if it is an optimal solution of the above maximization problem \eqref{the expression of Perron Frobenius}.
\end{lemma}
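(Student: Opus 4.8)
The plan is to treat the right-hand side as a constrained optimization problem and show that its optimal value coincides with $\rho(\mathcal{T})$. First I would note that the feasible set $\{x\in\mathbb{R}_{+}^{n}:\|x\|_{k}=1\}$ is compact and that $x\mapsto \mathcal{T}x^{k}=x^{T}(\mathcal{T}x)$ is continuous (it is a polynomial), so the maximum is attained at some $x^{*}\in\mathbb{R}_{+}^{n}$; write $\lambda^{*}=\mathcal{T}(x^{*})^{k}$ for the optimal value. Since $\mathcal{T}$ is nonnegative and $x^{*}\ge 0$, every monomial occurring in $\mathcal{T}(x^{*})^{k}$ is nonnegative, hence $\lambda^{*}\ge 0$.

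For the inequality $\lambda^{*}\ge\rho(\mathcal{T})$ I would invoke Lemma~\ref{Perron Frobenius}(1): $\rho(\mathcal{T})$ is an eigenvalue of $\mathcal{T}$ with a nonnegative eigenvector $y$. A nonzero nonnegative $y$ has $\|y\|_{k}>0$, so after scaling we may assume $\|y\|_{k}=1$; contracting the eigen-equation $\mathcal{T}y^{k-1}=\rho(\mathcal{T})\,y^{[k-1]}$ with $y$ gives $\mathcal{T}y^{k}=\rho(\mathcal{T})\sum_{i}y_{i}^{k}=\rho(\mathcal{T})$. As $y$ is feasible, $\lambda^{*}\ge \mathcal{T}y^{k}=\rho(\mathcal{T})$.

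The heart of the argument is the reverse inequality, which I would obtain by showing that the maximizer $x^{*}$ is itself an eigenvector with eigenvalue $\lambda^{*}$. Using the symmetry of $\mathcal{T}$, the gradient of the objective is $\nabla(\mathcal{T}x^{k})=k\,\mathcal{T}x^{k-1}$, while the constraint $g(x)=\sum_{i}x_{i}^{k}-1$ has gradient $\nabla g=k\,x^{[k-1]}$. Writing the first-order (Lagrange/KKT) conditions at $x^{*}$, for each coordinate $j$ with $x_{j}^{*}>0$ one gets $(\mathcal{T}(x^{*})^{k-1})_{j}=\mu\,(x_{j}^{*})^{k-1}$ for a common multiplier $\mu$, and contracting with $x^{*}$ identifies $\mu=\mathcal{T}(x^{*})^{k}=\lambda^{*}$. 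For each coordinate $j$ with $x_{j}^{*}=0$ the factor $(x_{j}^{*})^{k-1}$ vanishes, so both sides of the desired equation are zero once we know $(\mathcal{T}(x^{*})^{k-1})_{j}=0$; here the nonnegativity of $\mathcal{T}$ gives $(\mathcal{T}(x^{*})^{k-1})_{j}\ge 0$, while the sign of the inequality-constraint multiplier forces $(\mathcal{T}(x^{*})^{k-1})_{j}\le 0$, pinning this component to $0$. Hence $\mathcal{T}(x^{*})^{k-1}=\lambda^{*}(x^{*})^{[k-1]}$, so $\lambda^{*}$ is an eigenvalue and $\lambda^{*}=|\lambda^{*}|\le\rho(\mathcal{T})$. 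Combined with the previous paragraph, this yields $\lambda^{*}=\rho(\mathcal{T})$.

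Finally, for the characterization of optimal solutions, the computation above already shows that any maximizer is an eigenvector for $\rho(\mathcal{T})$; conversely, if $x\in\mathbb{R}_{+}^{n}$ with $\|x\|_{k}=1$ satisfies $\mathcal{T}x^{k-1}=\rho(\mathcal{T})\,x^{[k-1]}$, then contracting with $x$ gives $\mathcal{T}x^{k}=\rho(\mathcal{T})=\lambda^{*}$, so $x$ is optimal. The main obstacle I anticipate is the boundary case in which $x^{*}$ has zero coordinates, since there the constraint gradient $k\,x^{[k-1]}$ degenerates and the usual constraint qualification can fail. I expect to resolve this either by passing to the Fritz John conditions (which require no constraint qualification) and using the nonnegativity structure above to rule out a vanishing multiplier on the objective, or, alternatively, by perturbing $\mathcal{T}$ to a weakly irreducible tensor so that Lemma~\ref{Perron Frobenius}(2) furnishes a strictly positive maximizer, and then letting the perturbation tend to zero using continuity of $\rho$.
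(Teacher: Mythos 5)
The paper does not prove this lemma at all: it is quoted verbatim from Qi's 2013 paper (the citation \cite{Qi2013} is the entire ``proof''), so any comparison is between your argument and the standard proof in the literature rather than something in this manuscript. Your variational argument is essentially that standard proof and is correct: compactness gives a maximizer $x^{*}$ with value $\lambda^{*}\ge 0$; Lemma~\ref{Perron Frobenius}(1) plus the contraction $y^{T}(\mathcal{T}y)=\rho(\mathcal{T})\|y\|_{k}^{k}$ gives $\lambda^{*}\ge\rho(\mathcal{T})$; and the first-order conditions show $x^{*}$ is an eigenvector for $\lambda^{*}$, whence $\lambda^{*}\le\rho(\mathcal{T})$. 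Two small remarks. First, the obstacle you flag at the end is not actually there: at any feasible $x^{*}$ the gradient of the equality constraint, $k\,(x^{*})^{[k-1]}$, is nonzero and supported exactly on the coordinates where $x^{*}_{j}>0$, while the gradients $-e_{j}$ of the active inequality constraints are supported on the coordinates where $x^{*}_{j}=0$; these are linearly independent, so LICQ holds and the KKT conditions you use are legitimate without any recourse to Fritz John or perturbation. Second, your proposed perturbation fallback is the one piece I would not rely on: Lemma~\ref{Perron Frobenius}(2) asserts positivity of the eigenvector of the perturbed tensor, not of the maximizer of the perturbed optimization problem, and identifying the two is exactly the statement being proved, so that route is circular as phrased. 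Since the KKT route stands on its own, this does not affect the validity of the proof.
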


From Lemmas~\ref{Perron Frobenius} and \ref{spectral radius}, for a connected $k$-uniform hypergraph $G$ and a  vector $x\in \mathbb{R}_{+}^{n} $  satisfying $\|x\|_{k}^{k}=1,$ we have $\rho_{\alpha}(G)\geq
x^{T}(\mathcal{A}_{\alpha}(G)x) $ with equlaity if and only if $x$ is the unit positive {\it Perron vector} of $G$.

\begin{definition}[\cite{Shao 2013}]\label{direct product}
{\rm Let $\mathcal{A}$ and $\mathcal{B}$ be two order $k$ tensors with dimension $n$
and $m,$ respectively. Define the direct product $\mathcal{A}\otimes \mathcal{B}$
to be the following tensor of order $k$ and dimension $nm$ (the set of subscripts
is taken as $[n]\times [m]$ in the lexicographic order):

$$(\mathcal{A}\otimes \mathcal{B})_{(i_{1},j_{1})(i_{2},j_{2})\cdots(i_{k}j_{k})}=a_{i_{1}i_{2}\cdots i_{k}}b_{j_{1}j_{2}\cdots j_{k}}.$$ }

\end{definition}
In particular, let $u=(u_{1},u_{2},\cdots,u_{n})^{T}$ and $v=(v_{1},v_{2},\cdots,v_{m})^{T}$ be two column vectors with dimension $n$ and $m,$ respectively. Then
$$u \otimes v=(u_{1}v_{1},u_{2}v_{1},\cdots,u_{n}v_{1},u_{1}v_{2},u_{2}v_{2},\cdots,u_{n}v_{2},\cdots,
u_{1}v_{m},u_{2}v_{m},\cdots,u_{n}v_{m})^{T}.$$

From the above definition, it is easy to have the following proposition.
\begin{proposition}[\cite{Shao 2013}]\label{direct product law1}

$(1)(\mathcal{A}_{1}+\mathcal{A}_{2})\otimes \mathcal{B}=\mathcal{A}_{1}\otimes \mathcal{B}+\mathcal{A}_{2}\otimes \mathcal{B}.$

$(2)\mathcal{A}\otimes (\mathcal{B}_{1}+\mathcal{B}_{2})=\mathcal{A}\otimes \mathcal{B}_{1}+\mathcal{A}\otimes \mathcal{B}_{2}.$

$(3)(\lambda \mathcal{A})\otimes \mathcal{B}=\mathcal{A}\otimes (\lambda \mathcal{B})=\lambda(\mathcal{A}\otimes \mathcal{B}).(\lambda \in \mathbb{C}.)$

\end{proposition}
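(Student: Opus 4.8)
The plan is to prove all three identities by direct entrywise comparison, using only Definition~\ref{direct product} together with the distributive, commutative, and associative laws of the scalars (real, or complex in part~(3)). Since two tensors of the same order and dimension are equal precisely when all of their entries agree, it suffices to fix an arbitrary multi-index on $[n]\times[m]$, ordered lexicographically, and check that the corresponding entries on the two sides coincide.

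For part~(1) I would fix a multi-index $(i_{1},j_{1})(i_{2},j_{2})\cdots(i_{k},j_{k})$ and expand the left-hand side using Definition~\ref{direct product}:
\[
\bigl((\mathcal{A}_{1}+\mathcal{A}_{2})\otimes\mathcal{B}\bigr)_{(i_{1},j_{1})\cdots(i_{k},j_{k})}=\bigl((\mathcal{A}_{1})_{i_{1}\cdots i_{k}}+(\mathcal{A}_{2})_{i_{1}\cdots i_{k}}\bigr)\,b_{j_{1}\cdots j_{k}}.
\]
The scalar distributive law then splits this as $(\mathcal{A}_{1})_{i_{1}\cdots i_{k}}b_{j_{1}\cdots j_{k}}+(\mathcal{A}_{2})_{i_{1}\cdots i_{k}}b_{j_{1}\cdots j_{k}}$, which is exactly the corresponding entry of $\mathcal{A}_{1}\otimes\mathcal{B}+\mathcal{A}_{2}\otimes\mathcal{B}$. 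Part~(2) is handled symmetrically, distributing over the second factor $\mathcal{B}_{1}+\mathcal{B}_{2}$ in place of the first.

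For part~(3) I would again take a general entry and observe that
\[
\bigl((\lambda\mathcal{A})\otimes\mathcal{B}\bigr)_{(i_{1},j_{1})\cdots(i_{k},j_{k})}=(\lambda a_{i_{1}\cdots i_{k}})\,b_{j_{1}\cdots j_{k}},
\]
whereas $\mathcal{A}\otimes(\lambda\mathcal{B})$ contributes $a_{i_{1}\cdots i_{k}}(\lambda b_{j_{1}\cdots j_{k}})$ and $\lambda(\mathcal{A}\otimes\mathcal{B})$ contributes $\lambda(a_{i_{1}\cdots i_{k}}b_{j_{1}\cdots j_{k}})$. Commutativity and associativity of scalar multiplication make all three equal, and the argument is insensitive to $\lambda$ being complex.

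The computation is entirely routine, and there is no genuine obstacle here: the only point requiring care is the bookkeeping of the paired multi-index $(i_{\ell},j_{\ell})$ and the lexicographic ordering convention, so that the entries being compared on each side are indexed identically. Once that convention is fixed, each of the three identities collapses to a one-line statement of field arithmetic.
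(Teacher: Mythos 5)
Your proposal is correct and matches the paper's treatment: the paper offers no explicit proof, merely noting that the proposition follows immediately from Definition~\ref{direct product}, and your entrywise verification is precisely that routine check spelled out. Nothing is missing.
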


The following theorem presents an important relation between the direct product of tensors and the general product of tensors  in Definition~\ref{general tensor product}.

\begin{theorem}[\cite{Shao 2013}]\label{direct product law2}
Let $\mathcal{A}$ and $\mathcal{B}$ be two order $k+1$ tensors with dimension $n$ and $m,$ respectively. Let $\mathcal{C}$ and $\mathcal{D}$ be two order $k+1$ tensors with dimension $n$ and $m,$ respectively. Then we have:
$$(\mathcal{A}\otimes\mathcal{B})(\mathcal{C}\otimes\mathcal{D})=(\mathcal{A}\mathcal{C})
\otimes (\mathcal{B}\mathcal{D}).$$
\end{theorem}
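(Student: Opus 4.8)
The plan is to prove the identity by comparing entries directly, since both the direct product (Definition~\ref{direct product}) and the general product (Definition~\ref{general tensor product}) are given by explicit index formulas. First I would confirm that the two sides have the same shape. Because $\mathcal{A},\mathcal{C}$ both have order $k+1$ and dimension $n$, Definition~\ref{general tensor product} gives that $\mathcal{A}\mathcal{C}$ has order $((k+1)-1)((k+1)-1)+1=k^{2}+1$ and dimension $n$; similarly $\mathcal{B}\mathcal{D}$ has order $k^{2}+1$ and dimension $m$, so $(\mathcal{A}\mathcal{C})\otimes(\mathcal{B}\mathcal{D})$ has order $k^{2}+1$ and dimension $nm$. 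On the other hand, $\mathcal{A}\otimes\mathcal{B}$ and $\mathcal{C}\otimes\mathcal{D}$ have order $k+1$ and dimension $nm$, so their general product again has order $k^{2}+1$ and dimension $nm$. Thus both sides are tensors of order $k^{2}+1$ and dimension $nm$, and it remains only to show that the entries agree.

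Next I would index the dimension-$nm$ space by pairs $(i,j)\in[n]\times[m]$ and write a generic index of either side as $(i,j)\mathbf{A}_{1}\cdots\mathbf{A}_{k}$, where $(i,j)$ is a single pair and each $\mathbf{A}_{s}$ is a length-$k$ string of pairs, say $\mathbf{A}_{s}=((\alpha_{s,1},\beta_{s,1}),\dots,(\alpha_{s,k},\beta_{s,k}))$; set $\alpha_{s}=(\alpha_{s,1},\dots,\alpha_{s,k})\in[n]^{k}$ and $\beta_{s}=(\beta_{s,1},\dots,\beta_{s,k})\in[m]^{k}$. Applying Definition~\ref{general tensor product} to the pair $\mathcal{A}\otimes\mathcal{B}$ and $\mathcal{C}\otimes\mathcal{D}$, and then expanding each factor $(\mathcal{C}\otimes\mathcal{D})_{(i_{s+1},j_{s+1})\mathbf{A}_{s}}=c_{i_{s+1}\alpha_{s}}\,d_{j_{s+1}\beta_{s}}$ together with $(\mathcal{A}\otimes\mathcal{B})_{(i,j)(i_{2},j_{2})\cdots(i_{k+1},j_{k+1})}=a_{i i_{2}\cdots i_{k+1}}\,b_{j j_{2}\cdots j_{k+1}}$ via Definition~\ref{direct product}, the $((i,j)\mathbf{A}_{1}\cdots\mathbf{A}_{k})$-entry of the left-hand side becomes
$$\sum_{\substack{i_{2},\dots,i_{k+1}\in[n]\\ j_{2},\dots,j_{k+1}\in[m]}} a_{i i_{2}\cdots i_{k+1}}\,b_{j j_{2}\cdots j_{k+1}} \prod_{s=1}^{k} c_{i_{s+1}\alpha_{s}}\,d_{j_{s+1}\beta_{s}}.$$

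The decisive step is that this summand factors into a part depending only on the $[n]$-indices and a part depending only on the $[m]$-indices, so the sum over $([n]\times[m])$-tuples splits as a product of two independent sums:
$$\left(\sum_{i_{2},\dots,i_{k+1}} a_{i i_{2}\cdots i_{k+1}}\prod_{s=1}^{k} c_{i_{s+1}\alpha_{s}}\right)\left(\sum_{j_{2},\dots,j_{k+1}} b_{j j_{2}\cdots j_{k+1}}\prod_{s=1}^{k} d_{j_{s+1}\beta_{s}}\right).$$
By Definition~\ref{general tensor product} the first factor is exactly $(\mathcal{A}\mathcal{C})_{i\alpha_{1}\cdots\alpha_{k}}$ and the second is $(\mathcal{B}\mathcal{D})_{j\beta_{1}\cdots\beta_{k}}$. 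Reading off the right-hand side by Definition~\ref{direct product}, its $((i,j)\mathbf{A}_{1}\cdots\mathbf{A}_{k})$-entry equals $(\mathcal{A}\mathcal{C})_{i\alpha_{1}\cdots\alpha_{k}}(\mathcal{B}\mathcal{D})_{j\beta_{1}\cdots\beta_{k}}$, which matches, completing the argument.

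I expect the main obstacle to be purely notational: keeping the multi-index bookkeeping straight. One must check that when the full pair-index $(i,j)\mathbf{A}_{1}\cdots\mathbf{A}_{k}$ (which carries $1+k\cdot k=k^{2}+1$ pairs, consistent with the computed order) is split into its first and second coordinates, the first coordinates assemble precisely into the string $i\alpha_{1}\cdots\alpha_{k}$ and the second into $j\beta_{1}\cdots\beta_{k}$, so that the lexicographic pairing used in Definition~\ref{direct product} lines up with the grouping forced by the general product in Definition~\ref{general tensor product}. Once this correspondence is pinned down, the factorization of the summand and the recognition of the two resulting entries are routine.
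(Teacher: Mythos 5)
Your proposal is correct: the entrywise computation, the order/dimension count $k^{2}+1$, and the key factorization of the sum over $([n]\times[m])$-tuples into a product of an $[n]$-sum and an $[m]$-sum are all accurate, and the index bookkeeping you flag does line up as you describe. The paper itself offers no proof of this statement --- it is quoted from Shao's 2013 paper --- and your argument is essentially the standard one given there, so there is nothing to compare beyond noting that you have correctly reconstructed it.
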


In 2013, Shao~\cite{Shao 2013} defined the direct product of two hypergraphs.

\begin{definition}[\cite{Shao 2013}]\label{The direct product of  hypergraphs}(The direct product of  hypergraphs).
{\rm Let $G$ and $H$ be two  $k$-uniform hypergraphs. Define the direct product $G\times H$ of $G$
and $H$ as $V(G\times H)=V(G)\times V(H),$ and $\{(i_{1},j_{1}),\cdots,(i_{k},j_{k})\} \in E(G\times H)$ if and only if $\{i_{1},\cdots,i_{k}\}\in E(G)$ and $\{j_{1},\cdots,j_{k}\}\in E(H).$}
\end{definition}

\begin{lemma}[\cite{Li Cooper Chang 2017}]\label{square inequality}
Let $y_{1},y_{2},\cdots,y_{n}$ be nonnegative numbers ($n\geq 2$). Then
$$\frac{y_{1}+y_{2}+\cdots+y_{n}}{n}-(y_{1}y_{2}\cdots y_{n})^{\frac{1}{n}}\geq \frac{1}{n(n-1)}\sum_{1\leq i< j\leq n}(\sqrt{y_{i}}-\sqrt{y_{j}})^2,$$
equality holds if and only if $y_{1}=y_{2}=\cdots=y_{n}.$
\end{lemma}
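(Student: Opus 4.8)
The statement to prove is Lemma~\ref{square inequality}, an inequality comparing the arithmetic mean and geometric mean of $n$ nonnegative numbers, with an explicit lower bound on the gap expressed through pairwise differences of square roots.

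\medskip

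The plan is to make the substitution $z_i = \sqrt{y_i}$, so that $y_i = z_i^2$ with $z_i \geq 0$. The arithmetic mean becomes $\frac{1}{n}\sum_{i=1}^n z_i^2$, the geometric mean becomes $(z_1^2 z_2^2 \cdots z_n^2)^{1/n} = (z_1 z_2 \cdots z_n)^{2/n}$, and the right-hand side becomes $\frac{1}{n(n-1)}\sum_{1\leq i<j\leq n}(z_i - z_j)^2$. I would first expand the pairwise-difference sum using the standard identity $\sum_{1\leq i<j\leq n}(z_i-z_j)^2 = n\sum_{i=1}^n z_i^2 - \left(\sum_{i=1}^n z_i\right)^2 = (n-1)\sum_i z_i^2 - 2\sum_{i<j} z_i z_j$. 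Dividing by $n(n-1)$, the claim reduces to showing
$$\frac{1}{n}\sum_{i=1}^n z_i^2 - (z_1 \cdots z_n)^{2/n} \;\geq\; \frac{1}{n}\sum_{i=1}^n z_i^2 - \frac{1}{n(n-1)}\left(\sum_{i=1}^n z_i\right)^2 + \frac{1}{n}\sum_{i=1}^n z_i^2,$$
which, after cancellation, is equivalent to a comparison between $(z_1\cdots z_n)^{2/n}$ and a combination of $\left(\sum z_i\right)^2$ and $\sum z_i^2$. I would rearrange to the cleaner target inequality
$$\frac{1}{n(n-1)}\left(\sum_{i=1}^n z_i\right)^2 - \frac{1}{n-1}\sum_{i=1}^n z_i^2 \;\geq\; (z_1 z_2 \cdots z_n)^{2/n},$$
and here I should double-check the exact bookkeeping, since getting the coefficients right is where a sign error would creep in.

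\medskip

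Once reduced to a clean symmetric inequality in the $z_i$, the main tool is the classical AM--GM inequality applied to the $n$ numbers $z_1^2, \ldots, z_n^2$, combined with a sharpening. One natural route: apply AM--GM not to the $z_i^2$ directly but to exploit that $\left(\sum_i z_i\right)^2 = \sum_i z_i^2 + 2\sum_{i<j} z_i z_j$, and then bound $\sum_{i<j} z_i z_j$ from below using AM--GM on each product $z_i z_j \geq$ (something involving the full geometric mean). A cleaner alternative that I expect to work is to prove the inequality by induction on $n$, using the case $n=2$ (where it reduces to the trivial identity $\frac{y_1+y_2}{2} - \sqrt{y_1 y_2} = \frac12(\sqrt{y_1}-\sqrt{y_2})^2$, so equality in the $n=2$ form and the lemma is exact) as the base, and then peeling off one variable. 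The equality condition $y_1 = \cdots = y_n$ should fall out of the equality condition in AM--GM, since equality in AM--GM for $z_1^2,\dots,z_n^2$ forces all $z_i$ equal.

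\medskip

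The hardest part will be establishing the refined AM--GM step, i.e.\ quantifying exactly how much the arithmetic mean exceeds the geometric mean in terms of the pairwise squared differences, rather than just asserting the gap is nonnegative. The subtlety is that the coefficient $\frac{1}{n(n-1)}$ must match the combinatorial count of pairs, and a crude AM--GM only gives a nonnegative gap without the sharp constant. I anticipate the successful strategy is the induction on $n$: assuming the bound for $n-1$ numbers and adding the $n$-th, the inductive step requires controlling the new cross terms $\sum_{j<n}(\sqrt{y_j}-\sqrt{y_n})^2$ against the change in both the arithmetic and geometric means, which in turn uses the $n=2$ identity as a building block. Verifying that the constant survives the inductive passage cleanly, and that equality propagates to exactly the all-equal case, is the delicate bookkeeping I would reserve most care for.
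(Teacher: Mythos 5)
The paper does not prove this lemma at all --- it imports it from \cite{Li Cooper Chang 2017} --- so your attempt has to stand on its own. Your opening moves are the right ones: substitute $z_i=\sqrt{y_i}$ and use $\sum_{i<j}(z_i-z_j)^2=(n-1)\sum_i z_i^2-2\sum_{i<j}z_iz_j$. But the reduction you then write down is mis-executed, and the error is not cosmetic: the displayed intermediate inequality does not follow from that identity, and your ``cleaner target''
$$\frac{1}{n(n-1)}\Bigl(\sum_{i=1}^n z_i\Bigr)^2-\frac{1}{n-1}\sum_{i=1}^n z_i^2\ \geq\ (z_1\cdots z_n)^{2/n}$$
is simply false (take all $z_i=1$: the left side is $0$, the right side is $1$). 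The correct bookkeeping gives the identity
$$\frac{1}{n}\sum_{i=1}^n z_i^2-(z_1\cdots z_n)^{2/n}-\frac{1}{n(n-1)}\sum_{i<j}(z_i-z_j)^2\ =\ \frac{2}{n(n-1)}\sum_{i<j}z_iz_j-(z_1\cdots z_n)^{2/n},$$
after which the lemma is exactly AM--GM applied to the $\binom{n}{2}$ numbers $z_iz_j$: each $z_i$ occurs in $n-1$ of these products, so $\prod_{i<j}z_iz_j=\prod_i z_i^{\,n-1}$ and their geometric mean is $(\prod_i z_i)^{2/n}$. This is the one idea your write-up circles but never lands on; ``AM--GM on each product $z_iz_j\geq$ something involving the full geometric mean'' is not a proof, and the induction alternative you fall back on is left entirely undeveloped and is strictly harder than this one-line finish.

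A further caveat on the equality case: equality in the AM--GM above forces all the products $z_iz_j$ to be equal, which yields $y_1=\cdots=y_n$ when all $y_i>0$; but for $n\geq 3$ equality also occurs when all but one $y_i$ vanish (for $n=3$ and $y=(1,0,0)$ both sides equal $1/3$), so the ``if and only if'' as stated silently assumes positivity. Your assertion that the equality condition ``falls out of AM--GM'' therefore needs that hypothesis made explicit.
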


\begin{lemma}\label{Jensen's inequality}(Jensen's inequality).
Let $y_{1},y_{2},\cdots,y_{n}$ be real numbers ($n\geq 2$). Then
$$\frac{y_{1}^{n}+y_{2}^{n}+\cdots+y_{n}^{n}}{n}\geq \left(\frac{y_{1}+y_{2}+\cdots+y_{n}}{n}\right)^{n},$$
equality holds if and only if $y_{1}=y_{2}=\cdots=y_{n}.$
\end{lemma}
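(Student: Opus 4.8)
The plan is to read the claimed inequality as the instance of Jensen's inequality attached to the power function $f(t)=t^{n}$, and to give a short self-contained derivation through the supporting-line (tangent-line) description of convexity, since that route also hands us the equality characterization for free. First I would introduce the mean $\bar{y}=\frac{1}{n}\sum_{i=1}^{n}y_{i}$ and record that $f(t)=t^{n}$ has $f''(t)=n(n-1)t^{n-2}$, so that $f$ is convex on $[0,\infty)$ and strictly convex on $(0,\infty)$. Because every place this lemma is invoked later involves nonnegative quantities (entries of Perron vectors, degrees, and the like), I would work on $t\ge 0$; for even $n$ the same computation covers all of $\mathbb{R}$.

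The core step is the supporting-line estimate for a convex $f$ at the point $\bar{y}$,
$$f(t)\ \ge\ f(\bar{y})+f'(\bar{y})\,(t-\bar{y})\qquad \mbox{for all admissible } t,$$
with strict inequality whenever $t\neq \bar{y}$ by strict convexity. Substituting $t=y_{i}$ and summing over $i$ gives
$$\sum_{i=1}^{n}y_{i}^{n}\ \ge\ n\,f(\bar{y})+f'(\bar{y})\sum_{i=1}^{n}(y_{i}-\bar{y}).$$
The decisive cancellation is $\sum_{i=1}^{n}(y_{i}-\bar{y})=\bigl(\sum_{i}y_{i}\bigr)-n\bar{y}=0$, so the second term vanishes and, dividing by $n$, I obtain $\frac{1}{n}\sum_{i=1}^{n}y_{i}^{n}\ge f(\bar{y})=\bar{y}^{\,n}=\bigl(\frac{1}{n}\sum_{i}y_{i}\bigr)^{n}$, which is exactly the assertion. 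For the equality case I would note that each supporting-line inequality is strict unless $y_{i}=\bar{y}$; hence equality in the summed bound forces $y_{i}=\bar{y}$ for every $i$, i.e.\ $y_{1}=\cdots=y_{n}$, and the converse is immediate.

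The only real subtlety — and the point I would flag rather than hide — is the domain. As literally phrased for arbitrary real $y_{i}$ the statement is false for odd $n$, since $t\mapsto t^{n}$ is concave for $t<0$ and convex for $t>0$, so no single supporting line dominates it across $0$. Thus the honest reading is that the inequality holds on $t\ge 0$ (which is what all subsequent applications use) or for even $n$ on all of $\mathbb{R}$; I would state the nonnegativity restriction explicitly instead of appealing to convexity of $t^{n}$ over the whole real line. Everything else is routine, so I do not expect any genuine obstacle beyond pinning down this hypothesis and the strict-convexity bookkeeping for the equality clause.
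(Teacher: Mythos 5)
Your proof is correct, and it is worth noting that the paper itself offers no proof of this lemma at all: it is stated as a standard fact (labelled ``Jensen's inequality'') and invoked later in the proof of Lemma~\ref{k square lemma}, so there is no argument in the paper to compare yours against. Your supporting-line derivation is a clean and standard way to supply the missing proof, and the cancellation $\sum_{i}(y_{i}-\bar{y})=0$ together with strict convexity correctly yields both the inequality and the equality characterization. Your flag about the domain is also a genuine and correct criticism of the statement as printed: for odd $n$ and arbitrary real $y_{i}$ the inequality fails (e.g.\ $n=3$, $(y_{1},y_{2},y_{3})=(-2,0,0)$ gives $-\tfrac{8}{3}<-\tfrac{8}{27}$), so the hypothesis should read ``nonnegative real numbers.'' This does not damage the paper, since in the one place the lemma is applied the $x_{i}$ are entries of a nonnegative vector, but the restriction you propose is the honest form of the statement.
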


\section{Some spectral properties of direct product of hypergraphs}

In this section, we  consider some spectral  properties for  Laplacian eigenvalue and  $\mathcal{A_{\alpha}}$-spectral radius of the direct product of  hypergraphs.

According to Definition~\ref{The direct product of  hypergraphs}, for two $k$-uniform hypergraphs $G$ and $H,$  Kang, Liu and Shan \cite[Claim 1]{Kang Liu Shan 2018} proved that when $H=K_{k}^{k},$ (where $K_{k}^{k}$ is the $k$-uniform hypergraph of order $k$ consisting of a single edge), if $G$ is connected, then $\widetilde{G}=G\times K_{k}^{k}$ is connected. Now we generalize this result to the case when $H$ is a connected $k$-uniform hypergraph.

\begin{lemma}\label{A connected}
Let $G$ and $H$ be two $k$-uniform hypergraphs ($k\geq 3$). If both $G$ and $H$ are connected, then $\widetilde{G}:=G\times H$ is connected.
\end{lemma}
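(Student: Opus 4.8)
The plan is to show that any two vertices $(i_1,j_1)$ and $(i_2,j_2)$ of $\widetilde{G}=G\times H$ are joined by a walk. The key structural fact from Definition~\ref{The direct product of  hypergraphs} is that an edge of $G\times H$ lies over a pair of edges, one in $G$ and one in $H$: a set $\{(i_1,j_1),\dots,(i_k,j_k)\}$ is an edge of $\widetilde{G}$ exactly when $\{i_1,\dots,i_k\}\in E(G)$ and $\{j_1,\dots,j_k\}\in E(H)$. So I would first reduce the problem to two separate ``coordinate moves'': I want to show that from $(i,j)$ I can reach $(i',j)$ whenever $i\sim i'$ in $G$ via a single edge, and similarly $(i,j)$ to $(i,j')$ when $j\sim j'$ in $H$, both without disturbing the other coordinate. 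Once these elementary moves are available, connectedness of $G$ and of $H$ lets me route $i_1\to i_2$ in $G$ and $j_1\to j_2$ in $H$ and concatenate the lifted walks, yielding a walk from $(i_1,j_1)$ to $(i_2,j_2)$ in $\widetilde{G}$.

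The heart of the argument, and the step I expect to be the main obstacle, is constructing a single edge of $\widetilde{G}$ that changes only one coordinate. Suppose $e=\{i_1,\dots,i_k\}\in E(G)$ and $f=\{j_1,\dots,j_k\}\in E(H)$. To move within $G$ while keeping the $H$-coordinate fixed at some value $j$, I need an edge of $\widetilde{G}$ all of whose vertices share the second coordinate $j$; but an edge of $\widetilde{G}$ projects to a \emph{genuine} edge of $H$, which has $k$ distinct vertices, so I cannot literally hold the second coordinate constant across an edge. This is exactly where the hypothesis $k\ge 3$ and the product structure must be exploited: the $k$ index-slots of a product edge can carry \emph{repeated} coordinates as long as the multiset underlying each projection is still the right edge. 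Concretely, I would take an edge $f=\{j_1,\dots,j_k\}$ of $H$ containing $j$ and build a product edge by assigning first coordinates along $e$ and second coordinates along $f$ in a chosen matching of slots, so that two of the resulting vertices agree in the coordinate I am trying to preserve. The fact that $k\ge 3$ guarantees there are enough slots to fix one vertex in place while the edge still meets both an $e$-edge and an $f$-edge in its two projections, so that traversing the product edge realizes a one-coordinate step.

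More precisely, the plan is: fix $j\in V(H)$ and pick any edge $f\in E(H)$ with $j\in f$, say $f=\{j=\ell_1,\ell_2,\dots,\ell_k\}$; fix $i\in V(G)$ and an edge $e\in E(G)$ with $i\in e$, say $e=\{i=h_1,h_2,\dots,h_k\}$. I then form the vertex set
\[
\bigl\{(h_1,\ell_1),(h_2,\ell_2),\dots,(h_k,\ell_k)\bigr\}\subseteq V(\widetilde{G}),
\]
whose first coordinates are exactly $e$ and second coordinates exactly $f$, hence it is an edge of $\widetilde{G}$. Using several such edges with different slot-assignments (permuting which $h$-index is paired with $j$), I can walk from $(h_1,j)=(i,j)$ to $(h_2,j),(h_3,j),\dots$, i.e.\ realize every $G$-neighbor of $i$ inside the fixed fiber over $j$, and symmetrically for $H$. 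The verification that these product edges are legitimate is immediate from the definition, and the genuine content is organizing the slot-matchings so that consecutive edges overlap in a common vertex to form a walk; here $k\ge 3$ provides the extra slot needed for the overlap. Finally I would assemble: lift a $G$-walk from $i_1$ to $i_2$ (all over fiber $j_1$) and an $H$-walk from $j_1$ to $j_2$ (all over fiber $i_2$), and concatenate at the common vertex $(i_2,j_1)$, completing a walk from $(i_1,j_1)$ to $(i_2,j_2)$ and proving $\widetilde{G}$ connected.
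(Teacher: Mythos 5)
Your proposal is correct, but it is organized differently from the paper's proof, and the comparison is worth making. The paper fixes a path $i=i_1e_1\cdots i_{p+1}=j$ in $G$ and walks through $\widetilde{G}$ by \emph{alternating} the second coordinate between $s$ and a spare vertex $s'$ (available since $k\ge 3$), which forces a case analysis on the parity of $p$ and on whether $s$ and $t$ lie in a common edge of $H$ (Cases 1 and 2, each with subcases). You instead decompose the problem into fiber-preserving elementary moves: within a single product edge built from $e\in E(G)$ and $f\in E(H)$ with $j\in f$, you hop from $(h_1,j)$ to $(h_2,j)$ via a two-edge detour through a shared vertex $(h_3,\ell_3)$ of two slot-permuted copies of the product edge, then concatenate a lifted $G$-walk over the fiber of $j_1$ with a lifted $H$-walk over the fiber of $i_2$, meeting at $(i_2,j_1)$. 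This costs two product edges per step of the base walk but eliminates the paper's parity and adjacency case distinctions entirely; both arguments use $k\ge 3$ in the same essential way (a spare slot/vertex to detour through). One caveat: your second paragraph briefly suggests building a \emph{single} product edge in which ``two of the resulting vertices agree in the coordinate I am trying to preserve''---that is impossible, since each projection of a product edge must be a genuine $k$-element edge of $G$ or of $H$ and hence has no repeated coordinates. Your concrete construction in the final paragraph correctly abandons this and uses two overlapping edges instead, so the argument as ultimately stated goes through.
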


\begin{proof}
It suffices to show that for any $(i,s),(j,t)\in V(\widetilde{G}),$ $i,j\in V(G),s,t\in V(H),$ there exists a walk connecting them. We distinguish the following two cases:
\\
$\mathbf{Case~1}.$  There exists an edge $e'\in E(H)$ containing both $s$ and $t$ in $H.$ 

The proof is the same as the proof of  Kang, Liu, and Shan \cite[Claim 1]{Kang Liu Shan 2018}. For the sake of  completeness, we write it down.

$\mathbf{Case~1.1}.$ $i\neq j, s\neq t.$

Since $G$  is connected, there exists a path $P: i=i_{1}e_{1}i_{2}e_{2}\cdots i_{p}e_{p}i_{p+1}=j.$ Since $k\geq 3,$ there exists $s'$ such that
$s'\neq s,s'\neq t.$ From the definition of $\widetilde{G},$ we have the following:

(a) If $p$ is odd, we have

\begin{align*}
\begin{split}
 \left \{
\begin{array}{ll}
    (i_{h},s)\sim (i_{h+1},s'),                    & h=1,3,\cdots,p-2,\\
    (i_{l},s')\sim (i_{l+1},s),                   & l=2,4,\cdots,p-1,\\
    (i_{p},s)\sim (i_{p+1},t)=(j,t).                &
\end{array}
\right.
\end{split}
\end{align*}

(b) If $p$ is even, we have

\begin{align*}
\begin{split}
 \left \{
\begin{array}{ll}
    (i_{h},s)\sim (i_{h+1},s'),                    & h=1,3,\cdots,p-1,\\
    (i_{l},s')\sim (i_{l+1},s),                   & l=2,4,\cdots,p-2,\\
    (i_{p},s')\sim (i_{p+1},t)=(j,t).                &
\end{array}
\right.
\end{split}
\end{align*}
Hence, there exists a walk connecting $(i,s)$ and $(j,t).$

$\mathbf{Case~1.2}.$  $i= j, s\neq t.$

Since $k\geq 3,$ there exist $i'$ and $s'$ satisfying that $i'\neq i,s'\neq s, s'\neq t.$ According to Case~1.1, we know that there exists a path connecting $(i,s)$ and $(i',s').$ Note that $i'\neq j$ and $s'\neq t,$ there is a path connecting $(i',s')$ and $(j,t)$ by Case~1.1. So there exists a walk connecting $(i,s)$ and $(j,t),$ as desired.
\\
$\mathbf{Case~2}.$  There does not exist  an edge containing both $s$ and $t$ in $H.$ 

$\mathbf{Case~2.1}.$  $i\neq j,s\neq t.$

Since $G, H$ are both connected, there exists a path $P:i=i_{1}e_{1}i_{2}e_{2}\cdots i_{p}e_{p}i_{p+1}=j$ connecting $i$ and $j,$ and a path $Q:s=s_{1}f_{1}s_{2}f_{2}\cdots s_{q}f_{q}s_{q+1}=t$ connecting $s$ and $t.$
We distinguish the following two cases:

(a) If $p=q,$ then
$(i,s)=(i_{1},s_{1})\sim (i_{2},s_{2})\sim \cdots \sim(i_{p},s_{p})\sim(i_{p+1},s_{p+1})=(j,t),$ thus there exists a walk connecting $(i,s)$ and $(j,t),$ as desired.

(b) If $p\neq q.$
Without loss of generality, we assume $p>q.$
Since $k\geq 3,$ there exists a vertex $s'\in f_{q}, s'\neq s_{q}, s'\neq t.$
Thus there exists a walk $(i,s)=(i_{1},s_{1})\sim (i_{2},s_{2})\sim \cdots \sim (i_{q},s_{q})\sim (i_{q+1},s')$ connecting $(i,s)$ and $(i_{q+1},s').$

If $p-q$ is odd, we have
\begin{align*}
\begin{split}
 \left \{
\begin{array}{ll}
    (i_{h},s')\sim (i_{h+1},s_{q}),                    & h=q+1,q+3,\cdots,p-2,\\
    (i_{l},s_{q})\sim (i_{l+1},s'),                   & l=q+2,q+4,\cdots,p-1,\\
    (i_{p},s')\sim (i_{p+1},s_{q+1})=(j,t).                &
\end{array}
\right.
\end{split}
\end{align*}
Thus there exists a walk connecting $(i_{q+1},s')$ and $(j,t).$ Hence, there is a walk connecting $(i,s)$ and $(j,t),$ as desired.

If $p-q$ is even, we have
\begin{align*}
\begin{split}
 \left \{
\begin{array}{ll}
    (i_{h},s')\sim (i_{h+1},s_{q}),                    & h=q+1,q+3,\cdots,p-1,\\
    (i_{l},s_{q})\sim (i_{l+1},s'),                   & l=q+2,q+4,\cdots,p-2,\\
    (i_{p},s_{q})\sim (i_{p+1},s_{q+1})=(j,t).                &
\end{array}
\right.
\end{split}
\end{align*}
Thus there exists a walk connecting $(i_{q+1},s')$ and $(j,t).$ Hence, there is a walk connecting $(i,s)$ and $(j,t),$ as desired.

$\mathbf{Case~2.2}.$ $i= j, s\neq t.$

Since $k\geq 3,$ there exist $i'$ and $s'$ satisfying that $i'\neq i,s'\neq s, s'\neq t.$ According to Case~2.1, we know that there exists a path connecting $(i,s)$ and $(i',s').$ Note that $i'\neq j$ and $s'\neq t,$ there is a path connecting $(i',s')$ and $(j,t)$ by Case~2.1. So there exists a walk connecting $(i,s)$ and $(j,t),$ as desired.
\end{proof}

Using the direct product $\widetilde{G}$ of two  connected $k$-uniform hypergraphs $G$ and $H,$ when $H=K_{k}^{k},$ Shao~\cite{Shao 2013} obtained the adjacency spectral radius relationship between $\widetilde{G}:=G\times K_{k}^{k}$ and $G:$ $\rho_{0}(\widetilde{G})=(k-1)!\rho_{0}(G).$  Kang, Liu and Shan~\cite{Kang Liu Shan 2018} obtained the corresponding analogues for signless Lapalcian spectral radius. Here we consider the  corresponding  analogues for  Laplacian eigenvalue,  $\mathcal{A}_{\alpha}$-spectral radius, respectively, and generalize them to the case when $H$ is a connected $d$-regular $k$-uniform hypergraph.

\begin{theorem}\label{direct product spectral radius}
Let $G$ be a  $k$-uniform hypergraph on $n$ vertices with Perron vector $u\in \mathbb{R}^{n}$ corresponding to $\rho(\mathcal{L}(G)).$ Let $\widetilde{G}:=G\times H$ be the direct product of $G$ and $H,$ where $H$ is a  $d$-regular $k$-uniform hypergraph on $m$ vertices. Then
$(k-1)!d\rho(\mathcal{L}(G))$ is an eigenvalue of $\mathcal{L}(\widetilde{G})$ with the corresponding eigenvector $u\otimes e,$
where $e=(1,1,\cdots,1)^{T}\in \mathbb{R}^{m}.$
\end{theorem}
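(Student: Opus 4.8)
The plan is to reduce the statement to two tensor-product identities for $\widetilde{G}$ and then invoke the interplay between the direct product $\otimes$ and the general product recorded in Theorem~\ref{direct product law2}. First I would determine how the adjacency and diagonal tensors of $\widetilde{G}$ decompose. By Definition~\ref{The direct product of  hypergraphs}, a set $\{(i_{1},j_{1}),\dots,(i_{k},j_{k})\}$ is an edge of $\widetilde{G}$ precisely when $\{i_{1},\dots,i_{k}\}\in E(G)$ and $\{j_{1},\dots,j_{k}\}\in E(H)$, and in that case the $k$ pairs are automatically distinct. Comparing the entry $\frac{1}{(k-1)!}$ of $\mathcal{A}(\widetilde{G})$ with the product of the two entries $\frac{1}{(k-1)!}$ of $\mathcal{A}(G)\otimes\mathcal{A}(H)$ gives
\[\mathcal{A}(\widetilde{G}) = (k-1)!\,\mathcal{A}(G)\otimes\mathcal{A}(H).\]
For the diagonal part I would first count degrees: an edge of $\widetilde{G}$ through $(i,j)$ is determined by an edge $e\ni i$ of $G$, an edge $f\ni j$ of $H$, and a bijection $e\to f$ sending $i\mapsto j$, giving $d_{\widetilde{G}}(i,j)=(k-1)!\,d_{G}(i)\,d_{H}(j)$, whence $\mathcal{D}(\widetilde{G}) = (k-1)!\,\mathcal{D}(G)\otimes\mathcal{D}(H)$. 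Subtracting and using Proposition~\ref{direct product law1} yields $\mathcal{L}(\widetilde{G}) = (k-1)!\bigl(\mathcal{D}(G)\otimes\mathcal{D}(H)-\mathcal{A}(G)\otimes\mathcal{A}(H)\bigr)$.

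Second, I would evaluate $\mathcal{L}(\widetilde{G})$ on the candidate eigenvector $u\otimes e$. Applying Theorem~\ref{direct product law2} with the vectors $u$ and $e$ (recall $\mathcal{T}x=\mathcal{T}x^{k-1}$) gives $(\mathcal{D}(G)\otimes\mathcal{D}(H))(u\otimes e)=(\mathcal{D}(G)u)\otimes(\mathcal{D}(H)e)$ and the analogous identity for the adjacency term. Since $H$ is $d$-regular and $e$ is the all-ones vector, computing the $j$-th coordinate directly gives $\mathcal{D}(H)e = de$ and $\mathcal{A}(H)e = de$. Factoring out $d$ and collapsing the two $\otimes e$ terms via Proposition~\ref{direct product law1} reduces the expression to
\[\mathcal{L}(\widetilde{G})(u\otimes e) = (k-1)!\,d\,\bigl((\mathcal{D}(G)u-\mathcal{A}(G)u)\otimes e\bigr) = (k-1)!\,d\,\bigl((\mathcal{L}(G)u)\otimes e\bigr).\]

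Finally, since $u$ is the eigenvector of $\mathcal{L}(G)$ for $\rho(\mathcal{L}(G))$, we have $\mathcal{L}(G)u=\rho(\mathcal{L}(G))\,u^{[k-1]}$, so the right-hand side becomes $(k-1)!\,d\,\rho(\mathcal{L}(G))\,(u^{[k-1]}\otimes e)$. It then remains only to note the bracket identity $(u\otimes e)^{[k-1]} = u^{[k-1]}\otimes e$, which holds because $e^{[k-1]}=e$ (every entry equals $1$); this exhibits $(k-1)!\,d\,\rho(\mathcal{L}(G))$ as an eigenvalue of $\mathcal{L}(\widetilde{G})$ with eigenvector $u\otimes e$. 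I expect the main obstacle to be the first paragraph, namely getting the combinatorial degree count right so that the $(k-1)!$ factors in $\mathcal{A}(\widetilde{G})$ and $\mathcal{D}(\widetilde{G})$ are consistent, since the edges of $\widetilde{G}$ are indexed by pairs of edges together with a bijection between them rather than by pairs of edges alone. Once the decomposition $\mathcal{L}(\widetilde{G}) = (k-1)!\bigl(\mathcal{D}(G)\otimes\mathcal{D}(H)-\mathcal{A}(G)\otimes\mathcal{A}(H)\bigr)$ is secured, the remainder is a routine application of the product rule together with the regularity of $H$.
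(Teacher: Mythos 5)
Your proof is correct and follows essentially the same route as the paper: both reduce to $\mathcal{L}(\widetilde{G})=(k-1)!\left(\mathcal{D}(G)\otimes\mathcal{D}(H)-\mathcal{A}(G)\otimes\mathcal{A}(H)\right)$ and then apply the product rule of Theorem~\ref{direct product law2} to $u\otimes e$ together with $\mathcal{D}(H)e=\mathcal{A}(H)e=de$. Your version is in fact slightly more direct, since you skip the paper's intermediate re-expansion into $\mathcal{L}(G)\otimes\mathcal{L}(H)-2\mathcal{A}(G)\otimes\mathcal{A}(H)+\mathcal{A}(G)\otimes\mathcal{D}(H)+\mathcal{D}(G)\otimes\mathcal{A}(H)$ (whose only purpose is to invoke $\mathcal{L}(H)e=0$), and you make explicit the bracket identity $(u\otimes e)^{[k-1]}=u^{[k-1]}\otimes e$ that the paper's final line silently elides.
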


\begin{proof}
Since $H$ is $d$-regular, then for each $i\in V(H),$
$$(\mathcal{L}(H)e)_{i}= d e_{i}^{k-1}-d e_{i}^{k-1}=0,$$
then we have $\mathcal{L}(H)e=0.$
From the definition of the direct product of hypergraphs, it is obvious that $d_{\widetilde{G}}((i,j))=(k-1)!d_{i}d_{j}$ for any $i\in V(G)$ and $j\in V(H).$ Thus  $\mathcal{D}(G\times H)=(k-1)!\mathcal{D}(G)\otimes \mathcal{D}(H),$ and Shao~\cite{Shao 2013} proved that $\mathcal{A}(G\times H)=(k-1)!\mathcal{A}(G)\otimes \mathcal{A}(H).$
From the definition of Laplacian tensor, we have
$$\mathcal{L}(\widetilde{G})=\mathcal{L}(G\times H)= \mathcal{D}(G\times H)-\mathcal{A}(G\times H),$$
or equivalently,
$$\mathcal{L}(\widetilde{G})=(k-1)! \mathcal{D}(G)\otimes \mathcal{D}(H)-(k-1)!\mathcal{A}(G)\otimes\mathcal{A}(H).$$
Therefore, we have
\begin{align*}
\begin{split}
 \mathcal{L}(\widetilde{G})_{(i_{1},j_{1})(i_{2},j_{2})\cdots(i_{k},j_{k})}=\left \{
\begin{array}{ll}
    (k-1)!d_{i_{1}}d_{j_{1}},            & if \quad i_{1}=i_{2}=\cdots=i_{k},\\
    &  \qquad j_{1}=j_{2}=\cdots=j_{k},\\
    -\frac{1}{(k-1)!},                   & if \quad \{i_{1},i_{2},\cdots,i_{k}\}\in E(G),\\
    & \qquad \{j_{1},j_{2},\cdots,j_{k}\}\in E(H),\\
    0,                                   & otherwise.
\end{array}
\right.
\end{split}
\end{align*}
Also, by the definition of direct product of tensors, we have

\begin{align*}
\begin{split}
 \mathcal{L}(G)\otimes \mathcal{L}(H) _{(i_{1},j_{1})(i_{2},j_{2})\cdots(i_{k},j_{k})}=\left \{
\begin{array}{ll}
    d_{i_{1}}d_{j_{1}},            & if \quad i_{1}=i_{2}=\cdots=i_{k},\\
    & \qquad j_{1}=j_{2}=\cdots=j_{k},\\
    \frac{1}{(k-1)!^2},            & if \quad \{i_{1},i_{2},\cdots,i_{k}\}\in E(G),\\
                                  & \qquad \{j_{1},j_{2},\cdots,j_{k}\}\in E(H),\\
    -\frac{1}{(k-1)!}d_{j_{1}},             & if \quad \{i_{1},i_{2},\cdots,i_{k}\}\in E(G),\\
    &  \qquad j_{1}=j_{2}=\cdots=j_{k},\\
    -\frac{1}{(k-1)!}d_{i_{1}},             & if  \quad i_{1}=i_{2}=\cdots=i_{k},\\
    &  \qquad \{j_{1},j_{2},\cdots,j_{k}\}\in E(H),\\
    0,                                  & otherwise.
\end{array}
\right.
\end{split}
\end{align*}
It is easy to see that

$$\mathcal{L}(\widetilde{G})=(k-1)!\left(\mathcal{L}(G)\otimes \mathcal{L}(H)-2\mathcal{A}(G)\otimes\mathcal{A}(H)+\mathcal{A}(G)\otimes \mathcal{D}(H)+\mathcal{D}(G)\otimes \mathcal{A}(H)\right).$$
According to Propositions~\ref{direct product law1} and \ref{direct product law2}, since  $H$ is a $d$-regular $k$-uniform hypergraph, we have
\begin{eqnarray*}
&&\mathcal{L}(\widetilde{G})(u\otimes e)\\
&=&  (k-1)!\left(\mathcal{L}(G)\otimes \mathcal{L}(H)-2\mathcal{A}(G)\otimes\mathcal{A}(H)+\mathcal{A}(G)\otimes \mathcal{D}(H)+\mathcal{D}(G)\otimes \mathcal{A}(H)\right)(u\otimes e)
\\
&=& (k-1)!\big(\mathcal{L}(G)\otimes \mathcal{L}(H)(u\otimes e)-2\mathcal{A}(G)\otimes\mathcal{A}(H)(u\otimes e)+\mathcal{A}(G)\otimes \mathcal{D}(H)(u\otimes e)\\
&&+\mathcal{D}(G)\otimes \mathcal{A}(H)(u\otimes e)\big)
\\
&=& (k-1)!\big((\mathcal{L}(G)u)\otimes (\mathcal{L}(H) e)-2(\mathcal{A}(G)u)\otimes(\mathcal{A}(H)e)+(\mathcal{A}(G)u)\otimes (\mathcal{D}(H) e)\\
&&+(\mathcal{D}(G)u)\otimes (\mathcal{A}(H) e)\big)
\\
&=& (k-1)!\left((-\mathcal{A}(G)u)\otimes(\mathcal{A}(H)e)+(\mathcal{D}(G)u)\otimes (\mathcal{A}(H) e)\right)
\\
&=& (k-1)!(\mathcal{L}(G)u)\otimes(\mathcal{A}(H)e)
\\
&=& (k-1)!\left(\mathcal{L}(G)u\otimes de\right)
\\
&=& (k-1)!d\rho(\mathcal{L}(G))\left(u\otimes e\right)
.
\end{eqnarray*}
Thus $(k-1)!d\rho(\mathcal{L}(G))$ is an eigenvalue of $\mathcal{L}(\widetilde{G})$ with the corresponding eigenvector $u\otimes e.$ Hence the proof is completed.
\end{proof}

\begin{theorem}\label{direct product spectral radius}
Let $G$ be a connected $k$-uniform hypergraph on $n$ vertices with Perron vector $u\in \mathbb{R}_{+}^{n}$ corresponding to $\rho_{\alpha}(G)$ $(k\geq3).$ Let $\widetilde{G}:=G\times H$ be the product of $G$ and $H,$ where $H$ is a connected $d$-regular $k$-uniform hypergraph on $m$ vertices. Then $\rho_{\alpha}(\widetilde{G})=(k-1)!d\rho_{\alpha}(G)$ and $u\otimes e$ is an eigenvector corresponding to $\rho_{\alpha}(\widetilde{G}),$ where $e=(1,1,\cdots,1)^{T}\in \mathbb{R}^{m}.$
\end{theorem}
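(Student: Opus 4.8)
The plan is to reproduce, for the tensor $\mathcal{A}_{\alpha}$, the kind of direct-product computation carried out above for $\mathcal{L}$, and then to upgrade the eigenvalue we find to the genuine spectral radius by a connectivity/positivity argument. First I would record the two structural identities already used, namely $\mathcal{D}(G\times H)=(k-1)!\,\mathcal{D}(G)\otimes\mathcal{D}(H)$ and Shao's identity $\mathcal{A}(G\times H)=(k-1)!\,\mathcal{A}(G)\otimes\mathcal{A}(H)$. Combining these with $\mathcal{A}_{\alpha}=\alpha\mathcal{D}+(1-\alpha)\mathcal{A}$ and the distributive law of Proposition~\ref{direct product law1} gives
$$\mathcal{A}_{\alpha}(\widetilde{G})=(k-1)!\big(\alpha\,\mathcal{D}(G)\otimes\mathcal{D}(H)+(1-\alpha)\,\mathcal{A}(G)\otimes\mathcal{A}(H)\big).$$

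Next I would exploit the $d$-regularity of $H$. Since every vertex of $H$ has degree $d$ and $e^{[k-1]}=e$, direct evaluation yields $\mathcal{D}(H)e=d\,e$ and $\mathcal{A}(H)e=d\,e$ (each edge through a fixed vertex contributes $(k-1)!\cdot\frac{1}{(k-1)!}=1$, and there are $d$ of them). Applying $\mathcal{A}_{\alpha}(\widetilde{G})$ to $u\otimes e$ and using Theorem~\ref{direct product law2} to distribute the general product over the direct product, I would obtain
$$\mathcal{A}_{\alpha}(\widetilde{G})(u\otimes e)=(k-1)!\big(\alpha(\mathcal{D}(G)u)\otimes(\mathcal{D}(H)e)+(1-\alpha)(\mathcal{A}(G)u)\otimes(\mathcal{A}(H)e)\big).$$
Substituting $\mathcal{D}(H)e=\mathcal{A}(H)e=d\,e$ and factoring, the right-hand side collapses to $(k-1)!d\,(\mathcal{A}_{\alpha}(G)u)\otimes e$. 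Because $u$ is the Perron vector of $\mathcal{A}_{\alpha}(G)$, we have $\mathcal{A}_{\alpha}(G)u=\rho_{\alpha}(G)\,u^{[k-1]}$, and since $e^{[k-1]}=e$ one checks the elementary identity $u^{[k-1]}\otimes e=(u\otimes e)^{[k-1]}$. Hence $\mathcal{A}_{\alpha}(\widetilde{G})(u\otimes e)=(k-1)!d\,\rho_{\alpha}(G)\,(u\otimes e)^{[k-1]}$, which shows that $(k-1)!d\,\rho_{\alpha}(G)$ is an eigenvalue of $\mathcal{A}_{\alpha}(\widetilde{G})$ with eigenvector $u\otimes e$.

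Finally I would promote this eigenvalue to the spectral radius. Since $G$ and $H$ are connected and $k\geq 3$, Lemma~\ref{A connected} guarantees that $\widetilde{G}$ is connected, so $\mathcal{A}_{\alpha}(\widetilde{G})$ is symmetric, nonnegative, and weakly irreducible. By connectivity of $G$ the Perron vector satisfies $u\in\mathbb{R}_{++}^{n}$, and $e\in\mathbb{R}_{++}^{m}$, whence $u\otimes e\in\mathbb{R}_{++}^{nm}$. By the uniqueness part of the Perron--Frobenius theorem, Lemma~\ref{Perron Frobenius}(2), the only eigenvalue of $\mathcal{A}_{\alpha}(\widetilde{G})$ admitting a strictly positive eigenvector is $\rho_{\alpha}(\widetilde{G})$; therefore $(k-1)!d\,\rho_{\alpha}(G)=\rho_{\alpha}(\widetilde{G})$.

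The routine part is pure bookkeeping with the distributive laws of Proposition~\ref{direct product law1} and Theorem~\ref{direct product law2}; the real content, and the step I expect to be the main obstacle, is the last one. Producing an eigenpair is not enough: I must know it is the dominant one, and this rests on the weak irreducibility of $\mathcal{A}_{\alpha}(\widetilde{G})$ (hence on connectivity of the product, which is precisely what Lemma~\ref{A connected} supplies and why the hypothesis $k\geq 3$ enters) together with the \emph{strict} positivity of $u\otimes e$. I would therefore take care to argue that $u$ is genuinely positive rather than merely nonnegative before invoking the uniqueness statement, since for a weakly irreducible symmetric nonnegative tensor a strictly positive eigenvector forces the associated eigenvalue to be the spectral radius.
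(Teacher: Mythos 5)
Your proposal is correct and follows essentially the same route as the paper: the same identities $\mathcal{D}(G\times H)=(k-1)!\,\mathcal{D}(G)\otimes\mathcal{D}(H)$ and $\mathcal{A}(G\times H)=(k-1)!\,\mathcal{A}(G)\otimes\mathcal{A}(H)$, the same direct-product computation on $u\otimes e$, and the same appeal to Lemma~\ref{A connected} plus Perron--Frobenius to identify the eigenvalue as the spectral radius. Your bookkeeping is in fact slightly cleaner (you recombine $\alpha\mathcal{D}(G)u+(1-\alpha)\mathcal{A}(G)u$ directly rather than substituting $\mathcal{A}(G)=\frac{\mathcal{A}_{\alpha}(G)-\alpha\mathcal{D}(G)}{1-\alpha}$ as the paper does, and you are explicit about the identity $u^{[k-1]}\otimes e=(u\otimes e)^{[k-1]}$ and the strict positivity of $u$, points the paper leaves implicit).
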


\begin{proof}
Since $H$ is $d$-regular,  then for each $i\in V(H),$
$$(\mathcal{A}_{\alpha}(H)e)_{i}=\alpha d e_{i}^{k-1}+(1-\alpha)d e_{i}^{k-1}=de_{i}^{k-1},$$
then we have $\mathcal{A}_{\alpha}(H)e=de,$ by the connectedness of $H$ and   Lemma~\ref{Perron Frobenius}, we have $\rho_{\alpha}(H)=d.$
Since $\mathcal{D}(G\times H)=(k-1)!\mathcal{D}(G)\otimes \mathcal{D}(H),$ and Shao~\cite{Shao 2013} proved that $\mathcal{A}(G\times H)=(k-1)!\mathcal{A}(G)\otimes \mathcal{A}(H).$
From the definition of $\mathcal{A}_{\alpha}$-tensor, we have
$$\mathcal{A}_{\alpha}(\widetilde{G})=\mathcal{A}_\alpha(G\times H)=\alpha \mathcal{D}(G\times H)+(1-\alpha)\mathcal{A}(G\times H),$$
or equivalently,
$$\mathcal{A}_{\alpha}(\widetilde{G})=\alpha(k-1)! \mathcal{D}(G)\otimes \mathcal{D}(H)+(1-\alpha)(k-1)!\mathcal{A}(G)\otimes\mathcal{A}(H).$$
Since $\mathcal{A}_{\alpha}(G)=\alpha\mathcal{D}(G)+(1-\alpha)\mathcal{A}(G),$
where $0\leq \alpha <1,$ we have
$$\mathcal{A}(G)=\frac{\mathcal{A}_{\alpha}(G)-\alpha\mathcal{D}(G)}{1-\alpha}.$$
According to Proposition~\ref{direct product law1} and Proposition~\ref{direct product law2}, since  $H$ is  $d$-regular, we have
 \begin{eqnarray*}
&&\mathcal{A}_\alpha(\widetilde{G})(u\otimes e)\\
&=&  \left(\alpha(k-1)! \mathcal{D}(G)\otimes \mathcal{D}(H)+(1-\alpha)(k-1)!\mathcal{A}(G)\otimes\mathcal{A}(H)\right)(u\otimes e)
\\
&=& (k-1)!\left(\alpha \mathcal{D}(G)\otimes \mathcal{D}(H)+(1-\alpha)\mathcal{A}(G)\otimes\mathcal{A}(H)\right)(u\otimes e)
\\
&=& (k-1)!\left(\alpha \mathcal{D}(G)\otimes\mathcal{D}(H)+(1-\alpha)\frac{\mathcal{A}_{\alpha}(G)-\alpha\mathcal{D}(G)}{1-\alpha}\otimes\mathcal{A}(H)\right)(u\otimes e)
\\
&=& (k-1)!\left(\alpha \mathcal{D}(G)\otimes \mathcal{D}(H)+\left(\mathcal{A}_{\alpha}(G)-\alpha\mathcal{D}(G)\right)\otimes\mathcal{A}(H)\right)(u\otimes e)
\\
&=& (k-1)!\big(\alpha \mathcal{D}(G)\otimes \mathcal{D}(H)(u\otimes e)
\\
&&+\mathcal{A}_{\alpha}(G)\otimes\mathcal{A}(H)(u\otimes e)-\alpha\mathcal{D}(G)\otimes\mathcal{A}(H)(u\otimes e)\big)
\\
&=& (k-1)!\left(\alpha \mathcal{D}(G)u\otimes \mathcal{D}(H)e+\mathcal{A}_{\alpha}(G)u\otimes\mathcal{A}(H)e-\alpha\mathcal{D}(G)u\otimes\mathcal{A}(H) e\right)
\\
&=& (k-1)!\left(\alpha \mathcal{D}(G)u\otimes de+\mathcal{A}_{\alpha}(G)u\otimes\mathcal{A}(H)e-\alpha\mathcal{D}(G)u\otimes de\right)
\\
&=& (k-1)!\left(\mathcal{A}_{\alpha}(G)u\otimes\mathcal{A}(H)e\right)
\\
&=& (k-1)!\left(\rho_{\alpha}(G)u\otimes de\right)
\\
&=& (k-1)!d\rho_{\alpha}(G)\left(u\otimes e\right)
.
\end{eqnarray*}\emph{}
Since $G, H$ are both connected, then $\widetilde{G}$ is connected by Lemma \ref{A connected}. Obviously, we have $\mathcal{A}_{\alpha}(\widetilde{G})$ is weakly irreducible. So by Lemma~\ref{Perron Frobenius}, we have $(k-1)!d\rho_{\alpha}(G)$ is the spectral radius of $\mathcal{A}_{\alpha}(\widetilde{G})$ with the corresponding eigenvector $u\otimes e.$ Hence the proof is completed.
\end{proof}



\section{Lower bounds for the $\mathcal{A}_{\alpha}$-Spectral radius of uniform hypergraphs}
In this section,  we present some lower bounds for the $\mathcal{A}_{\alpha}$-spectral radius of $k$-uniform hypergraphs.

Let $G$ be a $k$-uniform hypergraph on $n$ vertices with $m$ edges. By Lemma~\ref{spectral radius}, it is easy to see that if $x$ is a unit column vector in $\mathbb{R}^{n},$ then
$$\rho_{\alpha}(G)\geq x^{T}(\mathcal{A}_{\alpha}(G)x)= \alpha \sum_{i=1}^{n}d_{i}x_{i}^{k}+(1-\alpha)\sum_{e\in E(G)}kx^{e}.$$
Moreover, if $G$ is connected, equality holds if and only if $x$ is an eigenvector corresponding to $\rho_{\alpha}(G).$ By Lemma~\ref{spectral radius} and choosing $x=(\frac{1}{\sqrt[k]{n}},\frac{1}{\sqrt[k]{n}},\cdots,\frac{1}{\sqrt[k]{n}})^{T}$ in the above inequality, we  have the following result.

\begin{lemma}\label{average degree}
Let $G$ be a 
$k$-uniform hypergraph on $n$ vertices and $m$ edges, then
$$\rho_{\alpha}(G)\geq \frac{km}{n}.$$
If $G$ is connected, equality holds if and only if $G$ is regular.
\end{lemma}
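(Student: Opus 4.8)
The plan is to apply the variational characterization from Lemma~\ref{spectral radius} with a carefully chosen test vector. Since $\mathcal{A}_{\alpha}(G)$ is a symmetric nonnegative tensor, Lemma~\ref{spectral radius} gives
$$\rho_{\alpha}(G)=\max\{x^{T}(\mathcal{A}_{\alpha}(G)x):x\in\mathbb{R}_{+}^{n},\,\|x\|_{k}=1\},$$
so any single admissible vector yields a lower bound. The natural choice is the uniform vector $x=(n^{-1/k},\ldots,n^{-1/k})^{T}$, which is nonnegative and satisfies $\|x\|_{k}^{k}=n\cdot n^{-1}=1$, hence is admissible.

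First I would substitute this $x$ into the explicit formula for the quadratic form recorded in the excerpt, namely $x^{T}(\mathcal{A}_{\alpha}(G)x)=\alpha\sum_{i=1}^{n}d_{i}x_{i}^{k}+(1-\alpha)\sum_{e\in E(G)}kx^{e}$. With every coordinate equal to $n^{-1/k}$, each term $x_{i}^{k}$ equals $1/n$ and each monomial $x^{e}=\prod_{i\in e}x_{i}$ equals $(n^{-1/k})^{k}=1/n$. The first sum becomes $\alpha\cdot\frac{1}{n}\sum_{i=1}^{n}d_{i}=\alpha\cdot\frac{km}{n}$, using the handshake-type identity $\sum_{i}d_{i}=km$ (each of the $m$ edges contributes to the degree of its $k$ vertices). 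The second sum becomes $(1-\alpha)\cdot k\cdot m\cdot\frac{1}{n}=(1-\alpha)\cdot\frac{km}{n}$. Adding these gives $\alpha\frac{km}{n}+(1-\alpha)\frac{km}{n}=\frac{km}{n}$, establishing the inequality $\rho_{\alpha}(G)\geq\frac{km}{n}$.

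For the equality characterization (under connectedness), I would invoke the second half of Lemma~\ref{spectral radius} together with the Perron--Frobenius uniqueness from Lemma~\ref{Perron Frobenius}: when $G$ is connected, $\mathcal{A}_{\alpha}(G)$ is weakly irreducible, so equality holds if and only if the uniform vector $x$ is (a scalar multiple of) the unique positive Perron eigenvector. The plan is to show the uniform vector is an eigenvector exactly when $G$ is regular. Computing $(\mathcal{A}_{\alpha}(G)x)_{i}=\alpha d_{i}x_{i}^{k-1}+(1-\alpha)\sum_{e\in E_{i}(G)}x^{e\setminus\{i\}}$ at the uniform vector gives $\alpha d_{i}n^{-(k-1)/k}+(1-\alpha)d_{i}n^{-(k-1)/k}=d_{i}\,n^{-(k-1)/k}$; the eigenequation $(\mathcal{A}_{\alpha}(G)x)_{i}=\rho\,x_{i}^{k-1}=\rho\,n^{-(k-1)/k}$ then forces $d_{i}=\rho$ for every $i$, i.e.\ all degrees are equal, which is regularity. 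Conversely, if $G$ is $d$-regular the uniform vector is immediately an eigenvector with eigenvalue $d=\frac{km}{n}$.

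I do not expect any serious obstacle here; the result is essentially a one-line test-vector computation plus a standard Perron--Frobenius argument. The only point requiring mild care is the equality analysis: one must cite weak irreducibility (valid because $G$ is connected) to guarantee the Perron vector is unique and strictly positive, so that matching it against the uniform vector legitimately forces constant degrees. The degree-sum identity $\sum_{i}d_{i}=km$ should be stated explicitly since it is what converts the bound into the clean average-degree form $\frac{km}{n}$.
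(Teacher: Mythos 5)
Your proposal is correct and follows exactly the paper's route: the paper also obtains this lemma by plugging the uniform vector $x=(n^{-1/k},\ldots,n^{-1/k})^{T}$ into the variational characterization of Lemma~\ref{spectral radius} and using $\sum_{i}d_{i}=km$, with the equality case handled through the Perron vector for connected $G$. No gaps; nothing further to add.
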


We now obtain a lower bound on $\rho_{\alpha}(G)- \frac{km}{n}$ in terms of the restrictions $d_{S}=\left(d_{i}\right)_{i\in S}$ of the degree sequence
of $G$ to a subset $S$ of the vertex set.

\begin{lemma}\label{main lemma1}
Let $G$ be a $k$-uniform hypergraph on $n$ vertices with $m$ edges, and $S$ be a strong independent set of G with $|S|=s.$ Then
$$\rho_{\alpha}(G)- \frac{km}{n}\geq \frac{1}{n}\left(\alpha\left(s\frac{\sum_{i\in S}d_{i}^{\frac{2k-1}{k-1}}}{\sum_{i\in S}d_{i}^{\frac{k}{k-1}}}-\sum_{i\in S}d_{i}\right)+(1-\alpha)k\left(s^{\frac{1}{k}}\left(\sum_{i\in S}d_{i}^{\frac{k}{k-1}}\right)^{\frac{k-1}{k}}-\sum_{i\in S}d_{i}\right)\right).$$
\end{lemma}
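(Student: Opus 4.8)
The plan is to exhibit a single nonnegative unit test vector $x$ whose Rayleigh quotient $x^{T}(\mathcal{A}_{\alpha}(G)x)$ evaluates \emph{exactly} to $\frac{km}{n}$ plus the claimed right-hand side, and then to invoke the variational lower bound $\rho_{\alpha}(G)\geq x^{T}(\mathcal{A}_{\alpha}(G)x)$ supplied by Lemma~\ref{spectral radius}. Writing $P=\sum_{i\in S}d_{i}^{k/(k-1)}$, I would take
$$x_{i}=\Big(\tfrac{s}{nP}\Big)^{1/k}\,d_{i}^{1/(k-1)}\quad (i\in S),\qquad x_{j}=n^{-1/k}\quad (j\notin S).$$
A direct check gives $\sum_{i\in S}x_{i}^{k}=\frac{s}{nP}\cdot P=\frac{s}{n}$ and $\sum_{j\notin S}x_{j}^{k}=\frac{n-s}{n}$, so $\|x\|_{k}^{k}=1$ and $x\in\mathbb{R}_{+}^{n}$, which makes $x$ admissible in Lemma~\ref{spectral radius} (no connectivity is needed, since $\mathcal{A}_{\alpha}(G)$ is symmetric and nonnegative regardless).

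The decisive structural input is that $S$ is strong independent (Definition~\ref{strong independent set}): every edge meets $S$ in at most one vertex. Consequently the $d_{i}$ edges through a fixed $i\in S$ are pairwise disjoint across $i\in S$, so the edge set splits into $\sum_{i\in S}d_{i}$ edges meeting $S$, each in a single vertex $i$ and contributing $x^{e}=x_{i}\,n^{-(k-1)/k}$, and $m-\sum_{i\in S}d_{i}$ edges avoiding $S$, each contributing $x^{e}=n^{-1}$. This is exactly what lets me write both $\sum_{i}d_{i}x_{i}^{k}$ and $\sum_{e}x^{e}$ in closed form in terms of the degrees on $S$ alone.

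Substituting into $x^{T}(\mathcal{A}_{\alpha}(G)x)=\alpha\sum_{i}d_{i}x_{i}^{k}+(1-\alpha)k\sum_{e}x^{e}$ and subtracting $\frac{km}{n}$, the contributions from vertices and edges outside $S$ cancel against $\frac{km}{n}$, using $\sum_{i\in S}d_{i}+\sum_{j\notin S}d_{j}=km$: the $\alpha$-part collapses to $\sum_{i\in S}d_{i}x_{i}^{k}-\frac{1}{n}\sum_{i\in S}d_{i}$ and the $(1-\alpha)$-part to $k\big(n^{-(k-1)/k}\sum_{i\in S}d_{i}x_{i}-\frac{1}{n}\sum_{i\in S}d_{i}\big)$. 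The remaining step is exponent bookkeeping for the chosen $x$: the identities $d_{i}\cdot d_{i}^{k/(k-1)}=d_{i}^{(2k-1)/(k-1)}$ and $d_{i}\cdot d_{i}^{1/(k-1)}=d_{i}^{k/(k-1)}$ turn $\sum_{i\in S}d_{i}x_{i}^{k}$ into $\frac{s}{n}\cdot\frac{\sum_{i\in S}d_{i}^{(2k-1)/(k-1)}}{P}$ and turn $n^{-(k-1)/k}\sum_{i\in S}d_{i}x_{i}$ into $\frac{1}{n}\,s^{1/k}P^{(k-1)/k}$, matching the two bracketed expressions in the statement.

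Because this computation is an identity, no auxiliary inequality is actually required at this stage; Lemma~\ref{square inequality} and Lemma~\ref{Jensen's inequality} would only be relevant if one wished to verify afterward that the right-hand side is nonnegative (which it is, consistently with Lemma~\ref{average degree}). The only genuine obstacle is discovering the correct ansatz, namely the weighting $x_{i}\propto d_{i}^{1/(k-1)}$ on $S$ paired with the uniform value $n^{-1/k}$ off $S$; once this is fixed, the strong-independence bookkeeping for $\sum_{e}x^{e}$ and the exponent arithmetic are routine, and applying $\rho_{\alpha}(G)\geq x^{T}(\mathcal{A}_{\alpha}(G)x)$ finishes the proof.
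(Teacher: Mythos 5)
Your proof is correct and follows essentially the same route as the paper: the paper also tests the vector $x_i=a_i n^{-1/k}$ on $S$, $x_i=n^{-1/k}$ off $S$, uses strong independence to split the edge sum, and then picks $a_i=c\,d_i^{1/(k-1)}$ with $c=(s/\sum_{i\in S}d_i^{k/(k-1)})^{1/k}$ — exactly the vector you write down directly. The only cosmetic difference is that the paper arrives at this choice via the equality case of H\"{o}lder's inequality rather than positing it at the outset.
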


\begin{proof}
Let $s=|S|$ denote the number of vertices in $S.$ Taking $x_{i}=\frac{a_{i}}{\sqrt[k]{n}}$ for $i \in S$ and $x_{i}=\frac{1}{\sqrt[k]{n}}$
when $i\notin S.$ We have $\parallel x\parallel_{k}=1$ when $\sum_{i\in S}a_{i}^{k}=s.$ Since  $S$ is a strong independent set of $G,$
we have $|e\cap S|\leq1$ for each edge $e\in E(G).$ Then we have

\begin{eqnarray*}
&&\rho_{\alpha}(G)-\frac{km}{n}
\\
&\geq &  x^T (\mathcal{A}_{\alpha}(G)x)-\frac{km}{n}
\\
&=& \alpha \sum_{i\in V(G)}d_{i}x_{i}^{k}+(1-\alpha)k \sum_{e\in E(G)} x^{e}-\frac{km}{n}
\\
&=& \alpha \left(\sum_{i\in S}d_{i}x_{i}^{k}+\sum_{i\in V\setminus S}d_{i}x_{i}^{k}\right)+ (1-\alpha)k \left(\sum_{e\in E(G), e\cap S\neq \emptyset} x^{e}+\sum_{e\in E(G), e\cap S= \emptyset} x^{e}\right)-\frac{km}{n}
\\
&=& \alpha \left(\sum_{i\in S}d_{i}x_{i}^{k}+\sum_{i\in V\setminus S}d_{i}x_{i}^{k}\right)
\\
&&+(1-\alpha)k \left(\sum_{\substack{\{i_{1},i_{2},\cdots,i_{k}\}\in E(G)\\ \{i_{1},i_{2},\cdots,i_{k}\}\cap S\neq \emptyset}} x_{i_{1}}x_{i_{2}}\cdots x_{i_{k}}
+\sum_{\substack{\{i_{1},i_{2},\cdots,i_{k}\}\in E(G)\\ \{i_{1},i_{2},\cdots,i_{k}\}\cap S= \emptyset}} x_{i_{1}}x_{i_{2}}\cdots x_{i_{k}}\right)-\frac{km}{n}
\\
&=& \alpha \left(\sum_{i\in S}d_{i}\frac{a_{i}^{k}}{n}+\sum_{i\in V\setminus S}d_{i}\frac{1}{n}\right)+
   (1-\alpha)k \left(\sum_{i\in S} d_{i}\frac{a_{i}}{n}+\sum_{\substack{e \in E(G)\\ e\cap S= \emptyset}} \frac{1}{n}\right)-\frac{km}{n}
\\
&=& \frac{\alpha}{n} \left(\sum_{i\in S}d_{i}a_{i}^{k}+\left(km-\sum_{i\in S}d_{i}\right)\right)+
   \frac{(1-\alpha)k}{n} \left(\sum_{i\in S} d_{i}a_{i}+\left(m-\sum_{i\in S}d_{i}\right) \right)-\frac{km}{n}
\\
&=& \frac{\alpha}{n} \left(\sum_{i\in S}d_{i}a_{i}^{k}-\sum_{i\in S}d_{i}\right)+
   \frac{(1-\alpha)k}{n} \left(\sum_{i\in S} d_{i}a_{i}-\sum_{i\in S}d_{i} \right)
\end{eqnarray*}
If we choose the $a_{i}$ so that the equality holds in the H\"{o}lder inequality, $\sum_{i\in S} d_{i}a_{i}\leq \left(\sum_{i\in S}a_{i}^{k}\right)^{\frac{1}{k}}\left(\sum_{i\in S}d_{i}^{\frac{k}{k-1}}\right)^{\frac{k-1}{k}},$
equality holds if and only if $a_{i}^{k}=c^{k}d_{i}^{\frac{k}{k-1}},$ i.e., $a_{i}=cd_{i}^{\frac{1}{k-1}},$  where $c$ is a constant. Therefore, we have $c=\sqrt[k]{\frac{s}{\sum_{i\in S}d_{i}^{\frac{k}{k-1}}}}$ by $s=\sum_{i\in S}a_{i}^{k}=c^{k}\sum_{i\in S}d_{i}^{\frac{k}{k-1}}.$
Thus we have
\begin{eqnarray*}
&&\rho_{\alpha}(G)-\frac{km}{n}
\\
&\geq & \frac{\alpha}{n} \left(\sum_{i\in S}d_{i}a_{i}^{k}-\sum_{i\in S}d_{i}\right)+
   \frac{(1-\alpha)k}{n} \left(\sum_{i\in S} d_{i}a_{i}-\sum_{i\in S}d_{i} \right)
\\
&=& \frac{\alpha}{n} \left(\sum_{i\in S}d_{i}\frac{sd_{i}^{\frac{k}{k-1}}}{\sum_{i\in S}d_{i}^{\frac{k}{k-1}}}-\sum_{i\in S}d_{i}\right)+
   \frac{(1-\alpha)k}{n} \left(\sum_{i\in S} d_{i}\sqrt[k]{\frac{sd_{i}^{\frac{k}{k-1}}}{\sum_{i\in S}d_{i}^{\frac{k}{k-1}}}}-\sum_{i\in S}d_{i} \right)
\\
&=&\frac{1}{n}\left(\alpha\left(s\frac{\sum_{i\in S}d_{i}^{\frac{2k-1}{k-1}}}{\sum_{i\in S}d_{i}^{\frac{k}{k-1}}}-\sum_{i\in S}d_{i}\right)+(1-\alpha)k\left(s^{\frac{1}{k}}\left(\sum_{i\in S}d_{i}^{\frac{k}{k-1}}\right)^{\frac{k-1}{k}}-\sum_{i\in S}d_{i}\right)\right).
\end{eqnarray*}
We complete the proof.
\end{proof}

Taking $S$  be a maximum strong independent set in Lemma~\ref{main lemma1}, we obtain the following corollary.
\begin{corollary}
Let $G$ be a  $k$-uniform hypergraph on $n$ vertices with $m$ edges, and $S$ be a maximum strong independent set of $G,$ then
\begin{eqnarray*}
&&\rho_{\alpha}(G)- \frac{km}{n}\\
&\geq & \frac{\alpha}{n}\left(\alpha_{s}(G)\frac{\sum_{i\in S}d_{i}^{\frac{2k-1}{k-1}}}{\sum_{i\in S}d_{i}^{\frac{k}{k-1}}}-\sum_{i\in S}d_{i}\right)+\frac{(1-\alpha)k}{n}\left(\alpha_{s}(G)^{\frac{1}{k}}\left(\sum_{i\in S}d_{i}^{\frac{k}{k-1}}\right)^{\frac{k-1}{k}}-\sum_{i\in S}d_{i}\right).
\end{eqnarray*}
\end{corollary}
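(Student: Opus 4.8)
The plan is to obtain this corollary as an immediate specialization of Lemma~\ref{main lemma1}, so essentially no new work is required beyond choosing the set $S$ optimally. First I would take $S$ to be a \emph{maximum} strong independent set of $G$, which is a legitimate input to Lemma~\ref{main lemma1} since it is in particular a strong independent set. By the definition of $\alpha_{s}(G)$ as the maximum cardinality of a strong independent set, this choice forces $s = |S| = \alpha_{s}(G)$.

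Next I would write down the conclusion of Lemma~\ref{main lemma1} for this $S$, namely
$$\rho_{\alpha}(G)- \frac{km}{n}\geq \frac{1}{n}\left(\alpha\left(s\frac{\sum_{i\in S}d_{i}^{\frac{2k-1}{k-1}}}{\sum_{i\in S}d_{i}^{\frac{k}{k-1}}}-\sum_{i\in S}d_{i}\right)+(1-\alpha)k\left(s^{\frac{1}{k}}\left(\sum_{i\in S}d_{i}^{\frac{k}{k-1}}\right)^{\frac{k-1}{k}}-\sum_{i\in S}d_{i}\right)\right),$$
and then substitute $s = \alpha_{s}(G)$. Distributing the prefactor $\frac{1}{n}$ across the two summands converts it into the coefficients $\frac{\alpha}{n}$ and $\frac{(1-\alpha)k}{n}$ in front of the respective brackets, which reproduces the claimed inequality verbatim.

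Since this statement is a direct specialization of an already-established lemma, I do not expect any genuine obstacle. The only points worth confirming are that a maximum strong independent set indeed satisfies the hypothesis of Lemma~\ref{main lemma1} and that its size equals $\alpha_{s}(G)$; no further estimates, inequalities, or case distinctions are needed.
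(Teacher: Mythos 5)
Your proposal is correct and is exactly the paper's argument: the paper derives this corollary by taking $S$ to be a maximum strong independent set in Lemma~\ref{main lemma1}, so that $s=\alpha_{s}(G)$, and then rewriting the prefactor. No further commentary is needed.
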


\begin{theorem}\label{main theorem1}
Let $G$ be a connected $k$-uniform hypergraph on $n$ vertices with $m$ edges $(k\geq 3),$ and $S$ be a subset of $V(G)$ with $|S|=s.$ Then
\begin{eqnarray*}
&&\rho_{\alpha}(G)- \frac{km}{n}\\
&\geq& \frac{1}{kn}\left(\alpha\left(s\frac{\sum_{i\in S}d_{i}^{\frac{2k-1}{k-1}}}{\sum_{i\in S}d_{i}^{\frac{k}{k-1}}}-\sum_{i\in S}d_{i}\right)+(1-\alpha)k\left(s^{\frac{1}{k}}\left(\sum_{i\in S}d_{i}^{\frac{k}{k-1}}\right)^{\frac{k-1}{k}}-\sum_{i\in S}d_{i}\right)\right).
\end{eqnarray*}
\end{theorem}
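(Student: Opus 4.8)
The plan is to deduce the general-subset bound from the strong–independent–set bound of Lemma~\ref{main lemma1} by passing to the direct product $\widetilde{G}:=G\times K_{k}^{k}$, where $K_{k}^{k}$ is the single-edge $k$-uniform hypergraph. The key observation is that although an arbitrary $S\subseteq V(G)$ need not be strong independent in $G$, it admits a natural lift into $\widetilde{G}$ that \emph{is} strong independent, while the spectral and degree data of $\widetilde{G}$ are explicit scalar multiples of those of $G$. The extra factor $\tfrac1k$ in the conclusion will come for free from the fact that $|V(\widetilde{G})|=kn$.

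First I would record the parameters of $\widetilde{G}$. Since $K_{k}^{k}$ is connected and $1$-regular and $G$ is connected with $k\geq 3$, Lemma~\ref{A connected} shows $\widetilde{G}$ is connected, and the $\mathcal{A}_{\alpha}$-product formula of Theorem~\ref{direct product spectral radius} (with $d=1$) gives $\rho_{\alpha}(\widetilde{G})=(k-1)!\,\rho_{\alpha}(G)$. Writing $D=(k-1)!$, one has $|V(\widetilde{G})|=kn$, $|E(\widetilde{G})|=k!\,m$, and $d_{\widetilde{G}}((i,s))=D\,d_{i}$ for every vertex $(i,s)$. In particular $\tfrac{k\,|E(\widetilde G)|}{|V(\widetilde G)|}=\tfrac{k!\,m}{n}=D\cdot\tfrac{km}{n}$, so that $\rho_{\alpha}(\widetilde G)-\tfrac{k|E(\widetilde G)|}{|V(\widetilde G)|}=D\big(\rho_{\alpha}(G)-\tfrac{km}{n}\big)$.

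The conceptual step is to set $\widetilde{S}:=\{(i,1):i\in S\}$ and observe that $\widetilde{S}$ is a strong independent set of $\widetilde{G}$ with $|\widetilde S|=s$. Indeed, every edge of $\widetilde{G}$ has the form $\{(i_{1},s_{1}),\dots,(i_{k},s_{k})\}$ with $\{s_{1},\dots,s_{k}\}=[k]$, so it contains exactly one vertex whose second coordinate equals $1$; hence it meets $\widetilde S$ in at most one vertex. I would then apply Lemma~\ref{main lemma1} to $\widetilde{G}$ and $\widetilde{S}$ and substitute $d_{(i,1)}=D\,d_{i}$ throughout. The remaining work is the bookkeeping of the powers of $D$: using $\tfrac{2k-1}{k-1}-\tfrac{k}{k-1}=1$ and $\big(D^{k/(k-1)}\big)^{(k-1)/k}=D$, both the $\alpha$-bracket and the $(1-\alpha)$-bracket of Lemma~\ref{main lemma1} evaluated on $\widetilde S$ factor exactly as $D$ times the corresponding bracket for $S$ in $G$, while the prefactor $\tfrac{1}{|V(\widetilde G)|}=\tfrac{1}{kn}$ supplies the $\tfrac1k$. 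Dividing the resulting inequality by $D>0$ cancels every factor of $(k-1)!$ and yields precisely the claimed bound.

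I expect the genuinely substantive points to be conceptual rather than computational: recognizing that multiplying by $K_{k}^{k}$ resolves an arbitrary $S$ into a strong independent set (which is exactly where the factor $\tfrac1k$ originates, via $|V(\widetilde G)|=kn$), and verifying that the exponents of $D=(k-1)!$ cancel so cleanly that no residual constant survives. This detour is what makes the argument work: a naive attempt to imitate the proof of Lemma~\ref{main lemma1} directly in $G$ stalls at lower-bounding the products $\prod_{j\in e\cap S}x_{j}$ over edges meeting $S$ in several vertices, and it is precisely these products that the passage to $\widetilde G$ circumvents. Everything else is a direct appeal to Lemma~\ref{main lemma1}, Lemma~\ref{A connected}, and Theorem~\ref{direct product spectral radius}.
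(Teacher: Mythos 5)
Your proposal is correct and follows essentially the same route as the paper: the authors likewise pass to $\widetilde{G}=G\times K_{k}^{k}$, observe that $S\times\{a\}$ is a strong independent set of $\widetilde{G}$, apply Lemma~\ref{main lemma1} there, and divide out $(k-1)!$ via Theorem~\ref{direct product spectral radius}. The only cosmetic difference is that the paper first dispatches the case where $S$ is already strong independent directly from Lemma~\ref{main lemma1}, whereas you apply the product construction uniformly, which is equally valid.
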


\begin{proof}
If  $S$ is a strong independent set, then by Lemma~\ref{main lemma1}, we obtain the result.
If $S$ is not a strong independent set, then we consider the direct product $\widetilde{G}:=G\times K_{k}^{k}.$ Clearly,
$\widetilde{G}$ is a $k$-partite $k$-uniform hypergraph with partition:
$$V(\widetilde{G})=\bigcup\limits_{j=1}^{k}(V(G)\times \{j\}).$$
Clearly, $|V(\widetilde{G})|=k|V(G)|,$ $|E(\widetilde{G})|=k!|E(G)|,$ and $d_{\widetilde{G}}((i,a))=(k-1)!d_{i},$
where $d_{i}$ is the degree of vertex $i$ in $G,$ $i\in V(G),a\in [k]=V(K_{k}^{k}).$

Note that $S\times \{a\}$ is a strong independent set in $\widetilde{G}.$ Applying  the previous inequality, we have
\begin{eqnarray*}
&&\rho_{\alpha}(\widetilde{G})- \frac{kk!m}{kn}
\\
&\geq &  \frac{(k-1)!}{kn}\left(\alpha\left(s\frac{\sum_{i\in S}d_{i}^{\frac{2k-1}{k-1}}}{\sum_{i\in S}d_{i}^{\frac{k}{k-1}}}-\sum_{i\in S}d_{i}\right)+(1-\alpha)k\left(s^{\frac{1}{k}}\left(\sum_{i\in S}d_{i}^{\frac{k}{k-1}}\right)^{\frac{k-1}{k}}-\sum_{i\in S}d_{i}\right)\right).
\end{eqnarray*}

Since $K_{k}^{k}$ is  $1$-regular and $G$ is connected,   by Theorem~\ref{direct product spectral radius}, we know that $\rho_{\alpha}(\widetilde{G})=(k-1)!\rho_{\alpha}(G),$ hence we have
\begin{eqnarray*}
&&\rho_{\alpha}(G)- \frac{km}{n}\\
&\geq& \frac{1}{kn}\left(\alpha\left(s\frac{\sum_{i\in S}d_{i}^{\frac{2k-1}{k-1}}}{\sum_{i\in S}d_{i}^{\frac{k}{k-1}}}-\sum_{i\in S}d_{i}\right)+(1-\alpha)k\left(s^{\frac{1}{k}}\left(\sum_{i\in S}d_{i}^{\frac{k}{k-1}}\right)^{\frac{k-1}{k}}-\sum_{i\in S}d_{i}\right)\right).
\end{eqnarray*}
The proof is completed.
\end{proof}

\begin{remark}\label{inequality}
{\rm For a subset $S$ of the vertex set $V(G)$ of a connected $k$-uniform hypergraph $G$ $(k\geq 3),$ on the one hand, it follows from the Rearrangement  inequality that
$$s\sum_{i\in S}d_{i}^{\frac{2k-1}{k-1}}=s\sum_{i\in S}d_{i}^{\frac{k}{k-1}}d_{i}\geq \sum_{i\in S}d_{i}^{\frac{k}{k-1}}\sum_{i\in S}d_{i},$$
equality holds if and only if $d_{i}$ is a constant  for any $i\in S.$
On the other hand, when $p=k,q=\frac{k}{k-1},$ it follows from the H\"{o}lder inequality that
$$s^{\frac{1}{k}}\left(\sum_{i\in S} d_{i}^{\frac{k}{k-1}}\right)^{\frac{k-1}{k}}\geq \sum_{i\in S}d_{i},$$
equality holds if and only if $d_{i}$ is a constant for any $i\in S.$
Therefore, by $0\leq \alpha <1,$ we have
\begin{eqnarray*}
&&\rho_{\alpha}(G)
\\
&\geq & \frac{1}{kn}\left(\alpha\left(s\frac{\sum_{i\in S}d_{i}^{\frac{2k-1}{k-1}}}{\sum_{i\in S}d_{i}^{\frac{k}{k-1}}}-\sum_{i\in S}d_{i}\right)+(1-\alpha)k\left(s^{\frac{1}{k}}\left(\sum_{i\in S}d_{i}^{\frac{k}{k-1}}\right)^{\frac{k-1}{k}}-\sum_{i\in S}d_{i}\right)\right)+\frac{km}{n}
\\
&\geq & \frac{km}{n},
\end {eqnarray*}
which improves the corresponding result in  Lemma~\ref{average degree}.} 
\end{remark}


Taking $S=V(G),$ we obtain the following corollary of Theorem~\ref{main theorem1}.
\begin{corollary}
Let $G$ be a connected $k$-uniform hypergraph on $n$ vertices with $m$ edges $(k\geq 3).$ Then
\begin{eqnarray}
\rho_{\alpha}(G)
\geq \frac{\alpha}{k}\frac{\sum_{i=1}^{n}d_{i}^{\frac{2k-1}{k-1}}}{\sum_{i=1}^{n}d_{i}^{\frac{k}{k-1}}}+(1 -\alpha)\left(\frac{1}{n}\sum_{i=1}^{n}d_{i}^{\frac{k}{k-1}}\right)^{\frac{k-1}{k}}+ \frac{\alpha (k-1)m}{n}.
\end{eqnarray}
\end{corollary}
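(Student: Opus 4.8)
The plan is to specialize Theorem~\ref{main theorem1} to the full vertex set $S=V(G)$, for which $s=|S|=n$, and then reduce the resulting inequality by a routine algebraic simplification. First I would record the only structural fact required: in a $k$-uniform hypergraph with $m$ edges each edge contributes exactly $k$ to the total degree, so that $\sum_{i=1}^{n}d_{i}=\sum_{e\in E(G)}|e|=km$. This identity lets me replace every occurrence of $\sum_{i\in S}d_{i}$ by $km$ once we take $S=V(G)$.

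Next I would substitute $s=n$ and $\sum_{i\in S}d_{i}=km$ into the bound of Theorem~\ref{main theorem1}, obtaining
$$\rho_{\alpha}(G)-\frac{km}{n}\geq \frac{1}{kn}\left(\alpha\left(n\,\frac{\sum_{i=1}^{n}d_{i}^{\frac{2k-1}{k-1}}}{\sum_{i=1}^{n}d_{i}^{\frac{k}{k-1}}}-km\right)+(1-\alpha)k\left(n^{\frac{1}{k}}\left(\sum_{i=1}^{n}d_{i}^{\frac{k}{k-1}}\right)^{\frac{k-1}{k}}-km\right)\right).$$
The remaining work is purely computational: I distribute the factor $\tfrac{1}{kn}$, cancel the $n$ in the first $\alpha$-term to produce $\tfrac{\alpha}{k}\cdot\tfrac{\sum_{i=1}^{n} d_i^{(2k-1)/(k-1)}}{\sum_{i=1}^{n} d_i^{k/(k-1)}}$, and use $n^{1/k}/n=n^{-(k-1)/k}=(1/n)^{(k-1)/k}$ to fold the prefactor of the second term into the power, yielding $(1-\alpha)\big(\tfrac1n\sum_{i=1}^{n} d_i^{k/(k-1)}\big)^{(k-1)/k}$.

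Finally I would collect the three leftover multiples of $m/n$. Moving $km/n$ to the right-hand side and combining it with the contributions $-\alpha m/n$ (from the first summand) and $-(1-\alpha)km/n$ (from the second) gives $\frac{m}{n}\big(k-\alpha-(1-\alpha)k\big)=\frac{\alpha(k-1)m}{n}$, which is precisely the constant appearing in the claimed inequality. No genuine obstacle arises; the only points demanding a little care are the bookkeeping of these $m/n$ terms and the correct handling of the exponent in $n^{1/k}/n$, both of which are routine. I would also remark, following Remark~\ref{inequality}, that the right-hand side dominates $\tfrac{km}{n}$, so this corollary refines Lemma~\ref{average degree}.
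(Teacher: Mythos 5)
Your proposal is correct and matches the paper's approach exactly: the paper obtains this corollary by taking $S=V(G)$ in Theorem~4.4, and your substitution $s=n$, $\sum_{i\in S}d_i=km$ together with the bookkeeping of the $m/n$ terms yields precisely the stated bound. The algebra checks out, including the exponent manipulation $n^{1/k}/n=n^{-(k-1)/k}$ and the collection $\frac{m}{n}\bigl(k-\alpha-(1-\alpha)k\bigr)=\frac{\alpha(k-1)m}{n}$.
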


\begin{remark}\label{Liu's result}
{\rm Let $G$ be a $k$-uniform hypergraph on $n$ vertices ($k\geq 3$) and $\widetilde{G}:=G\times K_{k}^{k},$
Kang, Liu and Shan in \cite[Claim~4]{Kang Liu Shan 2018} proved that $\rho_{0}(\widetilde{G})=(k-1)!\rho_{0}(G).$
According to the proof of Theorem~\ref{main theorem1} and Lemma~\ref{Perron Frobenius},
we have
$$\rho({\mathcal{A}}(G))=\rho_{0}(G)\geq \left(\frac{1}{n}\sum_{i=1}^{n}d_{i}^{\frac{k}{k-1}}\right)^{\frac{k-1}{k}}.$$
If $G$ is connected, equality holds if and only if $G$ is regular, which was proved in \cite[Theorem~2]{Kang Liu Shan 2018}.}
\end{remark}

Taking $S$  be a pair of vertices with distinct degrees in Theorem~\ref{main theorem1}, we obtain the following corollary. 
\begin{corollary}\label{two distinct vertices}
Let $G$ be a connected $k$-uniform hypergraph on $n$ vertices with $m$ edges $(k\geq 3)$, suppose $i$ and $j$ are vertices of $G$ and $d_{i}>d_{j}.$ Then
\begin{eqnarray*}
&&\rho_{\alpha}(G)- \frac{km}{n}
\\
&\geq& \frac{\alpha}{cn}\left(2\frac{d_{i}^{\frac{2k-1}{k-1}}+d_{j}^{\frac{2k-1}{k-1}}}{d_{i}^{\frac{k}{k-1}}+d_{j}^{\frac{k}{k-1}}}- (d_{i}+d_{j})\right)+\frac{(1-\alpha)k}{cn}\left(2^{\frac{1}{k}}\left(d_{i}^{\frac{k}{k-1}}+d_{j}^{\frac{k}{k-1}}\right)^{\frac{k-1}{k}}-(d_{i}+d_{j})\right)
\\
&>& \frac{\alpha}{cn}\left(\frac{d_{i}^{\frac{2k-1}{k-1}}+d_{j}^{\frac{2k-1}{k-1}}}{d_{i}^{\frac{k}{k-1}}}- (d_{i}+d_{j})\right)+\frac{(1-\alpha)k}{cn}\left(2^{\frac{1}{k}}\left(d_{i}^{\frac{k}{k-1}}+d_{j}^{\frac{k}{k-1}}\right)^{\frac{k-1}{k}}-(d_{i}+d_{j})\right),
\end{eqnarray*}
where $c=1$ if $i$ and $j$ are not adjacent and $c=k$ if $i$ and $j$ are adjacent.
\end{corollary}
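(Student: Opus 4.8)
The plan is to specialize the two preceding estimates to the two-element set $S=\{i,j\}$ and then sharpen the resulting $\alpha$-term by invoking $d_i>d_j$.

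First I would split according to whether $i$ and $j$ are adjacent, since this determines which ambient inequality applies and hence the constant $c$. If $i$ and $j$ are not adjacent, then no edge of $G$ contains both of them, so every edge meets $S$ in at most one vertex; that is, $S$ is a strong independent set, and Lemma~\ref{main lemma1} applies directly with the prefactor $\frac{1}{n}$, giving $c=1$. If instead $i\sim j$, then $S$ fails to be strong independent, so I would invoke Theorem~\ref{main theorem1} (which is where connectedness of $G$ is used), carrying the prefactor $\frac{1}{kn}$ and yielding $c=k$. In either case, substituting $s=|S|=2$ together with $\sum_{v\in S}d_v=d_i+d_j$, $\sum_{v\in S}d_v^{\frac{k}{k-1}}=d_i^{\frac{k}{k-1}}+d_j^{\frac{k}{k-1}}$, and $\sum_{v\in S}d_v^{\frac{2k-1}{k-1}}=d_i^{\frac{2k-1}{k-1}}+d_j^{\frac{2k-1}{k-1}}$ into the bound produces exactly the first displayed inequality.

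For the strict inequality on the second line, I would note that the $(1-\alpha)$-summand is \emph{identical} in both expressions, so it suffices to compare the two $\alpha$-fractions. Concretely, the claim reduces to
$$2\,\frac{d_i^{\frac{2k-1}{k-1}}+d_j^{\frac{2k-1}{k-1}}}{d_i^{\frac{k}{k-1}}+d_j^{\frac{k}{k-1}}}>\frac{d_i^{\frac{2k-1}{k-1}}+d_j^{\frac{2k-1}{k-1}}}{d_i^{\frac{k}{k-1}}}.$$
Cancelling the common positive factor $d_i^{\frac{2k-1}{k-1}}+d_j^{\frac{2k-1}{k-1}}$, this is equivalent to $2d_i^{\frac{k}{k-1}}>d_i^{\frac{k}{k-1}}+d_j^{\frac{k}{k-1}}$, i.e. $d_i^{\frac{k}{k-1}}>d_j^{\frac{k}{k-1}}$, which holds because $x\mapsto x^{\frac{k}{k-1}}$ is strictly increasing on $(0,\infty)$ and $d_i>d_j$.

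There is no genuine analytic obstacle here; the points requiring care are purely bookkeeping. The first is selecting the correct ambient result in each adjacency case so that $c\in\{1,k\}$ comes out right — this is exactly where the adjacency dichotomy interacts with the hypothesis that $i,j$ are distinct. The second is that the strictness of the final inequality is driven entirely by $d_i>d_j$ through the elementary step above; in particular it is strict precisely when $\alpha>0$, whereas for $\alpha=0$ both $\alpha$-terms vanish and the two lines coincide.
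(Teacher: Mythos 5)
Your proposal is correct and follows exactly the route the paper intends: specialize $S=\{i,j\}$, using Lemma~\ref{main lemma1} when $i\not\sim j$ (so $c=1$) and Theorem~\ref{main theorem1} when $i\sim j$ (so $c=k$), and then reduce the strict second inequality to $d_i^{\frac{k}{k-1}}>d_j^{\frac{k}{k-1}}$. Your observation that the final inequality degenerates to equality when $\alpha=0$ is a fair (minor) caveat about the statement as printed, but it does not affect the substance of the argument.
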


The next result is an immediate consequence of Corollary~\ref{two distinct vertices}.
\begin{corollary}\label{first maximum and minimum degree}
Let $G$ be a connected $k$-uniform hypergraph on $n$ vertices with $m$ edges $(k\geq 3)$, suppose $d_{i}=\Delta, d_{j}=\delta,
 i\neq j$, and $i,j\in V(G).$ Then
\begin{eqnarray}
&&\rho_{\alpha}(G)- \frac{km}{n} \nonumber
\\
&\geq& \frac{\alpha}{cn}\left(2\frac{\Delta^{\frac{2k-1}{k-1}}+\delta^{\frac{2k-1}{k-1}}}{\Delta^{\frac{k}{k-1}}+\delta^{\frac{k}{k-1}}}- (\Delta+\delta)\right)+\frac{(1-\alpha)k}{cn}\left(2^{\frac{1}{k}}\left(\Delta^{\frac{k}{k-1}}+\delta^{\frac{k}{k-1}}\right)^{\frac{k-1}{k}}-(\Delta+\delta)\right)
\\
&\geq& \frac{\alpha}{cn}\left(\frac{\Delta^{\frac{2k-1}{k-1}}+\delta^{\frac{2k-1}{k-1}}}{\Delta^{\frac{k}{k-1}}}- (\Delta+\delta)\right)+\frac{(1-\alpha)k}{cn}\left(2^{\frac{1}{k}}\left(\Delta^{\frac{k}{k-1}}+\delta^{\frac{k}{k-1}}\right)^{\frac{k-1}{k}}-(\Delta+\delta)\right) \nonumber
\end{eqnarray}
where $c=1$ if $i$ and $j$ are not adjacent and $c=k$ if $i$ and $j$ are adjacent.
\end{corollary}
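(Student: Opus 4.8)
The plan is to derive Corollary~\ref{first maximum and minimum degree} as a direct specialization of Corollary~\ref{two distinct vertices}. First I would fix $i$ to be a vertex of $G$ realizing the maximum degree $\Delta$ and $j$ to be a vertex realizing the minimum degree $\delta$, with $i\neq j$; such a pair always exists. The whole argument then splits according to whether $\Delta>\delta$ or $\Delta=\delta$.

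In the case $\Delta>\delta$ the hypothesis $d_{i}>d_{j}$ of Corollary~\ref{two distinct vertices} is satisfied verbatim, so I would simply substitute $d_{i}=\Delta$ and $d_{j}=\delta$ into the two displayed inequalities of that corollary. The first inequality of Corollary~\ref{two distinct vertices} becomes, after this substitution, exactly the first (labeled) inequality of Corollary~\ref{first maximum and minimum degree}, and the strict second inequality there yields the second inequality here once one weakens $>$ to $\geq$. The value of the constant $c$ (equal to $1$ if $i,j$ are non-adjacent and $k$ if they are adjacent) is inherited unchanged. Thus in this case nothing beyond substitution is required.

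It remains to treat the degenerate case $\Delta=\delta$, that is, $G$ being $d$-regular, which falls outside the scope of Corollary~\ref{two distinct vertices} because its hypothesis demands $d_{i}>d_{j}$. Here I would argue directly: by Lemma~\ref{average degree} a regular $G$ satisfies $\rho_{\alpha}(G)=\frac{km}{n}$, so the left-hand side vanishes. On the right-hand side, setting $\Delta=\delta=d$ and using the exponent identity $\frac{2k-1}{k-1}-\frac{k}{k-1}=1$, one checks that the bracketed $\alpha$-term collapses to $2d-2d=0$, while the factor $(2d^{k/(k-1)})^{(k-1)/k}=2^{(k-1)/k}d$ makes the $(1-\alpha)$-term equal to $2^{1/k}2^{(k-1)/k}d-2d=2d-2d=0$. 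Hence all three quantities in the chain equal $0$ and every inequality holds with equality.

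The argument carries no genuine obstacle: the only point meriting attention is that Corollary~\ref{two distinct vertices} is stated with the strict hypothesis $d_{i}>d_{j}$, so the regular case must be disposed of separately, and there the verification is exactly the routine exponent computation indicated above.
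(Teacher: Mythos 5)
Your proposal is correct and follows the paper's own route: the paper derives this corollary as an immediate specialization of Corollary~\ref{two distinct vertices} with $d_i=\Delta$, $d_j=\delta$. Your separate treatment of the regular case $\Delta=\delta$ (where the strict hypothesis $d_i>d_j$ fails and all quantities in the chain vanish) is a small but worthwhile point of rigor that the paper leaves implicit.
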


Taking $S$ be a maximum weak independent set  in Theorem~\ref{main theorem1}, we obtain the following corollary.
\begin{corollary}\label{weak independence number}
Let $G$ be a connected $k$-uniform hypergraph on $|V(G)|=n$ vertices with $|E(G)|=m$ edges $(k\geq 3)$, and $S$ be a maximum weak independent set of $G.$ Then
\begin{eqnarray*}
&&\rho_{\alpha}(G)- \frac{km}{n}\\
&\geq & \frac{\alpha}{kn}\left(\alpha(G)\frac{\sum_{i\in S}d_{i}^{\frac{2k-1}{k-1}}}{\sum_{i\in S}d_{i}^{\frac{k}{k-1}}}-\sum_{i\in S}d_{i}\right)+\frac{(1-\alpha)}{n}\left(\alpha(G)^{\frac{1}{k}}\left(\sum_{i\in S}d_{i}^{\frac{k}{k-1}}\right)^{\frac{k-1}{k}}-\sum_{i\in S}d_{i}\right).
\end{eqnarray*}
\end{corollary}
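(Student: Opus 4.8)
The plan is to apply Theorem~\ref{main theorem1} directly, since that theorem is stated for an \emph{arbitrary} subset $S$ of $V(G)$ with $|S|=s$ and imposes no independence hypothesis on $S$. A maximum weak independent set is in particular such a subset, and by definition of the weak independence number its cardinality is $s=\alpha(G)$. Thus the corollary should follow from a single substitution together with a routine simplification of the leading constants.

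Concretely, I would first substitute $s=\alpha(G)$ into the right-hand side of the bound in Theorem~\ref{main theorem1}, giving
$$\rho_{\alpha}(G)-\frac{km}{n}\geq\frac{1}{kn}\left(\alpha\left(\alpha(G)\frac{\sum_{i\in S}d_{i}^{\frac{2k-1}{k-1}}}{\sum_{i\in S}d_{i}^{\frac{k}{k-1}}}-\sum_{i\in S}d_{i}\right)+(1-\alpha)k\left(\alpha(G)^{\frac{1}{k}}\left(\sum_{i\in S}d_{i}^{\frac{k}{k-1}}\right)^{\frac{k-1}{k}}-\sum_{i\in S}d_{i}\right)\right).$$
Then I would distribute the prefactor $\frac{1}{kn}$ across the two summands: the first carries the coefficient $\frac{\alpha}{kn}$ in front of its parenthetical, while the second carries $\frac{(1-\alpha)k}{kn}$, which collapses to $\frac{1-\alpha}{n}$ once the factor $k$ cancels against the $k$ in the denominator. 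This produces precisely the asymmetric coefficient pair $\tfrac{\alpha}{kn}$ and $\tfrac{1-\alpha}{n}$ appearing in the statement, completing the derivation.

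Since every ingredient is already supplied by Theorem~\ref{main theorem1} and the definition of $\alpha(G)$, there is no substantive obstacle in this argument; it is purely a matter of specialization. The only point requiring a moment of care is the cancellation $\frac{(1-\alpha)k}{kn}=\frac{1-\alpha}{n}$, which explains why the coefficient of the Hölder-type term differs from the $\tfrac{1}{k}$-scaled coefficient of the rearrangement-type term. I would also remark that connectedness and $k\geq 3$ are inherited verbatim from the hypotheses of Theorem~\ref{main theorem1}, so no additional justification is needed.
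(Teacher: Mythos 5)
Your proposal is correct and is exactly the paper's route: the corollary is obtained by specializing Theorem~\ref{main theorem1} to a maximum weak independent set $S$ with $s=|S|=\alpha(G)$, and the coefficients $\tfrac{\alpha}{kn}$ and $\tfrac{1-\alpha}{n}$ arise precisely from distributing the prefactor $\tfrac{1}{kn}$ as you describe. Nothing further is needed.
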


Berge \cite{Berge 1973} proved that  every $k$-uniform hypergraph $G$ on $n$ vertices satisfying that $\chi(G)\alpha(G)\geq n.$
According to Corollary~\ref{weak independence number}, we have the following result.

\begin{corollary}
Let $G$ be a connected $k$-uniform hypergraph on $n$ vertices with $m$ edges $(k\geq 3)$, $\chi(G) $ is the weak chromatic number of $G,$ and $S$ be a maximum  weak independent set.  Then
\begin{eqnarray*}
&&\rho_{\alpha}(G)- \frac{km}{n}\\
&\geq & \frac{\alpha}{kn}\left(\frac{n}{\chi(G)}\frac{\sum_{i\in S}d_{i}^{\frac{2k-1}{k-1}}}{\sum_{i\in S}d_{i}^{\frac{k}{k-1}}}-\sum_{i\in S}d_{i}\right)+\frac{(1-\alpha)}{n}\left(\left(\frac{n}{\chi(G)}\right)^{\frac{1}{k}}\left(\sum_{i\in S}d_{i}^{\frac{k}{k-1}}\right)^{\frac{k-1}{k}}-\sum_{i\in S}d_{i}\right).
\end{eqnarray*}
\end{corollary}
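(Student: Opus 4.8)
The plan is to obtain this bound as a direct consequence of Corollary~\ref{weak independence number} combined with the Berge inequality quoted just above the statement. First I would apply Corollary~\ref{weak independence number} with the same maximum weak independent set $S$, which already gives
$$\rho_{\alpha}(G)-\frac{km}{n}\geq \frac{\alpha}{kn}\left(\alpha(G)A-B\right)+\frac{1-\alpha}{n}\left(\alpha(G)^{\frac{1}{k}}C-B\right),$$
where I abbreviate $A=\frac{\sum_{i\in S}d_{i}^{\frac{2k-1}{k-1}}}{\sum_{i\in S}d_{i}^{\frac{k}{k-1}}}$, $B=\sum_{i\in S}d_{i}$, and $C=\left(\sum_{i\in S}d_{i}^{\frac{k}{k-1}}\right)^{\frac{k-1}{k}}$. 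Since $G$ is connected and nontrivial, every vertex has degree at least $1$, and since any single vertex is a weak independent set (as $1\leq k-1$ for $k\geq 3$), the set $S$ is nonempty; hence the denominator $\sum_{i\in S}d_{i}^{\frac{k}{k-1}}$ is positive, so $A$ and $C$ are well defined and strictly positive.

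Next I would invoke the Berge bound, $\chi(G)\alpha(G)\geq n$, which rearranges to $\alpha(G)\geq \tfrac{n}{\chi(G)}$. The key observation is that the right-hand side displayed above is a nondecreasing function of the quantity $\alpha(G)$: the coefficients $\frac{\alpha}{kn}A$ and $\frac{1-\alpha}{n}C$ are nonnegative (using $0\leq\alpha<1$ together with $A,C>0$), while the maps $t\mapsto t$ and $t\mapsto t^{\frac{1}{k}}$ are both increasing on $[0,\infty)$. Consequently, replacing $\alpha(G)$ by the smaller value $\tfrac{n}{\chi(G)}$ can only decrease the right-hand side, so
$$\frac{\alpha}{kn}\left(\alpha(G)A-B\right)+\frac{1-\alpha}{n}\left(\alpha(G)^{\frac{1}{k}}C-B\right)\geq \frac{\alpha}{kn}\left(\frac{n}{\chi(G)}A-B\right)+\frac{1-\alpha}{n}\left(\left(\frac{n}{\chi(G)}\right)^{\frac{1}{k}}C-B\right).$$
Chaining this with the inequality coming from Corollary~\ref{weak independence number} then yields exactly the asserted bound, after unwinding the abbreviations $A$, $B$, $C$.

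There is essentially no hard step here; the argument is a single monotonicity substitution. The only point that warrants a moment's care is the \emph{direction} of the substitution: because we are lower-bounding $\rho_{\alpha}(G)-\frac{km}{n}$ and we wish to replace $\alpha(G)$ by the smaller quantity $\tfrac{n}{\chi(G)}$, I must confirm that the bound is increasing in $\alpha(G)$ so that the smaller input still produces a valid (if weaker) lower bound. That monotonicity is precisely what the nonnegativity of $A$ and $C$ guarantees, and it is worth stressing that the same set $S$ is used throughout, so that $A$, $B$, and $C$ are unaffected by the substitution of $\alpha(G)$.
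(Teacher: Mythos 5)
Your proof is correct and follows essentially the same route as the paper: the paper likewise derives this corollary by combining Corollary on the weak independence number with Berge's inequality $\chi(G)\alpha(G)\geq n$, replacing $\alpha(G)$ by the smaller quantity $n/\chi(G)$. Your explicit verification of the monotonicity of the bound in $\alpha(G)$ (via the positivity of the coefficients) is the one detail the paper leaves implicit, and it is handled correctly.
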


For a $k$-uniform hypergraph $G,$  notice that a maximum weak  independence set of $G$ is a  maximum clique of $\widetilde{G},$ and  there is a relation \cite{Xie Qi 2015} between clique number $\omega(\overline{G})$ and weak independence number $\alpha(G):$ $\alpha(G)=\omega(\overline{G}).$

\begin{corollary}
Let $G$ be a connected $k$-uniform hypergraph on $n$ vertices with $m$ edges $(k\geq 3)$,
$\omega(\overline{G})$ is the clique number of $\overline{G},$
and $S$ be a maximum clique of $\overline{G}.$
Then
\begin{eqnarray*}
&&\rho_{\alpha}(G)- \frac{km}{n}\\
&\geq & \frac{\alpha}{kn}\left(\omega(\overline{G})\frac{\sum_{i\in S}d_{i}^{\frac{2k-1}{k-1}}}{\sum_{i\in S}d_{i}^{\frac{k}{k-1}}}-\sum_{i\in S}d_{i}\right)+\frac{(1-\alpha)}{n}\left(\omega(\overline{G})^{\frac{1}{k}}\left(\sum_{i\in S}d_{i}^{\frac{k}{k-1}}\right)^{\frac{k-1}{k}}-\sum_{i\in S}d_{i}\right).
\end{eqnarray*}
\end{corollary}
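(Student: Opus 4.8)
The plan is to obtain this statement as an immediate consequence of Corollary~\ref{weak independence number}, the only new ingredient being the combinatorial identification of cliques in $\overline{G}$ with weak independent sets in $G$. Once that identification is in place, nothing beyond a substitution is required.

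First I would unwind the definition of the complement. By construction, the edge set $E(\overline{G})$ consists precisely of those $k$-subsets of $V(G)$ that do not lie in $E(G)$. A set $S\subseteq V(G)$ is a clique of $\overline{G}$ exactly when every $k$-element subset of $S$ is an edge of $\overline{G}$; by the previous sentence this is equivalent to the requirement that no $k$-element subset of $S$ is an edge of $G$. That, however, is precisely the defining property of a weak independent set of $G$. Hence the family of cliques of $\overline{G}$ and the family of weak independent sets of $G$ coincide as collections of vertex sets; in particular $\alpha(G)=\omega(\overline{G})$, the relation already recorded via \cite{Xie Qi 2015}, and any maximum clique $S$ of $\overline{G}$ is simultaneously a maximum weak independent set of $G$. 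With this identification I would then apply Corollary~\ref{weak independence number} to $G$ with the chosen set $S$: that corollary yields exactly the displayed inequality but written in terms of $\alpha(G)$, and replacing $\alpha(G)$ by $\omega(\overline{G})$ turns it verbatim into the desired bound.

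There is no substantive obstacle here; the entire content reduces to the definitional equivalence between cliques of the complement and weak independent sets of $G$, which is routine once the complement is unpacked. The only point I would state with care is that this equivalence holds at the level of the vertex sets themselves, not merely of their maximum cardinalities, so that the very same $S$ can be supplied as input to Corollary~\ref{weak independence number}. Granting that, the proof is a one-line substitution.
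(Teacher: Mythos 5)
Your proposal is correct and matches the paper's own (implicit) argument: the paper likewise justifies this corollary purely by the observation that a maximum clique of $\overline{G}$ is exactly a maximum weak independent set of $G$, together with the relation $\alpha(G)=\omega(\overline{G})$ from \cite{Xie Qi 2015}, and then invokes Corollary~\ref{weak independence number}. Your extra care in checking that the equivalence holds at the level of the vertex sets themselves (so the same $S$ can be fed into that corollary) is exactly the right point to make explicit.
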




Taking $S$  be a minimum  vertex cut  in Theorem~\ref{main theorem1}, we obtain the following corollary.
\begin{corollary}
Let $G$ be a connected $k$-uniform hypergraph on $n$ vertices with $m$ edges $(k\geq 3)$, and $S$ be a minimum vertex cut of $G.$ Then
\begin{eqnarray*}
&&\rho_{\alpha}(G)- \frac{km}{n}\\
&\geq& \frac{\alpha}{cn}\left(\nu(G)\frac{\sum_{i\in S}d_{i}^{\frac{2k-1}{k-1}}}{\sum_{i\in S}d_{i}^{\frac{k}{k-1}}}-\sum_{i\in S}d_{i}\right)+\frac{(1-\alpha)k}{cn}\left(\nu(G)^{\frac{1}{k}}\left(\sum_{i\in S}d_{i}^{\frac{k}{k-1}}\right)^{\frac{k-1}{k}}-\sum_{i\in S}d_{i}\right),
\end{eqnarray*}
where $c=1$ if $S$ is a strong independent set of $G,$ and $c=k$ otherwise.
\end{corollary}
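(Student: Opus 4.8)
The plan is to specialize the two general degree bounds already established, namely Lemma~\ref{main lemma1} and Theorem~\ref{main theorem1}, to the situation where $S$ is a minimum vertex cut. The only genuine content is the case distinction encoded by the constant $c$, which records whether or not the chosen set $S$ happens to be strong independent.

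First I would record that, since $S$ is a minimum vertex cut, its cardinality is $s=|S|=\nu(G)$ by definition of the vertex connectivity. I would then split the argument into two cases according to whether $S$ is a strong independent set of $G$.

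In the first case, when $S$ is strong independent, I would apply Lemma~\ref{main lemma1} directly. Substituting $s=\nu(G)$ into its conclusion reproduces the asserted inequality with prefactor $\tfrac1n$, that is, with $c=1$: the two summands of the lemma are exactly $\tfrac{\alpha}{n}\bigl(\nu(G)\tfrac{\sum_{i\in S}d_i^{(2k-1)/(k-1)}}{\sum_{i\in S}d_i^{k/(k-1)}}-\sum_{i\in S}d_i\bigr)$ and $\tfrac{(1-\alpha)k}{n}\bigl(\nu(G)^{1/k}(\sum_{i\in S}d_i^{k/(k-1)})^{(k-1)/k}-\sum_{i\in S}d_i\bigr)$. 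In the second case, when $S$ is not strong independent, Lemma~\ref{main lemma1} is unavailable, but Theorem~\ref{main theorem1} applies to an arbitrary subset $S\subseteq V(G)$. Substituting $s=\nu(G)$ into Theorem~\ref{main theorem1} yields the same expression carrying the additional factor $\tfrac1k$ in the denominator, that is, with $c=k$. Writing both cases uniformly with the single symbol $c\in\{1,k\}$ as described then gives precisely the stated bound.

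I do not expect any real obstacle here, as the corollary is a direct specialization rather than a new estimate. The one point demanding care is bookkeeping: matching the corollary's prefactors $\tfrac{\alpha}{cn}$ and $\tfrac{(1-\alpha)k}{cn}$ against the grouping of terms in the two source results, which differ by exactly the factor $k$ separating the strong-independent case ($c=1$, from Lemma~\ref{main lemma1}) from the general case ($c=k$, from Theorem~\ref{main theorem1}). Once this correspondence is checked, the proof is complete.
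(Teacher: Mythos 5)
Your proposal is correct and matches the paper's intended derivation: the corollary is obtained by substituting $s=|S|=\nu(G)$ into Lemma~\ref{main lemma1} when the minimum vertex cut $S$ happens to be strong independent (giving $c=1$) and into Theorem~\ref{main theorem1} otherwise (giving $c=k$), exactly as you describe. The bookkeeping of the prefactors $\frac{\alpha}{cn}$ and $\frac{(1-\alpha)k}{cn}$ checks out in both cases.
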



\begin{lemma}\label{square lemma}
Let $G$ be a 
$k$-uniform hypergraph on $n$ vertices with $m$ edges, and $S$ be a strong independent set of $G$ with $|S|=s.$ Then
$$\rho_{\alpha}(G)- \frac{km}{n}\geq \frac{1}{n}\left(\alpha\sum_{i\in S}d_{i}\left(\left(\frac{sd_{i}^{\frac{k}{k-1}}}{\sum_{i\in S}d_{i}^{\frac{k}{k-1}}}\right)^{\frac{1}{2}}-1\right)^{2}+k\left(s^{\frac{1}{k}}\left(\sum_{i\in S}d_{i}^{\frac{k}{k-1}}\right)^{\frac{k-1}{k}}-\sum_{i\in S}d_{i}\right)\right).$$
\end{lemma}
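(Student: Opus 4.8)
The plan is to deduce this bound directly from Lemma~\ref{main lemma1}, by showing that the asserted right-hand side never exceeds the one already established there; the entire content then reduces to a single one-variable inequality. First I would reintroduce the H\"{o}lder-optimal test values used in that lemma: writing $D=\sum_{i\in S}d_i^{k/(k-1)}$ and $a_i=(s/D)^{1/k}d_i^{1/(k-1)}$, one has $a_i^k=sd_i^{k/(k-1)}/D$ and hence $a_i^{k/2}=\sqrt{sd_i^{k/(k-1)}/D}$, which is precisely the quantity appearing under the square in the statement. Since $\sum_{i\in S}d_ia_i^k=s\sum_{i\in S}d_i^{(2k-1)/(k-1)}/D$ and $\sum_{i\in S}d_ia_i=s^{1/k}D^{(k-1)/k}$, the conclusion of Lemma~\ref{main lemma1} can be rewritten as
$$\rho_\alpha(G)-\frac{km}{n}\ \ge\ \frac{1}{n}\Big(\alpha\sum_{i\in S}d_i(a_i^k-1)+(1-\alpha)k\sum_{i\in S}d_i(a_i-1)\Big).$$

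Next I would recombine the two weighted sums. Setting $P=\sum_{i\in S}d_i(a_i^k-1)$ and $Q=\sum_{i\in S}d_i(a_i-1)$, the elementary identity $\alpha P+(1-\alpha)kQ=\alpha(P-kQ)+kQ$ isolates the full term $kQ$, which is exactly $k\big(s^{1/k}D^{(k-1)/k}-\sum_{i\in S}d_i\big)$, the second summand of the claim. What remains is
$$P-kQ=\sum_{i\in S}d_i\big(a_i^k-ka_i+k-1\big).$$
Because $d_i\ge0$ and $\alpha\ge0$, the theorem follows once each bracket $a_i^k-ka_i+k-1$ is bounded below by $(a_i^{k/2}-1)^2$, since summing $\alpha d_i$ times that estimate converts $\alpha(P-kQ)$ into $\alpha\sum_{i\in S}d_i(a_i^{k/2}-1)^2=\alpha\sum_{i\in S}d_i\big(\sqrt{sd_i^{k/(k-1)}/D}-1\big)^2$, the first summand of the claim.

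The crux---and the only step that is not bookkeeping---is therefore the pointwise inequality
$$a^k-ka+k-1\ \ge\ (a^{k/2}-1)^2\qquad(a\ge0),$$
which, after expanding $(a^{k/2}-1)^2=a^k-2a^{k/2}+1$, is equivalent to $2a^{k/2}\ge ka-(k-2)$. I would prove this by the supporting-line (tangent) inequality for the function $a\mapsto a^{k/2}$, which is convex precisely because $k\ge2$: its tangent at $a=1$ is $1+\tfrac{k}{2}(a-1)$, so $a^{k/2}\ge \tfrac{k}{2}a+1-\tfrac{k}{2}$, and doubling yields the desired bound, with equality iff $a=1$ (i.e.\ iff the degrees on $S$ are constant). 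Adding $kQ$ back and dividing by $n$ then gives exactly the stated inequality. The expected difficulty is only conceptual: spotting the recombination $\alpha P+(1-\alpha)kQ=\alpha(P-kQ)+kQ$ and recognizing $a_i^k-ka_i+k-1$ as dominating the square $(a_i^{k/2}-1)^2$; once the one-variable convexity estimate is in place, the rest is substitution, and it also shows the present bound is a (cleaner but weaker) consequence of Lemma~\ref{main lemma1}.
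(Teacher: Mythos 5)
Your argument is correct, but it is genuinely different from the paper's. The paper reruns the Rayleigh-quotient computation with the same test vector as in Lemma~\ref{main lemma1}: it rewrites $\alpha\sum_i d_ix_i^k+(1-\alpha)k\sum_e x^e$ as $\alpha\bigl(\sum_e(x_{i_1}^k+\cdots+x_{i_k}^k)-k\sum_e x^e\bigr)+k\sum_e x^e$, bounds the bracket edge-by-edge using the refined AM--GM inequality of Lemma~\ref{square inequality} (retaining, for each edge meeting $S$, only the $k-1$ pairs $\{i,j\}$ with $i\in S$, $j\notin S$, each contributing $\frac{1}{n}(a_i^{k/2}-1)^2$), and then substitutes the H\"{o}lder-optimal $a_i$. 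You instead observe that the claimed bound is dominated by the already-proven bound of Lemma~\ref{main lemma1}, via the recombination $\alpha P+(1-\alpha)kQ=\alpha(P-kQ)+kQ$ together with the scalar estimate $a^k-ka+k-1\ge(a^{k/2}-1)^2$, which you correctly reduce to the tangent-line inequality $a^{k/2}\ge 1+\tfrac{k}{2}(a-1)$ for the convex map $a\mapsto a^{k/2}$ (valid for $k\ge 2$ and $a\ge 0$, with equality iff $a=1$); your bookkeeping of $P$, $Q$, and $a_i^{k/2}=\bigl(sd_i^{k/(k-1)}/\sum_{j\in S}d_j^{k/(k-1)}\bigr)^{1/2}$ matches the statement exactly, including the fact that the second summand carries $k$ rather than $(1-\alpha)k$. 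Your route is shorter, needs no new Rayleigh computation and no appeal to Lemma~\ref{square inequality}, and makes the paper's later remark --- that the bound of Theorem~\ref{main theorem1} is at least as good as that of Theorem~\ref{main theorem2} --- an immediate byproduct rather than a separate observation. What the paper's route retains is the intermediate inequality $\rho_\alpha(G)-\tfrac{km}{n}\ge\tfrac1n\bigl(\alpha\sum_{i\in S}d_i(a_i^{k/2}-1)^2+k(\sum_{i\in S}d_ia_i-\sum_{i\in S}d_i)\bigr)$ for an arbitrary admissible choice of the $a_i$, not only the H\"{o}lder-optimal one, though this extra freedom is not exploited.
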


\begin{proof}
Let $s=|S|$ denote the number of vertices in $S.$ Taking $x_{i}=\frac{a_{i}}{\sqrt[k]{n}}$ for $i \in S$ and $x_{i}=\frac{1}{\sqrt[k]{n}}$
when $i\notin S.$ We have $\parallel x\parallel_{k}=1$ when $\sum_{i\in S}a_{i}^{k}=s.$ Since  $S$ is a strong independent set, thus we have $|e\cap S|\leq 1$ for each edge $e\in E(G).$ Then we have
\begin{eqnarray*}
&&\rho_{\alpha}(G)-\frac{km}{n}
\\
&\geq & \alpha \sum_{i\in V(G)}d_{i}x_{i}^{k}+(1-\alpha)k \sum_{e\in E(G)} x^{e}-\frac{km}{n}
\\
&=& \alpha \sum_{\{i_{1},i_{2},\cdots,i_{k}\}\in E(G)}\left(x_{i_{1}}^{k}+x_{i_{2}}^{k}+\cdots+x_{i_{k}}^{k}\right)+ (1-\alpha)k \sum_{\{i_{1},i_{2},\cdots,i_{k}\}\in E(G)}x_{i_{1}}x_{i_{2}}\cdots x_{i_{k}}-\frac{km}{n}
\\
&=& \alpha \left(\sum_{\{i_{1},i_{2},\cdots,i_{k}\}\in E(G)}\left(x_{i_{1}}^{k}+x_{i_{2}}^{k}+\cdots+x_{i_{k}}^{k}\right)-k \sum_{\{i_{1},i_{2},\cdots,i_{k}\}\in E(G)}x_{i_{1}}x_{i_{2}}\cdots x_{i_{k}}\right)
\\
&&+ k \sum_{\{i_{1},i_{2},\cdots,i_{k}\}\in E(G)}x_{i_{1}}x_{i_{2}}\cdots x_{i_{k}}-\frac{km}{n}.
\end{eqnarray*}
Using  Lemma~\ref{square inequality}, we have
\begin{eqnarray*}
&&\rho_{\alpha}(G)-\frac{km}{n}
\\
&\geq & \alpha k \left(\sum_{\{i_{1},i_{2},\cdots,i_{k}\}\in E(G)}\left(\frac{x_{i_{1}}^{k}+x_{i_{2}}^{k}+\cdots+x_{i_{k}}^{k}}{k}-x_{i_{1}}x_{i_{2}}\cdots x_{i_{k}}\right)\right)
\\
&&+ k \sum_{\{i_{1},i_{2},\cdots,i_{k}\}\in E(G)}x_{i_{1}}x_{i_{2}}\cdots x_{i_{k}}-\frac{km}{n}
\\
&\geq &\alpha k\frac{1}{k(k-1)}\sum_{e\in E(G)}\sum_{\{i,j\}\subseteq e}\left(x_{i}^{\frac{k}{2}}-x_{j}^{\frac{k}{2}}\right)^{2}+ k \sum_{\{i_{1},i_{2},\cdots,i_{k}\}\in E(G)}x_{i_{1}}x_{i_{2}}\cdots x_{i_{k}}-\frac{km}{n}
\\
&= &\alpha \frac{1}{k-1}\sum_{e\in E(G)}\sum_{\{i,j\}\subseteq e}\left(x_{i}^{\frac{k}{2}}-x_{j}^{\frac{k}{2}}\right)^{2}+ k \sum_{\{i_{1},i_{2},\cdots,i_{k}\}\in E(G)}x_{i_{1}}x_{i_{2}}\cdots x_{i_{k}}-\frac{km}{n}
\\
&\geq &\frac{1}{n}\left(\alpha\sum_{i\in S}d_{i}\left(a_{i}^{\frac{k}{2}}-1\right)^{2}+k\left(\sum_{i\in S} d_{i}a_{i}-\sum_{i\in S}d_{i} \right)\right).
\end{eqnarray*}
Then by the similar arguments in Lemma~\ref{main lemma1}, we obtain the desired result.
\end{proof}

\begin{theorem}\label{main theorem2}
Let $G$ be a connected $k$-uniform hypergraph on $n$ vertices with $m$ edges $(k\geq 3),$ and $S$ be a subset of $V(G)$ with $|S|=s.$ Then
\begin{eqnarray*}
&&\rho_{\alpha}(G)- \frac{km}{n}
\\
&\geq& \frac{1}{kn}\left(\alpha\sum_{i\in S}d_{i}\left(\left(\frac{sd_{i}^{\frac{k}{k-1}}}{\sum_{i\in S}d_{i}^{\frac{k}{k-1}}}\right)^{\frac{1}{2}}-1\right)^{2}+k\left(s^{\frac{1}{k}}\left(\sum_{i\in S}d_{i}^{\frac{k}{k-1}}\right)^{\frac{k-1}{k}}-\sum_{i\in S}d_{i}\right)\right).
\end{eqnarray*}
\end{theorem}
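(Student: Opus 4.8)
The plan is to mirror the two-case argument used for Theorem~\ref{main theorem1}, but invoking Lemma~\ref{square lemma} in place of Lemma~\ref{main lemma1}. \textbf{Case 1: $S$ is strong independent.} First I would observe that if $S$ is already a strong independent set of $G$, then Lemma~\ref{square lemma} applies directly and yields the bound with prefactor $\frac{1}{n}$ rather than $\frac{1}{kn}$. Both summands in the bracket are nonnegative: the first, $\alpha\sum_{i\in S}d_{i}\bigl((\cdots)^{1/2}-1\bigr)^{2}$, is a nonnegatively weighted sum of squares, and the nonnegativity of the second is exactly the H\"{o}lder inequality recorded in Remark~\ref{inequality}. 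Hence the entire bracket is nonnegative, so for $k\geq 1$ the $\frac{1}{n}$-bound immediately implies the weaker $\frac{1}{kn}$-bound claimed here.

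\textbf{Case 2: $S$ is not strong independent.} Here I would pass to the direct product $\widetilde{G}:=G\times K_{k}^{k}$, exactly as in the proof of Theorem~\ref{main theorem1}. Since $\widetilde{G}$ is $k$-partite, the set $S\times\{a\}$ for any fixed $a\in[k]$ is strong independent in $\widetilde{G}$ and has cardinality $s$; moreover $|V(\widetilde{G})|=kn$, $|E(\widetilde{G})|=k!\,m$, and $d_{\widetilde{G}}((i,a))=(k-1)!\,d_{i}$. Applying Lemma~\ref{square lemma} to $\widetilde{G}$ with this strong independent set then gives a lower bound for $\rho_{\alpha}(\widetilde{G})-\frac{k\cdot k!\,m}{kn}$ expressed in terms of the scaled degrees $(k-1)!\,d_{i}$.

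\textbf{Scaling and conclusion.} The final step is to simplify the right-hand side using the homogeneity of each term under the substitution $d_{i}\mapsto(k-1)!\,d_{i}$: the ratio $\frac{s\,d_{i}^{k/(k-1)}}{\sum_{i\in S}d_{i}^{k/(k-1)}}$ inside the square is scale-invariant, while the weighted square-sum, the H\"{o}lder term $s^{1/k}\bigl(\sum_{i\in S}d_{i}^{k/(k-1)}\bigr)^{(k-1)/k}$, and the linear term $\sum_{i\in S}d_{i}$ each acquire a common factor $(k-1)!$. Combined with Theorem~\ref{direct product spectral radius} applied with $d=1$ (since $K_{k}^{k}$ is $1$-regular), which gives $\rho_{\alpha}(\widetilde{G})=(k-1)!\,\rho_{\alpha}(G)$, the left-hand side rewrites as $(k-1)!\bigl(\rho_{\alpha}(G)-\frac{km}{n}\bigr)$. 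Cancelling the common factor $(k-1)!$ from both sides produces exactly the asserted inequality. I expect the main obstacle to be the careful bookkeeping of these $(k-1)!$ factors across the three structurally different terms, together with the interplay of the exponents $\frac{k}{k-1}$ and $\frac{k-1}{k}$; everything else is a near-verbatim transcription of the Theorem~\ref{main theorem1} argument.
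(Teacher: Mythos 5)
Your proposal is correct and is essentially the paper's own argument: the paper proves Theorem~\ref{main theorem2} by citing Theorem~\ref{direct product spectral radius} and Lemma~\ref{square lemma} and repeating the two-case method of Theorem~\ref{main theorem1} (strong independent $S$ directly, otherwise pass to $\widetilde{G}=G\times K_{k}^{k}$ and use $\rho_{\alpha}(\widetilde{G})=(k-1)!\rho_{\alpha}(G)$). Your explicit verification of the $(k-1)!$ homogeneity of each term and of the nonnegativity needed to pass from the $\frac{1}{n}$ to the $\frac{1}{kn}$ prefactor in the strong independent case just fills in details the paper leaves implicit.
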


\begin{proof}
By  Theorem~\ref{direct product spectral radius} and Lemma~\ref{square lemma}, using the similar method in Theorem~\ref{main theorem1}, we obtain the desired result.
\end{proof}

Taking $S=V(G)$ in Theorem~\ref{main theorem2}, we obtain the following  result.
\begin{corollary}\label{signless Laplacian bound2}
Let $G$ be a connected $k$-uniform hypergraph on $n$ vertices with $m$ edges $(k\geq 3).$  Then
\begin{eqnarray}
\rho_{\alpha}(G)
\geq  \frac{\alpha}{kn}\sum_{i=1}^{n}d_{i}\left(\left(\frac{nd_{i}^{\frac{k}{k-1}}}{\sum_{i=1}^{n}d_{i}^{\frac{k}{k-1}}}\right)^{\frac{1}{2}}-1\right)^{2}+\left(\frac{1}{n}\sum_{i=1}^{n}d_{i}^{\frac{k}{k-1}}\right)^{\frac{k-1}{k}}.
\end{eqnarray}
\end{corollary}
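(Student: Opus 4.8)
The plan is to obtain this corollary as a pure specialization of Theorem~\ref{main theorem2}, taking $S = V(G)$ so that $s = |S| = n$ and every restricted sum $\sum_{i \in S}$ becomes the full sum $\sum_{i=1}^{n}$. Before substituting, I would record the hypergraph handshaking identity $\sum_{i=1}^{n} d_i = km$, which holds because each of the $m$ edges is a $k$-set and hence contributes exactly $k$ to the total degree sum. With $s = n$ inserted, the first term of the bound in Theorem~\ref{main theorem2} is already in the target form $\frac{\alpha}{kn}\sum_{i=1}^{n} d_i\bigl((nd_i^{k/(k-1)}/\sum_{i=1}^{n} d_i^{k/(k-1)})^{1/2}-1\bigr)^{2}$, so all the remaining work is confined to the second term.

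Next I would simplify the second term $\frac{1}{kn}\cdot k\bigl(n^{1/k}(\sum_{i=1}^{n} d_i^{k/(k-1)})^{(k-1)/k} - \sum_{i=1}^{n} d_i\bigr)$. The factor $k$ cancels against the $k$ in $kn$, and splitting the difference leaves $\frac{n^{1/k}}{n}(\sum_{i=1}^{n} d_i^{k/(k-1)})^{(k-1)/k} - \frac{1}{n}\sum_{i=1}^{n} d_i$. The only nontrivial step is the exponent bookkeeping $n^{1/k}/n = n^{1/k-1} = n^{-(k-1)/k}$, which lets me absorb the stray power of $n$ into the bracket: since both $n^{-(k-1)/k}$ and $(\sum_{i=1}^{n} d_i^{k/(k-1)})^{(k-1)/k}$ carry the common exponent $(k-1)/k$, they merge into the clean average $\bigl(\frac{1}{n}\sum_{i=1}^{n} d_i^{k/(k-1)}\bigr)^{(k-1)/k}$. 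By the handshaking identity the leftover piece is $\frac{1}{n}\sum_{i=1}^{n} d_i = \frac{km}{n}$, so the whole second term equals $\bigl(\frac{1}{n}\sum_{i=1}^{n} d_i^{k/(k-1)}\bigr)^{(k-1)/k} - \frac{km}{n}$.

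Finally I would transpose the additive constant $\frac{km}{n}$ appearing on the left-hand side of Theorem~\ref{main theorem2} to the right, where it cancels exactly against the $-\frac{km}{n}$ just produced, yielding the stated lower bound for $\rho_{\alpha}(G)$. There is no real obstacle here, as the argument is a direct specialization rather than a new estimate; the only point demanding care is the exponent manipulation $n^{1/k-1} = n^{-(k-1)/k}$ that turns the isolated power of $n$ into the $\frac{1}{n}$-weighted average of the $d_i^{k/(k-1)}$.
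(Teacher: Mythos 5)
Your proposal is correct and follows exactly the route the paper intends: the corollary is stated there as the specialization $S=V(G)$ of Theorem~\ref{main theorem2}, and your exponent bookkeeping $n^{1/k-1}=n^{-(k-1)/k}$ together with the handshaking identity $\sum_{i=1}^{n}d_i=km$ supplies precisely the algebra the paper leaves implicit. No gaps.
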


\begin{remark}
{\rm Let $G$ be a connected $k$-uniform hypergraph on $|V(G)|=n$ vertices $(k\geq 3),$
taking $\alpha=\frac{1}{2}$ in Corollary~\ref{signless Laplacian bound2}. Then we have
\begin{eqnarray}
\rho(\mathcal{Q}(G))=2\rho_{\frac{1}{2}}(G)
&&\geq \frac{1}{kn}\sum_{i=1}^{n} d_{i}\left(\left(\frac{nd_{i}^{\frac{k}{k-1}}}{\sum_{i=1}^{n}d_{i}^{\frac{k}{k-1}}}\right)^{\frac{1}{2}}-1\right)^2+2\left(\frac{1}{n}\sum_{i=1}^{n} d_{i}^{\frac{k}{k-1}} \right)^{\frac{k-1}{k}}\nonumber \\
&&\geq 2\left(\frac{1}{n}\sum_{i=1}^{n} d_{i}^{\frac{k}{k-1}} \right)^{\frac{k-1}{k}},
\end{eqnarray}
which improves the result in  Kang, Liu and Shan~\cite[Theorem~3]{Kang Liu Shan 2018}.}
\end{remark}

\begin{remark}

{\rm Nikiforov \cite{Nikiforov 2017} introduced the concept of odd-colorable hypergraphs, which is a generalization of bipartite graphs. Let $k\geq 2$ and $k$ be even. A $k$-uniform hypergraph $G$ with $V(G)=[n]$ is called {\it odd-colorable} if there exists a map $\varphi:[n]\rightarrow [k]$ such that for any edge
$\{i_{1},i_{2},\cdots,i_{k}\}$ of $G,$ we have
$$\varphi(i_{1})+\varphi(i_{2})+\cdots+\varphi(i_{k})\equiv \frac{k}{2}(mod~k).$$
It was proved that if $G$ is a connected $k$-uniform hypergraph, then $\rho(\mathcal{L}(G))=\rho(\mathcal{Q}(G))$ if and only if $k$ is even and $G$
is odd-colorable \cite{Yuan Qi Shao Ouyang 2018}.
Thus by  $\rho_{\frac{1}{2}}(G)=\frac{1}{2}\rho(\mathcal{Q}(G)),$ we have the following results.
Let $k\geq 4,$ and $k$ be even,  for a connected odd-colorable $k$-uniform hypergraph $G,$ we have
\begin{eqnarray*}
\rho(\mathcal{L}(G))=2\rho_{\frac{1}{2}}(G)
&&\geq \frac{1}{kn}\sum_{i=1}^{n} d_{i}\left(\left(\frac{nd_{i}^{\frac{k}{k-1}}}{\sum_{i=1}^{n}d_{i}^{\frac{k}{k-1}}}\right)^{\frac{1}{2}}-1\right)^2+2\left(\frac{1}{n}\sum_{i=1}^{n} d_{i}^{\frac{k}{k-1}} \right)^{\frac{k-1}{k}}\\
&&\geq 2\left(\frac{1}{n}\sum_{i=1}^{n} d_{i}^{\frac{k}{k-1}} \right)^{\frac{k-1}{k}},\\
\end{eqnarray*}
which improves the result in Kang, Liu and Shan~\cite[Remark~12]{Kang Liu Shan 2018}.}
\end{remark}

\begin{lemma}\label{k square lemma}
Let $G$ be a 
$k$-uniform hypergraph on $n$ vertices with $m$ edges $(k\geq 3),$ and $S$ be a strong independent set of $G.$  Then
\begin{eqnarray*}
&&\rho_{\alpha}(G)- \frac{km}{n} \\
&\geq& \frac{\alpha}{nk^{k-1}}\sum_{i\in S}d_{i}\left(\left(\sqrt[k]{\frac{sd_{i}^{\frac{k}{k-1}}}{\sum_{i\in S}d_{i}^{\frac{k}{k-1}}}}+k-1\right)^{k}-k^{k}\right)\\
&&+\frac{(1-\alpha)k}{n}\left(s^{\frac{1}{k}}\left(\sum_{i\in S}d_{i}^{\frac{k}{k-1}}\right)^{\frac{k-1}{k}}-\sum_{i\in S}d_{i}\right).
\end{eqnarray*}
\end{lemma}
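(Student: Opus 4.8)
The plan is to reuse the test vector and the edge-splitting of Lemma~\ref{main lemma1}, but to bound the diagonal ($\alpha$) part by Jensen's inequality (Lemma~\ref{Jensen's inequality}) rather than by the inequalities used in Lemmas~\ref{main lemma1} and \ref{square lemma}. First I would set $x_i=a_i/\sqrt[k]{n}$ for $i\in S$ and $x_i=1/\sqrt[k]{n}$ for $i\notin S$, with the normalization $\sum_{i\in S}a_i^k=s$ so that $\|x\|_k=1$. By Lemma~\ref{spectral radius}, $\rho_\alpha(G)-\frac{km}{n}\geq \alpha\sum_{i\in V(G)}d_ix_i^k+(1-\alpha)k\sum_{e\in E(G)}x^e-\frac{km}{n}$, and I would rewrite the first sum edgewise as $\sum_{i\in V(G)}d_ix_i^k=\sum_{e\in E(G)}\sum_{i\in e}x_i^k$.

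The key new step is to apply Lemma~\ref{Jensen's inequality} with $n=k$ on each edge, giving $\sum_{i\in e}x_i^k\geq\frac{1}{k^{k-1}}\bigl(\sum_{i\in e}x_i\bigr)^k$. Since $S$ is strong independent, every edge meets $S$ in at most one vertex, so an edge through a single $v\in S$ has $\sum_{i\in e}x_i=\frac{a_v+k-1}{\sqrt[k]{n}}$ and hence contributes at least $\frac{(a_v+k-1)^k}{k^{k-1}n}$, while an edge disjoint from $S$ contributes exactly $\frac{k}{n}$. Counting each $v\in S$ with multiplicity $d_v$ and the remaining $m-\sum_{v\in S}d_v$ edges once, I obtain $\alpha\sum_{i\in V(G)}d_ix_i^k\geq\frac{\alpha}{k^{k-1}n}\sum_{v\in S}d_v(a_v+k-1)^k+\frac{\alpha k}{n}\bigl(m-\sum_{v\in S}d_v\bigr)$. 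The off-diagonal part is handled exactly as in Lemma~\ref{main lemma1}: $|e\cap S|\leq1$ gives $(1-\alpha)k\sum_{e}x^e=\frac{(1-\alpha)k}{n}\bigl(\sum_{v\in S}d_va_v+m-\sum_{v\in S}d_v\bigr)$.

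Next I would add the two estimates and subtract $\frac{km}{n}$; the $m$-terms cancel because $\alpha+(1-\alpha)=1$, leaving $\rho_\alpha(G)-\frac{km}{n}\geq\frac{\alpha}{k^{k-1}n}\sum_{v\in S}d_v(a_v+k-1)^k+\frac{(1-\alpha)k}{n}\sum_{v\in S}d_va_v-\frac{k}{n}\sum_{v\in S}d_v$. As in Lemma~\ref{main lemma1}, I would then choose the $a_i$ forcing equality in the Hölder bound $\sum_{v\in S}d_va_v\leq\bigl(\sum_{i\in S}a_i^k\bigr)^{1/k}\bigl(\sum_{i\in S}d_i^{k/(k-1)}\bigr)^{(k-1)/k}$, namely $a_i=c\,d_i^{1/(k-1)}$ with $c=\bigl(s/\sum_{i\in S}d_i^{k/(k-1)}\bigr)^{1/k}$; this gives $a_i=\sqrt[k]{sd_i^{k/(k-1)}/\sum_{i\in S}d_i^{k/(k-1)}}$ and $\sum_{v\in S}d_va_v=s^{1/k}\bigl(\sum_{i\in S}d_i^{k/(k-1)}\bigr)^{(k-1)/k}$. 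Finally, splitting $-\frac{k}{n}\sum_{v\in S}d_v=-\frac{\alpha k}{n}\sum_{v\in S}d_v-\frac{(1-\alpha)k}{n}\sum_{v\in S}d_v$ and using $\frac{\alpha}{k^{k-1}n}k^k=\frac{\alpha k}{n}$ to fold the first piece into the $\alpha$-sum produces the factor $(a_i+k-1)^k-k^k$, yielding exactly the claimed bound.

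I expect the conceptual content to be light, since the argument parallels Lemmas~\ref{main lemma1} and \ref{square lemma}; the part requiring the most care is matching the edgewise Jensen estimate to the strong-independence count, so that edges incident to $S$ are tallied exactly by $\sum_{v\in S}d_v$ with no double counting, and then verifying that the Hölder-optimal vector (chosen only to linearize the $(1-\alpha)$-part) is simply substituted, unchanged, into the Jensen bound for the $\alpha$-part, with the constant $k^k$ cancelling as described. No independent optimization of the $\alpha$-part is needed: the displayed inequality is merely its value at this vector.
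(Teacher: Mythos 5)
Your proposal is correct and follows essentially the same route as the paper: the same test vector, the same edgewise rewriting of $\sum_i d_i x_i^k$, the same application of Jensen's inequality on each edge, the same strong-independence edge count, and the same Hölder-optimal choice $a_i = \bigl(s d_i^{k/(k-1)}/\sum_{j\in S} d_j^{k/(k-1)}\bigr)^{1/k}$. The only difference is cosmetic: you make the cancellation of the $m$-terms and the folding of $-\frac{\alpha k}{n}\sum_{v\in S} d_v$ into the factor $(a_i+k-1)^k-k^k$ fully explicit, whereas the paper delegates that bookkeeping to ``similar arguments in Lemma~\ref{main lemma1}.''
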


\begin{proof}
Let $s=|S|$ denote the number of vertices in $S.$ Taking $x_{i}=\frac{a_{i}}{\sqrt[k]{n}}$ for $i \in S$ and $x_{i}=\frac{1}{\sqrt[k]{n}}$
when $i\notin S.$ We have $\parallel x\parallel_{k}=1$ when $\sum_{i\in S}a_{i}^{k}=s.$ Since  $S$ is a strong independent set, thus we have $|e\cap S|\leq 1$ for each edge $e\in E(G).$ Then we have
\begin{eqnarray*}
&&\rho_{\alpha}(G)-\frac{km}{n}
\\
&=& \alpha \sum_{i\in V(G)}d_{i}x_{i}^{k}+(1-\alpha)k \sum_{e\in E(G)} x^{e}-\frac{km}{n}
\\
&=& \alpha \sum_{\{i_{1},i_{2},\cdots,i_{k}\}\in E(G)}(x_{i_{1}}^{k}+x_{i_{2}}^{k}+\cdots+x_{i_{k}}^{k})+ (1-\alpha)k \sum_{\{i_{1},i_{2},\cdots,i_{k}\}\in E(G)}x_{i_{1}}x_{i_{2}}\cdots x_{i_{k}}-\frac{km}{n}.
\end{eqnarray*}
By Jensen's inequality, 
we have
\begin{eqnarray*}
&&\rho_{\alpha}(G)-\frac{km}{n}
\\
&\geq& \alpha \sum_{\{i_{1},i_{2},\cdots,i_{k}\}\in E(G)}(x_{i_{1}}^{k}+x_{i_{2}}^{k}+\cdots+x_{i_{k}}^{k})+ (1-\alpha)k \sum_{\{i_{1},i_{2},\cdots,i_{k}\}\in E(G)}x_{i_{1}}x_{i_{2}}\cdots x_{i_{k}}-\frac{km}{n}
\\
&\geq& \alpha \sum_{\{i_{1},i_{2},\cdots,i_{k}\}\in E(G)}\frac{(x_{i_{1}}+x_{i_{2}}+\cdots+x_{i_{k}})^{k}}{k^{k-1}}+ (1-\alpha)k \sum_{\{i_{1},i_{2},\cdots,i_{k}\}\in E(G)}x_{i_{1}}x_{i_{2}}\cdots x_{i_{k}}-\frac{km}{n}
\\
&= & \frac{\alpha}{k^{k-1}} \sum_{\{i_{1},i_{2},\cdots,i_{k}\}\in E(G)}(x_{i_{1}}+x_{i_{2}}+\cdots+x_{i_{k}})^{k}\\
&&+ (1-\alpha)k \sum_{\{i_{1},i_{2},\cdots,i_{k}\}\in E(G)}x_{i_{1}}x_{i_{2}}\cdots x_{i_{k}}-\frac{km}{n}
\\
&= & \frac{\alpha}{nk^{k-1}} \sum_{i \in S}d_{i}\left((a_{i}+k-1)^{k}-k^{k}\right)+ \frac{(1-\alpha)k}{n} \left(\sum_{i \in S} a_{i}d_{i}-\sum_{i \in S}d_{i}\right).
\end{eqnarray*}
Again by the similar arguments in Lemma~\ref{main lemma1}, we obtain the desired result.
\end{proof}

\begin{theorem}\label{main theorem3}
Let $G$ be a connected $k$-uniform hypergraph on $n$ vertices with $m$ edges $(k\geq 3),$ and $S$ be a  subset with $|S|=s.$ Then
\begin{eqnarray*}
&&\rho_{\alpha}(G)- \frac{km}{n} \\
&\geq& \frac{\alpha}{k^{k}n}\sum_{i\in S}d_{i}\left(\left(\sqrt[k]{\frac{sd_{i}^{\frac{k}{k-1}}}{\sum_{i\in S}d_{i}^{\frac{k}{k-1}}}}+k-1\right)^{k}-k^{k}\right)
+\frac{(1-\alpha)}{n}\left(s^{\frac{1}{k}}\left(\sum_{i\in S}d_{i}^{\frac{k}{k-1}}\right)^{\frac{k-1}{k}}-\sum_{i\in S}d_{i}\right).
\end{eqnarray*}

\end{theorem}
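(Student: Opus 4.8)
The plan is to follow verbatim the two-step reduction used for Theorem~\ref{main theorem1} and Theorem~\ref{main theorem2}: Lemma~\ref{k square lemma} supplies the estimate when the relevant vertex set is a strong independent set, and the direct product $\widetilde{G}:=G\times K_{k}^{k}$ turns an \emph{arbitrary} subset $S$ into a strong independent set of a larger hypergraph whose $\mathcal{A}_{\alpha}$-spectral radius is a fixed multiple of $\rho_{\alpha}(G)$.

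First I would form $\widetilde{G}=G\times K_{k}^{k}$. As recorded in the proof of Theorem~\ref{main theorem1}, $\widetilde{G}$ is $k$-partite with $|V(\widetilde{G})|=kn$, $|E(\widetilde{G})|=k!\,m$ and $d_{\widetilde{G}}((i,a))=(k-1)!\,d_{i}$; consequently the slice $\widetilde{S}:=S\times\{a\}$ has $|\widetilde{S}|=s$ and is automatically strong independent in $\widetilde{G}$, whether or not $S$ was strong independent in $G$. Applying Lemma~\ref{k square lemma} to the pair $(\widetilde{G},\widetilde{S})$ produces a lower bound for $\rho_{\alpha}(\widetilde{G})-\tfrac{k\cdot k!\,m}{kn}$ whose right-hand side is assembled from the scaled degrees $\widetilde{d}_{i}=(k-1)!\,d_{i}$.

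The decisive observation is that this right-hand side is \emph{homogeneous of degree one} in the degree sequence. Indeed, the ratio $\widetilde{d}_{i}^{\,k/(k-1)}/\sum_{j}\widetilde{d}_{j}^{\,k/(k-1)}$ is homogeneous of degree zero, so the inner expression $\big(\sqrt[k]{s\,\widetilde{d}_{i}^{\,k/(k-1)}/\sum_{j}\widetilde{d}_{j}^{\,k/(k-1)}}+k-1\big)^{k}-k^{k}$ is unchanged under the scaling; the prefactor $\sum_{i}\widetilde{d}_{i}$ contributes degree one, and in the second summand the exponents combine as $\tfrac{k}{k-1}\cdot\tfrac{k-1}{k}=1$, so both $s^{1/k}\big(\sum_{j}\widetilde{d}_{j}^{\,k/(k-1)}\big)^{(k-1)/k}$ and $\sum_{i}\widetilde{d}_{i}$ are again degree one. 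Hence replacing $d_{i}$ by $(k-1)!\,d_{i}$ multiplies the entire right-hand side by exactly $(k-1)!$, while the only non-homogeneous change, $n\mapsto\widetilde{n}=kn$, inserts a single extra factor $\tfrac1k$ that turns the factor $k^{k-1}$ in the denominator of Lemma~\ref{k square lemma} into $k^{k}$ and collapses the coefficient $\tfrac{(1-\alpha)k}{\widetilde{n}}$ to $\tfrac{1-\alpha}{n}$.

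Finally, since $K_{k}^{k}$ is $1$-regular and $G$ is connected, Lemma~\ref{A connected} makes $\widetilde{G}$ connected and Theorem~\ref{direct product spectral radius} gives $\rho_{\alpha}(\widetilde{G})=(k-1)!\,\rho_{\alpha}(G)$, so the left-hand side equals $(k-1)!\big(\rho_{\alpha}(G)-\tfrac{km}{n}\big)$. Dividing through by $(k-1)!$ then yields the claimed inequality. I expect the only delicate step to be this constant-chasing—in particular confirming the degree-one homogeneity of the bound so that the factor $(k-1)!$ cancels cleanly against $\rho_{\alpha}(\widetilde{G})=(k-1)!\,\rho_{\alpha}(G)$; once that matching is verified the computation is routine and structurally identical to Theorem~\ref{main theorem1}.
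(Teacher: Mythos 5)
Your proposal is correct and follows essentially the same route as the paper, whose proof of Theorem~\ref{main theorem3} is exactly the combination of Lemma~\ref{k square lemma} with the direct-product reduction via $\widetilde{G}=G\times K_{k}^{k}$ and Theorem~\ref{direct product spectral radius}, as in Theorem~\ref{main theorem1}. Your constant-chasing (the degree-one homogeneity turning $(k-1)!$ into a common factor, and $\widetilde{n}=kn$ converting $k^{k-1}$ to $k^{k}$ and $\tfrac{(1-\alpha)k}{\widetilde{n}}$ to $\tfrac{1-\alpha}{n}$) checks out.
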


\begin{proof}
By  Theorem~\ref{direct product spectral radius} and Lemma~\ref{k square lemma}, using the similar method in Theorem~\ref{main theorem1}, we obtain the desired result.
\end{proof}

\begin{remark}
{\rm For a connected $k$-uniform hypergraph $G$ on $n$ vertices with $m$ edges ($k\geq 3$), let $S$ be a subset of $G,$ we have
$$\sum_{i\in S}d_{i}\left(\sqrt[k]{\frac{sd_{i}^{\frac{k}{k-1}}}{\sum_{i\in S}d_{i}^{\frac{k}{k-1}}}}+k-1\right)^{k}\geq \sum_{i\in S}d_{i}\sum_{i\in S}\left(\sqrt[k]{\frac{sd_{i}^{\frac{k}{k-1}}}{\sum_{i\in S}d_{i}^{\frac{k}{k-1}}}}+k-1\right)^{k}\geq k^{k}\sum_{i\in S}d_{i},$$
the first inequality follows from the Rearrangement inequality,
At the same time, considering
$\sum_{i\in S}\left(\sqrt[k]{\frac{sd_{i}^{\frac{k}{k-1}}}{\sum_{i\in S}d_{i}^{\frac{k}{k-1}}}}+k-1\right)^{k},$
we notice that there exists an $ i_{0}\in S$ satisfying that $\sqrt[k]{\frac{sd_{i_{0}}^{\frac{k}{k-1}}}{\sum_{i\in S}d_{i}^{\frac{k}{k-1}}}}\geq 1,$
thus the second inequality holds.

Furthermore,  it follows from the H\"{o}lder inequality, we have
$$s^{\frac{1}{k}}\left(\sum_{i\in S}d_{i}^{\frac{k}{k-1}}\right)^{\frac{k-1}{k}}\geq \sum_{i\in S}d_{i},$$
equality holds if and only if $d_{i}$ is a constant  for any $i\in S.$
Thus we have
\begin{eqnarray*}
&&\rho_{\alpha}(G) \\
&\geq& \frac{\alpha}{k^{k}n}\sum_{i\in S}d_{i}\left(\left(\sqrt[k]{\frac{sd_{i}^{\frac{k}{k-1}}}{\sum_{i\in S}d_{i}^{\frac{k}{k-1}}}}+k-1\right)^{k}-k^{k}\right)\\
&&+\frac{(1-\alpha)}{n}\left(s^{\frac{1}{k}}\left(\sum_{i\in S}d_{i}^{\frac{k}{k-1}}\right)^{\frac{k-1}{k}}-\sum_{i\in S}d_{i}\right)+\frac{km}{n}
\\
&\geq & \frac{km}{n},
\end{eqnarray*}
which improves the corresponding result in Lemma~\ref{average degree}, and generalizes the result in \cite[Theorem~3.8]{Cooper Dutle}.}
\end{remark}

Taking $S=V(G)$ in Theorem~\ref{main theorem3}, we obtain the following  result.
\begin{corollary}\label{signless Laplacian bound3}
Let $G$ be a connected $k$-uniform hypergraph on $n$ vertices with $m$ edges $(k\geq 3).$  Then
\begin{eqnarray}
\rho_{\alpha}(G)
\geq \frac{\alpha}{k^{k}n}\sum_{i=1}^{n}d_{i}\left(\sqrt[k]{\frac{nd_{i}^{\frac{k}{k-1}}}{\sum_{i=1}^{n}d_{i}^{\frac{k}{k-1}}}}+k-1\right)^{k}
+(1-\alpha)\left(\frac{1}{n}\sum_{i=1}^{n} d_{i}^{\frac{k}{k-1}}\right)^{\frac{k-1}{k}}.
\end{eqnarray}
\end{corollary}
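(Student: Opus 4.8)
The plan is to specialize Theorem~\ref{main theorem3} to the case $S = V(G)$, for which $s = |S| = n$, and then to simplify the two resulting summands. Since $V(G)$ is trivially a subset of $V(G)$, Theorem~\ref{main theorem3} applies verbatim with every $\sum_{i\in S}$ read as $\sum_{i=1}^{n}$ and $s$ replaced by $n$ throughout. After this substitution the claimed inequality differs from the conclusion of Theorem~\ref{main theorem3} only by the location of the term $\frac{km}{n}$ and by two cosmetic rewritings, so the whole argument is a matter of algebraic bookkeeping.

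First I would handle the second (the $(1-\alpha)$) summand. After substitution it reads $\frac{1-\alpha}{n}\bigl(n^{1/k}\bigl(\sum_{i=1}^{n}d_i^{\frac{k}{k-1}}\bigr)^{\frac{k-1}{k}}-\sum_{i=1}^{n}d_i\bigr)$. Using $\frac{n^{1/k}}{n}=n^{-\frac{k-1}{k}}$, its first piece becomes $(1-\alpha)\bigl(\frac{1}{n}\sum_{i=1}^{n}d_i^{\frac{k}{k-1}}\bigr)^{\frac{k-1}{k}}$, which is exactly the desired averaged term appearing in the corollary.

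Next comes the decisive cancellation. Expanding the $-k^{k}$ inside the parentheses of the $\alpha$-summand contributes $-\frac{\alpha}{k^{k}n}\sum_{i=1}^{n}d_i\cdot k^{k}=-\frac{\alpha}{n}\sum_{i=1}^{n}d_i$, while the leftover of the $(1-\alpha)$-summand contributes $-\frac{1-\alpha}{n}\sum_{i=1}^{n}d_i$; adding these gives $-\frac{1}{n}\sum_{i=1}^{n}d_i$. Here I would invoke the handshake identity for $k$-uniform hypergraphs, namely $\sum_{i=1}^{n}d_i=km$ (each of the $m$ edges is counted once in the degree of each of its $k$ incident vertices), so that $-\frac{1}{n}\sum_{i=1}^{n}d_i=-\frac{km}{n}$. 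This cancels exactly against the $-\frac{km}{n}$ already sitting on the left-hand side of Theorem~\ref{main theorem3}.

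Finally, adding $\frac{km}{n}$ to both sides of the specialized inequality and collecting the surviving terms yields precisely the stated bound. I do not expect any genuine obstacle: the proof is pure substitution and simplification, and the only point requiring attention is recognizing that the $-k^{k}$ term in the $\alpha$-summand is exactly what the handshake identity $\sum_{i=1}^{n}d_i=km$ needs in order to absorb the two leftover degree-sum contributions into the $-\frac{km}{n}$ that then cancels. This is also why the proof can be dispatched in one line, as the authors presumably do.
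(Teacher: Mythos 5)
Your proposal is correct and matches the paper's (implicit) derivation exactly: the paper simply states that the corollary follows by taking $S=V(G)$ in Theorem~\ref{main theorem3}, and your substitution $s=n$, the rewriting $\frac{n^{1/k}}{n}=n^{-\frac{k-1}{k}}$, and the cancellation of $-\frac{1}{n}\sum_{i=1}^{n}d_i=-\frac{km}{n}$ via the handshake identity are precisely the bookkeeping that makes that one-line claim work.
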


\begin{remark}
{\rm Let $G$ be a connected  $k$-uniform hypergraph on $|V(G)|=n$ vertices and $|E(G)|=m$ edges ($k\geq 3$), and $S$ be a subset of $V(G),$
notice that the lower bound of $\rho_{\alpha}(G)-\frac{km}{n}$ in Theorem~\ref{main theorem1} is better than the  corresponding lower bound in  Theorems~\ref{main theorem2} and ~\ref{main theorem3}.}
\end{remark}

\begin{remark}
{\rm Let $G$ be a $k$-uniform hypergraph on $n$ vertices and $m$ edges ($k\geq 3$). Notice that taking $S$ be  a {\it transversal}~\cite{Henning Yeo 2020} (also called {\it vertex cover } or {\it hitting set} ), or a {\it dominating set}~\cite{Bujtas Henning Tuza 2012} in Theorems~\ref{main theorem1}, \ref{main theorem2} and \ref{main theorem3}, we deduce the corresponding results, respectively,  since
$S$ is an arbitrary subset of vertex set  $V(G)$ in these three above theorems.
Moreover, taking $S$ be a maximum strong independent set, a maximum weak independent set, a minimum vertex cut in Theorems~\ref{main theorem2} and \ref{main theorem3}, we also deduce the corresponding results, respectively.}
\end{remark}


\begin{theorem}\label{second maximum minimum theorem}
Suppose that $G$ is a $k$-uniform hypergraph with $n$ vertices and $m$ edges. Let $\Delta$ and $\delta$ be the maximum and
minimum degrees of $G,$ respectively. Then
\begin{eqnarray}
&&\rho_{\alpha}(G)-\frac{km}{n}\nonumber\\
&\geq& \frac{\alpha}{2n}\left(\frac{2\left(\Delta^{\frac{2k-1}{k-1}}+
\delta^{\frac{2k-1}{k-1}}\right)}{\Delta^{\frac{k}{k-1}}+
\delta^{\frac{k}{k-1}}}-(\Delta+\delta)\right)+\frac{(1-\alpha)k}{2n}\left(2^{\frac{1}{k}}\left(\Delta^{\frac{k}{k-1}}+\delta^{\frac{k}{k-1}}\right)^{\frac{k-1}{k}}-(\Delta+\delta
)\right).\nonumber
\end{eqnarray}

\end{theorem}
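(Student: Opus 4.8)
The plan is to obtain the bound by evaluating the variational quantity of Lemma~\ref{spectral radius} on a single test vector that is perturbed only at one vertex of maximum degree and one vertex of minimum degree, and then to show that the sole harmful contribution---caused by edges containing both of these vertices---costs at most a factor $\tfrac12$.

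First I would fix $u,w\in V(G)$ with $d_u=\Delta$ and $d_w=\delta$. If $\Delta=\delta$ then $G$ is regular, both brackets in the statement vanish, and the claim is just Lemma~\ref{average degree}; so I may assume $\Delta>\delta$, whence $u\neq w$. Imitating the proof of Lemma~\ref{main lemma1} with $S=\{u,w\}$, put $x_u=a/\sqrt[k]{n}$, $x_w=b/\sqrt[k]{n}$ and $x_i=1/\sqrt[k]{n}$ otherwise, with the H\"older-optimal values $a=c\Delta^{\frac{1}{k-1}}$, $b=c\delta^{\frac{1}{k-1}}$, $c=\bigl(2/(\Delta^{\frac{k}{k-1}}+\delta^{\frac{k}{k-1}})\bigr)^{1/k}$, so that $a^k+b^k=2$, hence $\|x\|_k=1$, and $a\ge1\ge b\ge0$. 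Letting $t$ be the number of edges through both $u$ and $w$ and splitting $\sum_{e}x^{e}$ according to how $e$ meets $\{u,w\}$, a direct computation gives
\begin{eqnarray*}
x^{T}(\mathcal{A}_{\alpha}(G)x)-\frac{km}{n}
&=&\frac{\alpha}{n}\bigl(\Delta(a^{k}-1)+\delta(b^{k}-1)\bigr)\\
&&+\frac{(1-\alpha)k}{n}\bigl(\Delta(a-1)+\delta(b-1)+t(a-1)(b-1)\bigr).
\end{eqnarray*}
Denote by $B$ the expression obtained here by deleting the $t$-term; this is exactly the right-hand side of Lemma~\ref{main lemma1} for $S=\{u,w\}$, and by the Rearrangement and H\"older inequalities used in Remark~\ref{inequality} one has $B=\tfrac1n(\alpha\,B_{1}+(1-\alpha)k\,B_{2})\ge0$, where $B_{1},B_{2}\ge0$ are the two brackets in the statement and $B_{2}=\Delta(a-1)+\delta(b-1)$. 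The target right-hand side is precisely $\tfrac12 B$.

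Next I would dispose of $t$. Every edge through $w$ meets $u$ at most once, so $t\le\min(\Delta,\delta)=\delta$; since $a\ge1\ge b$ gives $(a-1)(b-1)\le0$, the computed expression is nonincreasing in $t$, and therefore, uniformly in $t$,
$$\rho_{\alpha}(G)-\frac{km}{n}\ \ge\ x^{T}(\mathcal{A}_{\alpha}(G)x)-\frac{km}{n}\ \ge\ B+\frac{(1-\alpha)k}{n}\,\delta(a-1)(b-1).$$
It remains to check that the last quantity is at least $\tfrac12B$. Discarding the nonnegative term $\tfrac{\alpha}{2n}B_{1}$ from $\tfrac12B$ and cancelling the positive factor $\tfrac{(1-\alpha)k}{n}$ (permissible as $\alpha<1$), this collapses to the single, $\alpha$-free inequality
$$\Delta(a-1)-\delta(1-b)\ \ge\ 2\delta(a-1)(1-b),$$
i.e. $\Delta(a-1)\ge\delta(1-b)(2a-1)$, with $a,b$ the explicit functions of $\Delta,\delta$ above.

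I expect this elementary inequality to be the crux. Assuming $\delta\ge1$ (the case $\delta=0$ forces $b=0$ and makes it trivial) and writing $r=a/b=(\Delta/\delta)^{1/(k-1)}\ge1$, the factor $\delta$ cancels and $a=rb$, $b=(2/(r^{k}+1))^{1/k}$ depend on $r$ alone, so the inequality becomes a one-variable statement $r^{k-1}(a-1)\ge(1-b)(2a-1)$ in $r\ge1$ with equality at $r=1$. The difficulty is that it is tight in the near-regular limit: a second-order expansion at $r=1$ shows that both sides vanish and that the left side dominates only at order $(r-1)^2$, with limiting ratio $k-1\ (\ge1)$, so the margin is smallest there; for $r$ large one has $b\to0$, $a\to2^{1/k}$ and the left side is unbounded while the right side stays bounded, so the inequality is safe. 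I would therefore prove it by examining the sign of the difference through its derivative at $r=1$ and monotonicity for $r>1$. Once this one-variable inequality is secured the theorem follows, and I note that neither connectivity nor $k\ge3$ is needed, since only the variational bound $\rho_{\alpha}(G)\ge x^{T}(\mathcal{A}_{\alpha}(G)x)$ from Lemma~\ref{spectral radius} is invoked.
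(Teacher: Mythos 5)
Your computation of $x^{T}(\mathcal{A}_{\alpha}(G)x)-\frac{km}{n}$ with the perturbed test vector, including the cross-term $t(a-1)(b-1)$ and the bound $t\le\delta$, is correct, and your route is genuinely different from the paper's: the paper never fights the cross-term at all. Its Case~1 handles nonadjacent extremal vertices exactly as you do (with denominator $n$, i.e.\ a bound twice as strong), and in Case~2 it passes to the disjoint union $G^{*}=G\cup G'$ of two copies of $G$, where a maximum-degree vertex of $G$ and a minimum-degree vertex of $G'$ are automatically nonadjacent; since $G^{*}$ has $2n$ vertices, $2m$ edges, the same average degree, and $\rho_{\alpha}(G^{*})=\rho_{\alpha}(G)$, the factor $\tfrac12$ falls out of the doubled vertex count rather than from absorbing a loss. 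That trick is what lets the paper's proof work uniformly for all $k\ge 2$ with no analysis of any auxiliary inequality.

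Your version, by contrast, stands or falls on the one-variable inequality $r^{k-1}(a-1)\ge(1-b)(2a-1)$, and this is where there is a genuine gap: you do not prove it, and the asymptotic justification you offer for it is wrong. Writing $r=1+\epsilon$, both sides equal $\tfrac{\epsilon}{2}+O(\epsilon^{2})$, so the limiting ratio is $1$, not $k-1$; the actual difference of the two sides is $\tfrac{k-3}{4}\epsilon^{2}+O(\epsilon^{3})$. Hence for $k=2$ the inequality is false near $r=1$ (e.g.\ $\Delta=11$, $\delta=10$ gives left side $\approx 0.5107$ versus right side $\approx 0.5321$), so your concluding remark that ``$k\ge 3$ is not needed'' is unsupported by your own argument even though the theorem itself has no such restriction; and for $k=3$ the second-order terms cancel exactly, so the margin is third order and a ``derivative at $r=1$ plus monotonicity'' argument as sketched cannot close the case -- numerically the inequality does appear to hold for $k\ge 3$, but establishing it requires a genuinely careful global analysis that is absent. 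Until that inequality is proved (for the relevant range of $k$), the proof is incomplete; alternatively, adopting the paper's disjoint-duplication step makes the whole difficulty disappear.
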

\begin{proof}
Let $i_{0}$ and $j_{0}$ be the vertices of $G$ with $d_{i_{0}}=\Delta$  and $d_{j_{0}}=\delta.$ We distinguish the following two cases.\\
$\mathbf{Case~1}.$ $i_{0}$ and $j_{0}$ are not adjacent in $G.$

We first define a vector $x\in \mathbb{R}^{n}$ by
\begin{align*}
\begin{split}
 x_{i}=\left \{
\begin{array}{ll}
    \frac{a_{1}}{\sqrt[k]{n}},                    & i=i_{0},\\
    \frac{a_{2}}{\sqrt[k]{n}},                   & i=j_{0},\\
    \frac{1}{\sqrt[k]{n}},                & otherwise,
\end{array}
\right.
\end{split}
\end{align*}
where
$$a_{1}=\frac{\sqrt[k]{2}\Delta^{\frac{1}{k-1}}}{\sqrt[k]{\Delta^{\frac{k}{k-1}}+\delta^{\frac{k}{k-1}}}},
a_{2}=\frac{\sqrt[k]{2}\delta^{\frac{1}{k-1}}}{\sqrt[k]{\Delta^{\frac{k}{k-1}}+\delta^{\frac{k}{k-1}}}}.$$
It can be checked that $\|x\|_{k}=1.$ By Lemma~\ref{spectral radius}, we have
\begin{eqnarray*}
&&\rho_{\alpha}(G)-\frac{km}{n}
\\
&\geq &  x^T (\mathcal{A}_{\alpha}(G)x)-\frac{km}{n}
\\
&=& \alpha \sum_{i\in V(G)}d_{i}x_{i}^{k}+(1-\alpha)k \sum_{e\in E(G)} x^{e}-\frac{km}{n}
\\
&=& \frac{\alpha}{n}\left(\frac{2\left(\Delta^{\frac{2k-1}{k-1}}+
\delta^{\frac{2k-1}{k-1}}\right)}{\Delta^{\frac{k}{k-1}}+
\delta^{\frac{k}{k-1}}}-(\Delta+\delta)+km\right)\\
&&+\frac{(1-\alpha)k}{n}\left(2^{\frac{1}{k}}\left(\Delta^{\frac{k}{k-1}}+\delta^{\frac{k}{k-1}}\right)^{\frac{k-1}{k}}-(\Delta+\delta)+m\right)-\frac{km}{n}
\\
&\geq&\frac{\alpha}{n}\left(\frac{2\left(\Delta^{\frac{2k-1}{k-1}}+
\delta^{\frac{2k-1}{k-1}}\right)}{\Delta^{\frac{k}{k-1}}+
\delta^{\frac{k}{k-1}}}-(\Delta+\delta)\right)+\frac{(1-\alpha)k}{n}\left(2^{\frac{1}{k}}\left(\Delta^{\frac{k}{k-1}}+\delta^{\frac{k}{k-1}}\right)^{\frac{k-1}{k}}-(\Delta+\delta
)\right).
\end{eqnarray*}
$\mathbf{Case~2}.$ $i_{0}$ and $j_{0}$ are adjacent in $G.$

Let $G'$ be a copy of $G$ and $G^{*}=G\cup G'.$ Let $i_{0}'$
and $j_{0}'$ be the corresponding vertices of $i_{0}$ and $j_{0}$  in $G',$ respectively. Clearly, $i_{0}$ and $j_{0}'$
are not adjacent in $G^{*},$ and $d_{G^{*}}(i_{0})=\Delta,$ $d_{G^{*}}(j_{0}')=\delta.$ Using the same arguments of Case~1 for $G^{*},$ we have
\begin{eqnarray*}
&&\rho_{\alpha}(G)-\frac{km}{n}
\\
&=&\rho_{\alpha}(G^{*})-\frac{km}{n}
\\
&\geq& \frac{\alpha}{2n}\left(\frac{2\left(\Delta^{\frac{2k-1}{k-1}}+
\delta^{\frac{2k-1}{k-1}}\right)}{\Delta^{\frac{k}{k-1}}+
\delta^{\frac{k}{k-1}}}-(\Delta+\delta)\right)+\frac{(1-\alpha)k}{2n}\left(2^{\frac{1}{k}}\left(\Delta^{\frac{k}{k-1}}+\delta^{\frac{k}{k-1}}\right)^{\frac{k-1}{k}}-(\Delta+\delta
)\right).
\end{eqnarray*}
We complete the proof.
\end{proof}

\begin{remark}
{\rm Let $G$ be a $k$-uniform hypergraph on $n$ vertices and $m$ edges.
Similar to  Remark~\ref{inequality}, it follows from the  Rearrangement inequality and the H\"{o}lder inequality that
\begin{eqnarray*}
&&\rho_{\alpha}(G)-\frac{km}{n}
\\
&\geq & \frac{\alpha}{2n}\left(\frac{2\left(\Delta^{\frac{2k-1}{k-1}}+
\delta^{\frac{2k-1}{k-1}}\right)}{\Delta^{\frac{k}{k-1}}+
\delta^{\frac{k}{k-1}}}-(\Delta+\delta)\right)+\frac{(1-\alpha)k}{2n}\left(2^{\frac{1}{k}}\left(\Delta^{\frac{k}{k-1}}+\delta^{\frac{k}{k-1}}\right)^{\frac{k-1}{k}}-(\Delta+\delta
)\right)
\\
&\geq& 0,
\end{eqnarray*}
which improves the result in Lemma~\ref{average degree} and generalizes the result of Si and Yuan \cite[Theorem~3.2]{Si Yuan 2017}.

Furthermore, for a connected $k$-uniform hypergraph $G$ on $n$ vertices and $m$ edges ($k\geq 3$),
taking $S$  be a pair of vertices $\{i,j\}$ with $d_{i}=\Delta, d_{j}=\delta,
 i\neq j$, and $i,j\in V(G).$ When $i,j$ are not adjacent in $G,$  the first lower bound of $\rho_{\alpha}(G)$ in Corollary~\ref{first maximum and minimum degree} is better than the lower bound in Theorem~\ref{second maximum minimum theorem}. When $i,j$ are adjacent in $G,$  the lower bound of $\rho_{\alpha}(G)$ in Theorem~\ref{second maximum minimum theorem} is better than the first lower bound in  Corollary~\ref{first maximum and minimum degree}.}

\end{remark}






\frenchspacing


\begin{thebibliography}{99}

\bibitem{Balobanov Shabanov 2018}
A. E. Balobanov,  D. A. Shabanov,
On the number of independent sets in simple hypergraphs.
Mat. Zametki 103 (2018), no. 1, 38-48.


\bibitem{Balobanov Shabanov 2021}
A. E. Balobanov,  D. A. Shabanov,
On the strong chromatic number of a random 3-uniform hypergraph.
Discrete Math. 344 (2021),  112231, 16 pp.


\bibitem{Berge 1973}
C. Berge, Hypergraphs: Combinatorics of finite sets, third edition, North-Holland, Amsterdam, 1973.







\bibitem{Bujtas Henning Tuza 2012}
C. Bujt\'{a}s,  M. A. Henning, Z. Tuza,  Transversals and domination in uniform hypergraphs. European J. Combin. 33 (2012),  62-71.




\bibitem{Cioab Gregory 2007}
S. M. Cioab\'{a},  D. A. Gregory,
Large matchings from eigenvalues.
Linear Algebra Appl. 422 (2007), 308-317.


\bibitem{Collatz Sinogowitz 1957}
L. Collatz, U. Sinogowitz, Spektren endlicher Grafen.
Abh. Math. Sem. Univ. Hamburg 21 (1957), 63-77.


\bibitem{Cooper Dutle}
 J. Cooper,  A. Dutle, Spectra of uniform hypergraphs. Linear Algebra Appl. 436 (2012),  3268-3292.


\bibitem{Friedland Gaubert Han}
S. Friedland, S. Gaubert, L. Han,
Perron-Frobenius theorem for nonnegative multilinear forms and extensions.
Linear Algebra Appl. 438 (2013),  738-749.


\bibitem{Guo Zhou in press}
H. Y. Guo, B. Zhou, On the $\alpha$-spectral radius of uniform hypergraphs, Discuss. Math. Graph Theory 40 (2020),  559-575.





\bibitem{Henning Yeo 2020}
M. A. Henning,  A. Yeo,
Transversals in linear uniform hypergraphs.
Developments in Mathematics, 63. Springer, Cham, [2020].





\bibitem{Qi2013}

L. Q. Qi, Symmetric nonnegative tensors and copositive tensors.
Linear Algebra Appl. 439 (2013),  228-238.





\bibitem{Kang Liu Shan 2018}
L. Y. Kang, L. L. Liu, E. F. Shan,
Sharp lower bounds for the spectral radius of uniform hypergraphs concerning degrees.
Electron. J. Combin. 25 (2018), Paper No. 2.1, 13 pp.

\bibitem{Li Cooper Chang 2017}
W. Li, J. Cooper, A. Chang,
Analytic connectivity of k-uniform hypergraphs.
Linear Multilinear Algebra 65 (2017), 1247-1259.


\bibitem{Lin Guo Zhou in press}
H. Y. Lin, H. Y. Guo, B. Zhou, On the $\alpha$-spectral radius of irregular uniform hypergraphs,
Linear Multilinear Algebra 68 (2020), 265-277.











\bibitem{Liu Kang Bai 2019}
L. L. Liu, L. Y. Kang, S. L. Bai,
Bounds on the spectral radius of uniform hypergraphs. Discrete Appl. Math. 259 (2019), 160-169.

\bibitem{Liu Kang Shan 2018}
L. L. Liu, L. Y. Kang, E. F. Shan,
On the irregularity of uniform hypergraphs.
European J. Combin. 71 (2018), 22-32.
\bibitem{Nikiforov Merging2017}
V. Nikiforov,
Merging the A- and Q-spectral theories.
Appl. Anal. Discrete Math. 11 (2017),  81-107.



\bibitem{Nikiforov 2017}
V. Nikiforov,
Hypergraphs and hypermatrices with symmetric spectrum.
Linear Algebra Appl. 519 (2017), 1-18.



\bibitem{Ning Li Lu 2013}
W. J. Ning, H. Li, M. Lu,
On the signless Laplacian spectral radius of irregular graphs.
Linear Algebra Appl. 438 (2013),  2280-2288.

\bibitem{Qi2005}
L. Q. Qi,
Eigenvalues of a real supersymmetric tensor.
J. Symbolic Comput. 40 (2005), 1302-1324.







\bibitem{Shao 2013}
J. Y. Shao,
A general product of tensors with applications.
Linear Algebra Appl. 439 (2013),  2350-2366.


\bibitem{Si Yuan 2017}
X. L. Si,  X. Y. Yuan,
On the spectral radii and principal eigenvectors of uniform hypergraphs.
Discrete Math. Algorithms Appl. 9 (2017),  1750048, 9 pp.

\bibitem{Xie Qi 2015}
J. S. Xie, L. Q. Qi,
The clique and coclique numbers' bounds based on the $H$-eigenvalues of uniform hypergraphs.
Int. J. Numer. Anal. Model. 12 (2015),  318-327.


\bibitem{Yang Yang}
Y. N. Yang, Q. Z. Yang, Further results for Perron-Frobenius theorem for nonnegative tensors. SIAM J. Matrix Anal. Appl. 31 (2010),  2517-2530.


\bibitem{Yuan Qi Shao Ouyang 2018}
X. Y. Yuan, L. Q. Qi,  J. Y. Shao, C. Ouyang,
Some properties and applications of odd-colorable $r$-hypergraphs.
Discrete Appl. Math. 236 (2018), 446-452.













\end{thebibliography}
\end{document}